\def\Links{\tagsleft@true}\def\Rechts{\tagsleft@false}
\newcommand{\dhr}{\mathrel{\lhook\joinrel\relbar\kern-.8ex\joinrel\lhook\joinrel\rightarrow}}
\newcommand{\Wqb}{W_{q,\mathcal{B}}}
\newcommand{\Wpb}{W_{p,\mathcal{B}}}
\newcommand{\R}{\mathbb{R}}
\newcommand{\C}{\mathbb{C}}
\newcommand{\N}{\mathbb{N}}
\newcommand{\A}{\mathbb{A}}
\newcommand{\mS}{\mathbb{S}}
\newcommand{\ml}{\mathcal{L}}
\newcommand{\Om}{\Omega}
\newcommand{\ve}{\varepsilon}
\newcommand{\rd}{\mathrm{d}}
\newcommand{\bqn}{\begin{equation}}
\newcommand{\eqn}{\end{equation}}
\newcommand{\bqnn}{\begin{equation*}}
\newcommand{\eqnn}{\end{equation*}}
\newcommand{\bear}{\begin{eqnarray}}
\newcommand{\eear}{\end{eqnarray}}
\newcommand{\bean}{\begin{eqnarray*}}
\newcommand{\eean}{\end{eqnarray*}}
\newcommand{\bbWp}{\mathbb{W}_p(E_0,E_1)}
\newcommand{\bbWpp}{\mathbb{W}_p^+(E_0,E_1)}
\newcommand{\bbWppd}{\dot{\mathbb{W}}_p^+(E_0,E_1)}
\newtheorem{theorem}{Theorem}[section]
\newtheorem{corollary}[theorem]{Corollary}
\newtheorem{lemma}[theorem]{Lemma}
\newtheorem{proposition}[theorem]{Proposition}
\newtheorem{remark}[theorem]{Remark}
\numberwithin{equation}{section}
\begin{document}
%
\title{Some Results Based on Maximal Regularity Regarding Population Models with Age and Spatial Structure}

\author{Christoph Walker}
\address{Leibniz Universit\"at Hannover, Institut f\" ur Angewandte Mathematik, Welfengarten 1, D--30167 Hannover, Germany} 
\email{walker@ifam.uni-hannover.de}
\keywords{Population models, age and spatial structure, maximal regularity, bifurcation theory\\ }
\subjclass{}
%
%
\begin{abstract}
We review some results on abstract linear and nonlinear population models with age and spatial structure. The results are mainly based on the assumption of maximal $L_p$-regularity of the spatial dispersion term. In particular, this property allows us to characterize completely the generator of the underlying linear semigroup and to give a simple proof of asynchronous exponential  growth of the semigroup. Moreover, maximal regularity is also a powerful tool in order to establish the existence of nontrivial positive equilibrium solutions to nonlinear equations by fixed point arguments or bifurcation techniques. We illustrate the results with examples.
\end{abstract}
%
\maketitle
\pagestyle{myheadings}
\markboth{{\sc Ch. Walker}}{\sc{Population Models with Age and Spatial Structure}}
%
%
\section{Introduction} 

The mathematical description of the dynamics of populations has a long history dating  back to the deterministic models of Euler (1760) and Malthus (1798) for exponential growth and Verhulst (1838) for logistic growth. More advanced models distinguish the individuals of a population by means of a certain characteristic  as e.g. age or spatial position as such may have an influence on the dynamics of the population. Indeed, individuals of different age or spatial position may respond differently, for example, to external effects and likewise are subject to different death and birth processes as well as slower or faster spatial dispersion.

The early linear models introduced by Sharpe $\&$ Lotka (1911) \cite{SharpeLotka11}, McKendrick (1926) \cite{McKendrick} and von Foerster (1959) \cite{vFoerster} and the later  nonlinear model by Gurtin $\&$ MacCamy (1974) \cite{GurtinMacCamy74} are the basis for the description of the evolution of age-structured populations by means of partial differential equations (see \eqref{E3}  below). 
Spatial dispersion of individuals was taken into account within linear theory by Gurtin (1973) \cite{Gurtin73} and in a nonlinear model by Gurtin $\&$ MacCamy (1981) \cite{GurtinMacCamy81}. Since the introduction of these basic models, the research on population  dynamics has grown tremendously and is still very active. Applications of models incorporating both age and spatial structure include epidemic models (e.g. \cite{BusenbergLanglais, 54, 120}), bacteria swarming (e.g. \cite{14, ES, LaurencotWalker_Proteus}), tumor invasion (e.g. \cite{16, DW1, DW2, WalkerDIE, WalkerEJAM}) and many more (see  for instance \cite{WebbSpringer} and the references therein).  In this text we shall focus our attention on a particular, but prototypical model for the dynamics of an age-structured population of one-sex individuals subject to spatial diffusion.

\subsection{Spatially Homogeneous Age-Structured Populations}

For the investigation of spatially inhomogeneous populations it is instructive to briefly recall the classical (linear) model of  Sharpe $\&$ Lotka and McKend\-rick for purely age-structured populations. Many of the  results for this case can then later be carried over to population models with additional spatial structure within a suitable functional analytic framework. For more on age-structured population models we refer e.g. to \cite{IannelliMartchevaMilner,ThiemeBook,WebbSpringer,WebbBook}.

Let $u=u(t,a)\ge0$ be the age-density function of a population at time $t\ge 0$ and chronological age $a\in [0,a_m)$ with $a_m\in (0,\infty]$ denoting the maximal\footnote{If $a_m$ is finite, individuals may attain age greater than $a_m$ but are no longer tracked in the model.} age. If $\mu=\mu(a)\ge 0$ is the age-specific per capita {\it death rate}, then  
$$
u(t+h, a+h)-u(t,a)=-\mu(a) u(t,a) h
$$
with time increment $h$, which entails a balance law for the directional derivative:
\begin{equation*}
\lim_{h\to 0} \frac{u(t+h, a+h)-u(t,a)}{h}=-\mu(a) u(t,a)\,, 
\end{equation*}
or, if the partial derivatives exist,
$$
\partial_t u(t,a)+\partial_a u(t,a)=-\mu(a) u(t,a)\,. 
$$
Letting  $\beta=\beta(a)\ge 0$ be the per capita {\it birth rate}, the {\it total birth rate} at time $t$ is
$$
B(t):=\int_0^{a_m}\beta(a) u(t,a)\,\rd a\,,
$$
which gives the age boundary condition
$$
u(t,0)=\int_0^{a_m}\beta(a) u(t,a)\,\rd a\,. 
$$
Consequently, the evolution of the population is governed by 
\begin{subequations}\label{E3}
\begin{align}
\partial_t u +\partial_a u&=-\mu(a) u\ ,&& t>0\ ,\ & a\in (0,a_m)\, ,\label{3}\\
u(t,0)&=\int_0^{a_m}\beta(a) u(t,a)\,\rd a\ ,&& t>0\, ,\ & \label{4}\\
u(0,a)&=\phi(a)\ , && &a\in (0,a_m)\,,\label{5}
\end{align}
\end{subequations}
with initial distribution $\phi$. Integrating \eqref{3} along characteristics yields the solution in the form
\bqn\label{6}
u(t,a)=\left\{\begin{array}{ll} \Pi(a,a-t)\phi(a-t)\,, & 0\le t\le a<a_m\,,\\
\Pi(a,0) B(t-a)\, , &0\le a<a_m\,,\quad t>a\,,\\
 \end{array} \right.
\eqn
where
$$
\Pi(a,\sigma):=e^{-\int_\sigma^a\mu(\tau)\,\rd \tau}\, ,\quad 0\le \sigma < a\ ,
$$
can be interpreted as the probability\footnote{More precisely, if $\mu(a)>0$ for $a\in (0,a_m)$ and $\int_0^{a_m} \mu(a)\rd a=\infty$, then $1-\Pi(a,0)$ is a probability distribution with density $\mu(a)\Pi(a,0)$.} that an individual of age $\sigma$ survives to age $a$. According to \eqref{4}, the total birth rate $B(t)=u(t,0)$ satisfies the Volterra equation
\bqn\label{8}
B(t)=\int_0^t h(a)\beta(a)\Pi(a,0)B(t-a)\,\rd a+\int_t^{a_m} h(a)\beta(a)\Pi(a,a-t)\phi(a-t)\,\rd a
\eqn
for $t\ge 0$ with cut-off function $h(a):=1$ if $a\in(0,a_m)$ and $h(a):=0$ otherwise,  which is also known as {\it renewal equation} (e.g, see \cite{Feller41, ThiemeBook}).
Obviously, predictions on the asymptotic behavior of solutions are of great interest. 
In this context, {\it stable age distributions} (or: {\it persistence solutions}) are of importance, that is, solutions to \eqref{E3} with separable variables of the form 
$$
u(t,a)=v(t)w(a)\,.
$$ 
Plugging the form of a stable age distribution into \eqref{3} gives
\bqn\label{12}
u(t,a)= e^{\lambda_0 (t-a)}\Pi(a,0)w(0)\ ,\quad t\ge 0,\quad a\in (0,a_m)\ ,
\eqn
with $w(0)> 0$, and where the  {\it Malthusian parameter} (or: {\it intrinsic growth rate}) $\lambda_0$ is a real parameter determined from \eqref{4} by the {\it characteristic equation}
\begin{equation}\label{chareq}
r(\lambda_0)=1
\end{equation}
with
$$
r(\lambda):=\int_0^{a_m}e^{-\lambda a}\beta(a) \Pi(a,0)\,\rd a\,.
$$
Owing to the monotonicity property of $r$, it is clear that there is a unique real value $\lambda_0$ satisfying  \eqref{chareq} and that all complex solutions to \eqref{chareq} come in complex conjugate pairs with real parts smaller than $\lambda_0$. That is, $\lambda_0$ can be seen as a growth bound.

The celebrated {\it  renewal theorem} \cite{SharpeLotka11} (for rigorous proofs see\cite{Feller41, ThiemeBook, WebbBook}) states that -- under some suitable technical assumption on  $\mu$, $\beta$, and $\phi$ -- any solution $u$ to \eqref{E3} approaches a stable age distribution, that is,
\begin{equation}\label{renewal}
\lim_{t\rightarrow\infty}\int_0^{a_m}\left\vert e^{-\lambda_0 t}u(t,a)- e^{-\lambda_0 a}\Pi(a,0)P_{\lambda_0}(\phi)\right\vert\,\rd a=0\ ,
\end{equation}
where the number $P_{\lambda_0}(\phi)$ is given as
$$
P_{\lambda_0}(\phi)=\int_0^{a_m}\beta(a)  \int_0^ae^{-\lambda_0(a-\sigma)}\Pi(a,\sigma)\phi(\sigma)\,\rd\sigma\,\rd a \left(\int_0^{a_m}\beta(a) e^{-\lambda_0 a}\Pi(a,0) a\,\rd a \right)^{-1}\ .
$$
We shall see a similar result for spatially structured models (see Section~\ref{Sec2.2}).\\

According to \eqref{12}, the condition $\lambda_0=0$ determines the equilibrium (i.e. time-independent) solutions $u=u(a)\ge 0$ to \eqref{E3} which are of the form
\bqn\label{EE0}
 u(a)=\Pi(a,0) u(0)\ ,\quad a\ge 0\,,
\eqn
with $u(0)\ge 0$.
Obviously, $u\equiv 0$ is always an equilibrium. Due to \eqref{chareq}, nontrivial equilibria exist if and only if the condition
\bqn\label{9}
r(0)=1
\eqn
is satisfied, where  the {\it net reproduction number}
\bqn\label{10}
r(0)=\int_0^{a_m} \beta(a)\Pi(a,0)\,\rd a
\eqn
gives the average number of newborns of an individual over its lifespan. Thus, nontrivial equilibria for the {\it linear} model \eqref{E3} only exist in the very special case of exact reproduction.

In general, we infer from \eqref{chareq} and the monotonicity of $r$ that
\begin{equation}\label{xx}
\begin{split}
\mathrm{sign}(r(0)-1) = \mathrm{sign}(\lambda_0)\,.
\end{split}
\end{equation} 
In particular, \eqref{renewal} implies that if  $r(0)<1$, then the trivial equilibrium $u\equiv 0$  is globally asymptotically stable in the phase space $L_1((0,a_m))$ while $r(0)>1$ yields an {\it asynchronous exponential growth} of the population (see e.g. \cite{WebbSpringer,GyllenbergWebb}).

Of course, the investigation of linear models is of utmost importance for a deeper understanding of population dynamics. However, a drawback of the linear theory is that an equilibrium exists if and only if the restrictive condition \eqref{9} holds. Nonlinear population models better cope with the intuitive expectation that equilibria should exist in many populations. The model introduced by Gurtin $\&$ MacCamy
 \cite{GurtinMacCamy74} involves birth and death rates $\beta=\beta(U,a)$ and $\mu=\mu(U,a)$ depending on the {\it total population}
$$
U:=\int_0^{a_m} u(a)\rd a\,.
$$
In this case there is a nontrivial equilibrium if (and only if) the equation analogue to \eqref{9},
$$
r_U:=\int_0^{a_m} \beta(U,a)\Pi_U(a,0)\,\rd a=1
$$
posses a solution $U>0$, where 
\bqnn
\Pi_U(a,b):=e^{-\int_b^a\mu(U,\sigma)\,\rd \sigma}\ ,\quad 0\le b < a\,.
\eqnn
The equilibrium $u$ is then given by
$$
u(a)=\Pi_U(a,0)\frac{U}{\int_0^{a_m} \Pi_U(a,0)\,\rd a}\ ,\quad a\in [0,a_m)\ .
$$
In many realistic applications, the death rate is an increasing function of the total population while the birth rate is decreasing with respect to this parameter. In particular, $r_U\le r_0$ for $U\ge 0$. Thus, for the existence of nontrivial equilibria in this case, the condition $r_0\ge 1$ is necessary and  $r_0>1$ can be shown to be sufficient \cite{Pruess83}.

\subsection{Age- and Spatially Structured Model}

We now consider a simple prototype model for an age- and spatially structured population by adding a spatial dependence to the equations \eqref{E3}.
The distribution density of the population with respect to age $a\in [0,a_m)$ and spatial position $x\in\Om$ at time $t\ge 0$ is denoted by $u=u(t,a,x)\ge 0$, where still $a_m\in (0,\infty]$ is the maximal age and $\Om$ is a smooth subset of $\R^n$ representing the habitat of the population. Suppose that the individuals' movement can be described by a diffusion term $\mathrm{div}(d(a,x)\nabla_xu)$, where the dispersal speed $d(a,x)>0$ is age-specific and takes into account spatial heterogeneity of the environment. 
Also the birth and death rates, $\beta=\beta(a,x)\ge 0$ and $\mu=\mu(a,x)\ge 0$, respectively, 
may depend on age $a\in (0,a_m)$ and spatial position $x\in\Om$. If the individuals are bound to the habitat $\Omega$, then the dynamics of the population with initial distribution $\phi=\phi(a,x)\ge 0$ is governed by the equations
\begin{subequations}\label{Eu1a}
\begin{align}
\partial_t u+\partial_a u&=\mathrm{div}\big(d(a,x)\nabla_xu\big)-\mu(a,x)u\ , && t>0\, , &  a\in (0,a_m)\, ,& & x\in\Om\ ,\label{u1a}\\
u(t,0,x)&=\int_0^{a_m} \beta(a,x)u(t,a,x)\,\rd a\ ,& & t>0\, , & & & x\in\Om\ ,\label{u2a}\\
\partial_\nu u(t,a,x)&=0\ ,& & t>0\, , &  a\in (0,a_m)\, ,& & x\in\partial\Om\ ,\label{u3a}\\
u(0,a,x)&=\phi(a,x)\ ,& & &  a\in (0,a_m)\, , & & x\in\Om\ ,\label{u4a}
\end{align}
\end{subequations}
with $\nu$ denoting the outward unit normal on $\partial \Om$. Of course, as in the case of spatially homogeneous populations, more realistic models are nonlinear and include population-dependent spatial movement and vital rates. We shall consider such models later on. 
Equations of the form \eqref{Eu1a} and various variants thereof have attracted considerable interest in the last decades, for example see \cite{14,16,BusenbergLanglais,DucroutMagal,Huyer,Rhandi,RhandiSchnaubelt_DCDS99,ThiemeDCDS,WalkerDIE,WalkerDCDSA10,WebbSpringer,DelgadoEtAl2,GuoChan,KunyaOizumi,LaurencotWalker_Proteus,MagalThieme,WalkerEJAM,120,54,DS2,DS3,DS4,DW1,DW2} and the references therein though this list is far from being complete. \\

For the understanding of what follows it is worth pointing out that the equations \eqref{Eu1a} fit into a more abstract and more general setting.
For this we put
$$
A(a)w:=-\mathrm{div}\big(d(a,\cdot)\nabla_xw\big)+\mu(a,\cdot)w\ ,\quad w\in E_1\ \ ,
$$
where e.g. $E_1:=\Wqb^2(\Om)$ denotes  the space consisting of all  functions $w:\Om\rightarrow \R$ of the Sobolev space  $W_q^{2}(\Om)$ with $q\in (1,\infty)$ satisfying the boundary condition $\partial_\nu w=0$ on $\partial\Om$. Observe that the operator $A$ combines the information on spatial dispersion and mortality.

With this notation and by dropping the $x$-dependence for simplicity, \eqref{Eu1a} can be reformulated as an abstract problem  of the form
\begin{subequations}\label{E1a}
\begin{align}
&\partial_t u\,+\, \partial_au \, +\,     A(a)\,u\, =0\ , && t>0\, ,\quad a\in (0,a_m)\ ,\label{1a}\\ 
&u(t,0)\, =\, \int_0^{a_m}\beta(a)\, u(t,a)\, \rd a\ ,& & t>0\, ,  \label{2a}\\
&u(0,a)\, =\,  \phi(a)\ , & &\hphantom{t>0\ ,\quad} a\in (0,a_m)\,,\label{3a}
\end{align}
\end{subequations}
where $A(a)$ is an unbounded operator in $E_0:=L_q(\Omega)$ with domain $E_1$. 
In the following, we shall focus our attention on this abstract form \eqref{E1a}, where the $x$-dependence is ``hidden'' in the functional analytic framework. This, of course, yields a much wider range of applications and the inclusion of more general elliptic operators subject to possibly other boundary conditions.\\

It is a very striking feature of population models with age and spatial structure that the equilibrium (i.e. time-independent) version of \eqref{E1a}  leads to a ``parabolic'' equation
\begin{align*}
& \partial_au \, +\,     A(a)u\, =0\ , \qquad a\in (0,a_m)\, ,\\ 
&u(0)\, =\, \int_0^{a_m}\beta(a)\, u(a)\, \rd a\, ,   
\end{align*}
for $u=u(a)$. Note well, however, that this is {\it not} an evolution equation in the usual sense since the initial state does not determine the future in view of the nonlocal initial condition. Nevertheless, many techniques from the theory of evolution equations are very useful for a thorough investigation of such equations. In particular, it turns out that the assumption of {\it maximal $L_p$-regularity} of the underlying operator $A$ yields a very powerful tool. Roughly speaking, it provides for any $f\in L_p((0,a_m),E_0)$ a unique solution $u$ to the Cauchy problem
$$
\partial_au + A(a)u =f(a)\,,\quad a\in (0,a_m)\,,\qquad u(0)=0\,,
$$ 
with optimal regularity in the sense that each term $\partial_a u$ and $A u$ belongs again to $L_p((0,a_m),E_0)$ (see assumption $(A_3)$ below for a more precise statement).
The results we are presenting herein are based on this assumption of maximal regularity, which -- as will be explained later in Section~\ref{Sec4} in more detail -- is not really restrictive in applications. In Section~\ref{Sec2.1} we shall see that a  semigroup can be associated with the time-dependent problem \eqref{E1a} whose generator can be characterized completely due the maximal regularity assumption. We shall also see then in Section~\ref{Sec2.2}  that maximal regularity allows us to give a rather simple proof for asynchronous exponential  growth of the semigroup. Furthermore, we rely on the maximal regularity property to establish the existence of nontrivial positive equilibrium solutions for the nonlinear variant of \eqref{E1a} with population-dependent operator $A=A(u,a)$ and population-dependent birth modulus $\beta=\beta(u,a)$: in Section~\ref{Sec3.1} we use a fixed point method in conical shells and in Section~\ref{Sec3.2} we use unilateral local and global bifurcation techniques. 
Some of the proofs given herein are simpler than in the original papers and are included for illustrative purposes. Finally, we give in Section~\ref{Sec4} two applications of the bifurcation results.\\

\subsection*{General Assumptions and Notations} In the following we rely on the theory of semigroups and evolution operators and often use basic properties implicitly. For more on this we refer e.g. to the monographs \cite{LQPP,EngelNagel,EngelNagel2,BatkaiFijavzRhandi}.

If $E$ and $F$ are two Banach spaces (or locally convex spaces) we write $\ml(E,F)$ for the bounded and linear operators from $E$ into $F$ and  $\mathcal{K}(E,F)$ for such operators which are also compact. We set $\ml(E):=\ml(E,E)$ and $\mathcal{K}(E):=\mathcal{K}(E,E)$. If $E$ and $F$ are ordered Banach spaces, we denote the positive operators from $E$ into $F$ by $\ml_+(E,F)$.

From now on and throughout this text we shall make the following assumptions. In order to have a simpler exposition of the results in Section~\ref{Sec2} we only consider the case $a_m<\infty$ but remark that the main results presented herein remain true in the case $a_m=\infty$ (see Remark~\ref{R0}). We set $J:=[0,a_m]$ and let $E_0$ denote a real Banach space ordered by closed convex cone $E_0^+$ (in the following we do not distinguish $E_0$ from its complexification required at certain points). We let $E_1$ be a dense  subspace of $E_0$ such that 
$E_1\dhr E_0$, 
that is, $E_1$ is continuously and compactly embedded in $E_0$. We set $E_\theta:= (E_0,E_1)_\theta$ for
 $\theta\in (0,1)$, where $(\cdot,\cdot)_\theta$ is an arbitrarily fixed admissible interpolation functor. Then  
\begin{equation}\label{dhr}
E_1\dhr E_\theta\dhr E_0\,,\quad \theta\in (0,1)\,,
\end{equation} 
 and $E_\theta$ is equipped with the order naturally induced by $E_0^+$. Given $p\in [1,\infty)$ we use the notation{\footnote{We shall suppress the age interval $J$ also in the writing of other function spaces as no confusion seems likely.}
$$
L_p(E_\theta):=L_p(J,E_\theta)
$$
with positive cone $L_p^+(E_\theta):=L_p^+(J,E_\theta)$.
Further assumptions will be mentioned in the text explicitly.

\section{Linear Theory for Age- and Spatially Structured Populations}\label{Sec2}

In this section we focus  on the linear abstract problem \eqref{E1a}.
It was shown in \cite{WebbSpringer} that a strongly continuous semigroup on $L_1(J,E_0)$ can be associated with \eqref{E1a} if $-A$ is independent of age and generates itself a strongly continuous semigroup on $E_0$ (see also e.g. \cite{GuoChan, Huyer, WalkerDIE}). As in the spatially homogeneous setting \eqref{E3}, this is derived by formally integrating along characteristics what gives the semigroup rather explicitly. Based on \cite{WalkerMOFM} we recall in Section~\ref{Sec2.1} this approach to get a semigroup in $L_p(J,E_0)$ for $p\in [1,\infty)$ and then show in Section~\ref{Sec2.2} the asynchronous exponential  growth of this semigroup when $p\in (1,\infty)$.\\

\noindent To study \eqref{E1a} we assume that 
\Links
\begin{equation}\tag*{${\bf (A_1)}$}
\begin{split} 
& \text{there is $\rho>0$ such that}\ A\in C^\rho\big(J,\mathcal{L}(E_0,E_1)\big) \ \text{and}\ -A(a)\ \text{is the generator}\\
& \text{of an analytic  positive semigroup on $E_0$ with domain $E_1$ for each $a\in J$}\,.
\end{split}
\end{equation}
We further let the birth rate $\beta$ be such that
\begin{equation}\tag*{${\bf (A_2)}$}
\begin{split} 
\beta\in L_{\infty}(J,\ml(E_\theta))\,, \quad \theta\in [0,1] \,,\qquad \beta(a)\in \ml_+(E_0)\, ,\quad a\in J\,.
\end{split}
\end{equation}
\Rechts
For some of the subsequent results not all of these assumptions (and the ones to come) are required (in this strictness). We refer to the original papers for more precise assumptions and corresponding relaxations.\\

According to \cite[II. Corollary 4.4.2, II. Theorem 6.4.2]{LQPP}, assumption $(A_1)$ implies that $A$ generates a {\it positive parabolic evolution operator} 
$$
\Pi(a,\sigma)\,,\quad 0\le\sigma\le a<a_m\,,
$$ 
on $E_0$ with regularity subspace $E_1$. Hence, given $\sigma\in [0,a_m)$ and $v^0\in E_0$, the unique solution 
$$
v\in C^1((\sigma,a_m),E_0)\cap C((0,a_m),E_1)\cap C([\sigma,a_m),E_0)
$$ to
\bqn\label{10aac}
\partial_a v +A(a) v=0\,,\quad a\in (\sigma,a_m)\,,\qquad v(\sigma)=v_0\,,
\eqn
is $$v(a)=\Pi(a,\sigma)v_0\,,\quad a\in [\sigma, a_m)\,.$$ Recall that the operator $A$ combines the information on spatial dispersion and mortality and thus, analogously to the spatially homogeneous setting, $\Pi$ contains information on the survivability and spatial distribution of individuals.

Owing to \cite[II. Lemma 5.1.3]{LQPP} there are $M\ge 1$ and $\varpi\in\R$ such that
\bqn\label{10aa}
\|\Pi(a,\sigma)\|_{\ml (E_\alpha)}+(a-\sigma)^{\alpha-\gamma_1}\|\Pi(a,\sigma)\|_{\ml (E_\gamma,E_\alpha)}\le Me^{-\varpi (a-\sigma)}\ ,\quad 0\le \sigma\le a< a_m\ , 
\eqn
for $0\le\gamma_1\le\gamma<\alpha\le 1$ with $\gamma_1<\gamma$ if $\gamma>0$.

\subsection{The Age-Diffusion Semigroup}\label{Sec2.1}

To associate with \eqref{E1a} a linear semigroup on $L_p(E_0)$  for \mbox{$p\in [1,\infty)$}, we  integrate \eqref{1a} along characteristics and so obtain  (parabolic) equations of the form \eqref{10aac} that we can solve with the help of the evolution operator $\Pi$. Using \eqref{2a}-\eqref{3a} we formally derive in this way  that the solution
$$
[\mS(t)\phi](a):=u(t,a)\,,\qquad t\ge 0\,,\quad a\in J\,,
$$ 
to \eqref{E1a} is given by
  \bqn\label{100}
     \big[\mS(t) \phi\big](a)\, :=\, \left\{ \begin{aligned}
    &\Pi(a,a-t)\, \phi(a-t)\ ,& & 0\le t\le a<a_m\ ,\\
    & \Pi(a,0)\, B_\phi(t-a)\ ,& & 0\le a<a_m\, ,\, t>a\ ,
    \end{aligned}
   \right.
    \eqn
where $B_\phi:=u(\cdot,0)$ satisfies the Volterra equation 
    \bqn\label{500}
    B_\phi(t)\, =\, \int_0^t h(a)\, \beta(a)\, \Pi(a,0)\, B_\phi(t-a)\ \rd
    a\, +\, \int_0^{a_m-t} h(a)\, \beta(a+t)\, \Pi(a+t,a)\, \phi(a)\ \rd a\ ,\quad
    t\ge 0\ ,
    \eqn
with cut-off function $h(a):=1$ if $a\in(0,a_m)$ and $h(a):=0$ otherwise. Note that
  \bqn\label{6a}
    B_\phi(t)\, =\, \int_0^{a_m}  \beta(a)\, \big[\mS(t)\phi\big](a)\ \rd a\ ,\quad t\ge 0\ .
    \eqn
Equations \eqref{100}-\eqref{500} correspond to equations \eqref{6}-\eqref{8} from the spatially homogeneous case.
It can be shown \cite[Lemma 2.1]{WalkerMOFM} that there exists indeed a mapping 
$$
[\phi\mapsto B_\phi]\in\ml \big(L_p(E_0),
C(\R^+,E_0)\big)
$$ 
such that $B_\phi$ is the unique solution to~\eqref{500}, and if
 $\phi\in L_p^+(E_0)$, then $B_\phi(t)\in E_0^+$ for $t\ge 0$. From this one deduces by direct computations \cite[Theorem 2.2, Lemma 3.2]{WalkerMOFM}, \cite[Theorem 4]{WebbSpringer}:

\begin{theorem}[Linear Semigroup]\label{T1}
Suppose $(A_1), (A_2)$, and $p\in [1,\infty)$. Then
\mbox{$\mS:=\{\mS(t)\,;\,  t\ge 0 \}$} defined in \eqref{100}-\eqref{500} is a strongly continuous, positive, and eventually compact semigroup on $L_p(E_0)$.
\end{theorem}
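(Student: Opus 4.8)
\emph{Strategy and the easy properties.} The plan is to check in turn that each $\mS(t)$ maps $L_p(E_0)$ boundedly into itself, that the family is positive, that it obeys the semigroup law, that it is strongly continuous, and finally --- the substantive point --- that $\mS(t)$ is compact for all sufficiently large $t$. Boundedness and positivity are read off the representation \eqref{100} after splitting $J$ into $\{a\ge t\}$ and $\{a<t\}$: on the first block one invokes $\|\Pi(a,a-t)\|_{\ml(E_0)}\le Me^{-\varpi t}$ (part of \eqref{10aa}) together with the substitution $\sigma=a-t$, and on the second block one invokes $\|\Pi(a,0)\|_{\ml(E_0)}\le Me^{|\varpi|a_m}$, the boundedness $[\phi\mapsto B_\phi]\in\ml\big(L_p(E_0),C(\R^+,E_0)\big)$, and the fact that $\{a<t\}\cap J$ has measure $\le\min\{t,a_m\}$; this yields $\|\mS(t)\phi\|_{L_p(E_0)}\le c(t)\,\|\phi\|_{L_p(E_0)}$. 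The same decomposition, combined with the positivity of the parabolic evolution operator $\Pi$ (a consequence of $(A_1)$) and with $B_\phi(t)\in E_0^+$ whenever $\phi\in L_p^+(E_0)$, shows $\mS(t)L_p^+(E_0)\subset L_p^+(E_0)$.

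\emph{Semigroup law and strong continuity.} The semigroup law hinges on the identity $B_{\mS(s)\phi}(r)=B_\phi(r+s)$ for $r,s\ge0$, which I would establish by inserting the function $r\mapsto B_\phi(r+s)$ into the Volterra equation \eqref{500} written for the datum $\mS(s)\phi$ in place of $\phi$, splitting the resulting age integrals at $a=s$, collapsing products $\Pi(a,\sigma)\Pi(\sigma,\tau)=\Pi(a,\tau)$, and appealing to the uniqueness part of \cite[Lemma~2.1]{WalkerMOFM}. Granting this, $\mS(0)=\mathrm{id}$ is immediate from $\Pi(a,a)=\mathrm{id}$, and $\mS(t+s)=\mS(t)\mS(s)$ follows from \eqref{100} by a short case distinction according to the position of $a$ relative to $t$ and $t+s$, using the same evolution identity and the displayed formula for $B_{\mS(s)\phi}$. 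For strong continuity it is enough to prove right-continuity at $t=0$, the case $t>0$ then being a consequence of the semigroup law and the local boundedness in $t$ of $\|\mS(t)\|$. For $\phi$ in the dense subspace $C_c((0,a_m),E_1)$ of $L_p(E_0)$, the second block of \eqref{100} lives on a set of measure $\le t$ and is bounded, hence contributes $O(t^{1/p})$, while on the first block $\|\Pi(a,a-t)\phi(a-t)-\phi(a)\|_{E_0}\to0$ uniformly in $a$ as $t\to0$ --- by $\|\Pi(a,a-t)\psi-\psi\|_{E_0}\le c\,t\,\|\psi\|_{E_1}$ for $\psi\in E_1$ and the uniform continuity of $\phi\colon J\to E_0$ --- so that $\mS(t)\phi\to\phi$ in $L_p(E_0)$; density then settles the general case.

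\emph{Eventual compactness: reduction.} Fix $t_0>3a_m$. Since the first block of \eqref{100} is then empty, $[\mS(t_0)\phi](a)=\Pi(a,0)B_\phi(t_0-a)$ for a.e.\ $a\in J$, so $\mS(t_0)$ factors through the bounded map $\phi\mapsto B_\phi\in C([0,t_0],E_0)$. I would verify the hypotheses of a Fréchet--Kolmogorov-type compactness criterion in $L_p(J,E_0)$ for the set $\mcm:=\{\mS(t_0)\phi:\|\phi\|_{L_p(E_0)}\le1\}$. It is bounded by the previous step. For $0<t_1<t_2<a_m$ the set $\{\int_{t_1}^{t_2}\Pi(a,0)B_\phi(t_0-a)\,\rd a:\|\phi\|_{L_p(E_0)}\le1\}$ is bounded in $E_1$, hence relatively compact in $E_0$, because $\|\Pi(a,0)\|_{\ml(E_0,E_1)}\le c\,a^{-1}e^{-\varpi a}$ by \eqref{10aa} and $E_1\dhr E_0$. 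Finally, the equicontinuity of $a$-translations is obtained by writing, for small $h>0$,
\[
[\mS(t_0)\phi](a+h)-[\mS(t_0)\phi](a)=\big(\Pi(a+h,0)-\Pi(a,0)\big)B_\phi(t_0-a-h)+\Pi(a,0)\big(B_\phi(t_0-a-h)-B_\phi(t_0-a)\big).
\]
The $L_p$-norm of the first summand is $\le c\,\big(\int_0^{a_m}\|\Pi(a+h,0)-\Pi(a,0)\|_{\ml(E_0)}^p\,\rd a\big)^{1/p}$, which tends to $0$ by dominated convergence, since $b\mapsto\Pi(b,0)$ is norm-continuous on $(0,\infty)$ and uniformly bounded there; the $L_p$-norm of the second summand is $\le c\,\sup_{s\in[t_0-2a_m,\,t_0]}\|B_\phi(s+h)-B_\phi(s)\|_{E_0}$.

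\emph{Eventual compactness: the crux.} Everything thus reduces to the uniform equicontinuity of $\{B_\phi:\|\phi\|_{L_p(E_0)}\le1\}$ on the interval $[t_0-2a_m,t_0]$, which lies inside $(a_m,\infty)$ because $t_0>3a_m$; I expect this to be the main obstacle. I would obtain it from the resolvent form of the renewal equation \eqref{500}: with $K(a):=h(a)\beta(a)\Pi(a,0)$ one has $B_\phi=g_\phi+R_K\ast g_\phi$, where $g_\phi$ (the inhomogeneity, i.e.\ the second integral in \eqref{500}) is supported in $[0,a_m)$ and satisfies $\sup_{[0,a_m]}\|g_\phi\|_{E_0}\le c\,\|\phi\|_{L_p(E_0)}$, and where the $\ml(E_0)$-valued resolvent kernel $R_K$ is locally bounded --- in particular locally integrable --- because $\|K(a)\|_{\ml(E_0)}\le\|\beta\|_{\infty}Me^{-\varpi a}$ and $K$ is supported in $[0,a_m)$. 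Hence, for $s\ge a_m$, $B_\phi(s)=\int_0^{a_m}R_K(s-\sigma)g_\phi(\sigma)\,\rd\sigma$, so that for $s,s+h\in[t_0-2a_m,t_0]$
\[
\|B_\phi(s+h)-B_\phi(s)\|_{E_0}\le c\,\|\phi\|_{L_p(E_0)}\,\|R_K(\cdot+h)-R_K(\cdot)\|_{L_1((0,t_0),\ml(E_0))}\longrightarrow0
\]
uniformly in $\phi$, by continuity of translations in $L_1$. With these hypotheses checked, the compactness criterion yields that $\mS(t_0)$ is compact, and therefore $\mS(t)=\mS(t-t_0)\mS(t_0)$ is compact for every $t\ge t_0$; since all the above estimates are uniform in $p\in[1,\infty)$, this completes the proof.
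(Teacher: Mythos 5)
Your proposal is correct and follows essentially the route the paper indicates: direct verification of boundedness, positivity, the semigroup law and strong continuity from the explicit formulas \eqref{100}--\eqref{500} together with uniqueness for the Volterra equation, and eventual compactness via a Kolmogorov--Simon compactness criterion applied to the representation $[\mS(t_0)\phi](a)=\Pi(a,0)B_\phi(t_0-a)$, with the equicontinuity of $\{B_\phi\}$ supplied by the renewal resolvent kernel. The only cosmetic difference is that you get compactness for $t>3a_m$ whereas the paper records it already for $t>a_m$, which is immaterial for eventual compactness.
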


In fact, $\mS(t)$ is compact for $t>a_m$ as can be seen from \eqref{100} by Kolmogorov's compactness criterion \cite[Lemma 3.2]{WalkerMOFM}. 

One of the main advantages of the explicit formula for $\mS(t)$ is that  regularizing properties (with respect the ``$x$-variable'')  inherited from the parabolic evolution operator stated in \eqref{10aa} are readily obtained. For instance, \eqref{10aa}, \eqref{6a}, and the singular Gronwall inequality \cite[II.Corollary 3.3.2]{LQPP} imply
\begin{equation}\label{smoothing}
\|\mS(t)\phi\|_{L_p(E_\theta)} +\|B_\phi(t)\|_{E_\theta}  \le c(\theta) \, t^{-\theta}\, e^{t(-\varpi+\vartheta(\theta))}\, \|\phi\|_{L_p(E_0)}\, , \quad t>0\, ,\quad\phi\in L_p(E_0)\,,
\end{equation}
for $\theta\in [0,1/p)$, where $\vartheta(\theta):=(1+\theta) M \|b\|_{L_\infty(J,\ml(E_\theta))}$. Such regularizing effects play an important role in the study of nonlinear models, see e.g. \cite{LaurencotWalker_Proteus,WalkerDCDSA10,WalkerDIE,WalkerEJAM}. \\

In the following, we let $-\mathbb{A}$ denote the generator of the semigroup $\mS$ on $L_p(E_0)$ with domain $\mathrm{dom}(-\A)$. Since $\mS$ is eventually compact, the spectrum of $-\A$ is a pure point spectrum \cite[V. Corollary 3.2]{EngelNagel} and the {\it growth bound} 
$$
\omega(-\A):=\lim_{t\rightarrow\infty}\frac{\log\|\mS(t)\|_{\mathcal{L}(E_0)}}{t}=\inf_{t>0}\frac{\log\|\mS(t)\|_{\mathcal{L}(E_0)}}{t}
$$
and the {\it spectral bound}
$$
s(-\A):=\sup\{\mathrm{Re}\,\lambda\,;\, \lambda\in \sigma(-\A)\}
$$
coincide \cite[IV.Corollary 3.12]{EngelNagel}. Moreover, since $\mS$ is positive, this is a spectral value of $-\A$ provided that $E_0$ is a Banach lattice \cite[Corollary 12.9]{BatkaiFijavzRhandi}:

\begin{corollary}\label{C3}
Suppose $(A_1), (A_2)$, and let $p\in [1,\infty)$. Then $\omega(-\A)=s(-\A)$ and, if $E_0$ is a Banach lattice, this is an eigenvalue of $-\A$.
\end{corollary}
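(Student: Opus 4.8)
The plan is to obtain both assertions by combining three facts that are all already available in the literature cited above. First I would invoke Theorem~\ref{T1}: the semigroup $\mS$ is eventually compact on $L_p(E_0)$, and in fact $\mS(t)\in\K(L_p(E_0))$ for $t>a_m$ by the remark following that theorem. Since eventual compactness entails eventual norm continuity, the generator $-\A$ satisfies $\omega(-\A)=s(-\A)$ by \cite[IV.\,Corollary~3.12]{EngelNagel}; this is the first claim. Eventual compactness also forces $\sigma(-\A)$ to be a pure point spectrum by \cite[V.\,Corollary~3.2]{EngelNagel}, so that every spectral value of $-\A$ is in fact an eigenvalue (of finite algebraic multiplicity, with only finitely many in any half-plane $\{\mathrm{Re}\,\lambda\ge c\}$).

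For the second assertion, suppose in addition that $E_0$ is a Banach lattice. Then $L_p(E_0)=L_p(J,E_0)$ is again a Banach lattice, its positive cone being $L_p^+(E_0)$, and by Theorem~\ref{T1} the semigroup $\mS$ on this space is positive with generator $-\A$. Applying the standard result on positive $C_0$-semigroups on Banach lattices (\cite[Corollary~12.9]{BatkaiFijavzRhandi}), the spectral bound $s(-\A)$ is a spectral value of $-\A$, i.e. $s(-\A)\in\sigma(-\A)$.

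Combining the two steps, $s(-\A)\in\sigma(-\A)$ while $\sigma(-\A)$ consists of eigenvalues only; hence $s(-\A)=\omega(-\A)$ is an eigenvalue of $-\A$, which is the assertion.

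I do not expect a genuine obstacle: the proof is essentially a matter of stringing together the cited theorems. The only points deserving brief verification are that $L_p(J,E_0)$ genuinely inherits the Banach-lattice structure of $E_0$ (so that \cite{BatkaiFijavzRhandi} is applicable) and that the hypotheses behind the spectral statements of \cite{EngelNagel} — eventual norm continuity, respectively eventual compactness — are met, which is immediate from the explicit representation \eqref{100} together with Kolmogorov's compactness criterion, as already observed after Theorem~\ref{T1}.
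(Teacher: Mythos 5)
Your argument is correct and coincides with the paper's own justification, which is given in the text immediately preceding the corollary: eventual compactness of $\mS$ (Theorem~\ref{T1}) gives the pure point spectrum via \cite[V.~Corollary~3.2]{EngelNagel} and $\omega(-\A)=s(-\A)$ via \cite[IV.~Corollary~3.12]{EngelNagel}, while positivity of $\mS$ on the Banach lattice $L_p(E_0)$ makes $s(-\A)$ a spectral value by \cite[Corollary~12.9]{BatkaiFijavzRhandi}, hence an eigenvalue. Your additional remarks (eventual norm continuity, $L_p(J,E_0)$ inheriting the lattice structure) are just the routine verifications the paper leaves implicit.
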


In general it does not seem to be possible to determine the domain  $\mathrm{dom}(-\A)$ without further assumptions, and only a core (that is, a subspace which is dense in the Banach space $\mathrm{dom}(-\A)$ equipped with the operator graph norm) can be provided \cite[Proposition 2.2]{WalkerDIE} (see also \cite{Rhandi}). In some applications a more precise characterization is desired. For this purpose we strengthen assumption $(A_1)$ and now require that the operator $A$ possesses the maximal $L_p$-regularity property already alluded to in the introduction. We refer e.g. to \cite{LQPP,Dore,JPQuasi} for more on this. \\

If $p\in (1,\infty)$ is fixed, we put \mbox{$\varsigma:=\varsigma(p):=1-1/p$} and set 
\begin{equation}\label{Esigma}
E_{\varsigma}:=(E_0,E_1)_{\varsigma,p}\,, 
\end{equation}
where $(\cdot,\cdot)_{\varsigma,p}$ is the real interpolation functor \cite{LQPP,Triebel}. Moreover, introducing
\begin{equation*}
\bbWp:= W_p^1(E_0)\cap L_p(E_1)
\end{equation*}
we recall that
\bqn\label{BUC}
\bbWp\hookrightarrow BUC(E_{\varsigma})
\eqn
according to, e.g. \cite[III.Theorem 4.10.2]{LQPP}, where $BUC(E_{\varsigma})$ stands for the (bounded and uniformly) continuous $E_{\varsigma}$-valued functions on $J$. In particular, the trace $\gamma_0 u:= u(0)$ is well-defined for $u\in \bbWp$ and 
\begin{equation}\label{gamma0}
\gamma_0\in\ml\big(\bbWp,E_\varsigma\big)\,.
\end{equation} 
We then introduce the following crucial assumption:
\Links
\begin{equation}\tag*{${\bf (A_3)}$}
\begin{split} 
&\text{$p\in (1,\infty)$ and the operator}\ A\  \text{has {\it maximal}}\ L_p\text{{\it -regularity}, that is,}\\ 
& (\partial_a+A,  \gamma_0)\in \mathrm{Isom}\big(\bbWp, L_p(E_0)\times E_{\varsigma}\big)\,, 
\end{split}
\end{equation}
\Rechts
where $\mathrm{Isom}(X,Y)$ denotes the isomorphisms between the Banach spaces $X$ and $Y$. 
This implies that, for any $(f,v^0)\in L_p(E_0)\times E_{\varsigma}$ and $\lambda\in \C$, the unique solution $v$ to
$$
\partial_a v +(\lambda+A(a)) v=f(a)\,,\quad a\in (0,a_m)\,,\qquad v
(0)=v^0\,,
$$
 given by
\begin{equation}\label{S}
v(a)= \Pi_\lambda(a,0)v^0 +\int_0^a \Pi_\lambda(a,\sigma)\,f(\sigma)\, \rd \sigma\,,\qquad a\in J\,,
\end{equation}
with
$$
\Pi_\lambda(a,\sigma):=e^{-\lambda(a-\sigma)}\Pi(a,\sigma)\,,\quad 0\le \sigma\le a\le a_m\,,
$$
belongs to $\bbWp$. In particular, $\partial_a v\in L_p(E_0)$ and $Av\in L_p(E_0)$.\\

Writing the resolvent of the generator $-\A$ by means of the Laplace transform formula and using \eqref{100} we get for $\mathrm{Re}\,\lambda$ large and $\psi\in L_p(E_0)$ the identity
\begin{equation}\label{res}
\big[(\lambda+\A)^{-1}\psi\big](a)=\int_0^\infty e^{-\lambda t}\, \big[\mS(t)\, \psi\big](a)\, \rd t\,= v_\lambda(a)+w_\lambda(a)
\end{equation}
for $a\in J$ with
$$
v_\lambda(a):= \Pi_\lambda(a,0) \int_0^\infty e^{-\lambda \sigma} B_\psi(\sigma)\, \rd\, \sigma\,,\,,\qquad w_\lambda(a):= \int_0^a \Pi_\lambda(a,\sigma)\,\psi(\sigma)\, \rd \sigma
$$
so that \eqref{S} and assumption $(A_3)$ imply that both $v_\lambda$ and $w_\lambda$ belong to $\bbWp$. Therefore, $\mathrm{dom}(-\A)\subset \bbWp$ and, using \eqref{100} and \eqref{6a}, we obtain a precise characterization of the generator $-\A$, see
\cite[Theorem 2.8]{WalkerMOFM}:

\begin{theorem}[Generator Characterization]\label{T2}
Assume $(A_1)-(A_3)$. Then the generator $-\mathbb{A}$ of the semigroup $\mS$ on $L_p(E_0)$ is given by
\begin{align*}
&\mathrm{dom}(-\A)=\left\{\phi\in \bbWp\,;\,
 \phi(0)=\int_0^{a_m}\beta(a)\,\phi(a)\, \rd a\right\}\,,\\
& \A \phi=\partial_a\phi +A\phi\,,\quad\phi\in
\mathrm{dom}(-\A)\,.
\end{align*}
\end{theorem}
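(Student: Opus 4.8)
The plan is to identify $\mathrm{dom}(-\A)$ with the range of the resolvent $(\lambda+\A)^{-1}$ for one fixed $\lambda$ with $\mathrm{Re}\,\lambda$ large (the domain and the action of $\A$ not depending on the choice of $\lambda$ in the resolvent set) and then to read everything off from the explicit representation \eqref{res}. The a priori regularity is already in hand: as noted before the statement, \eqref{S} together with $(A_3)$ shows that both summands $v_\lambda$ and $w_\lambda$ of $(\lambda+\A)^{-1}\psi$ lie in $\bbWp$, so $\mathrm{dom}(-\A)\subset\bbWp$. Two things then remain: (i) to check that elements of $\mathrm{dom}(-\A)$ satisfy the nonlocal boundary condition and that $\A$ acts as $\partial_a+A$ on them, and (ii) to prove the reverse inclusion.

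For (i) I would take $\phi=(\lambda+\A)^{-1}\psi=v_\lambda+w_\lambda$. Since $w_\lambda(0)=0$ and $v_\lambda(0)=\int_0^\infty e^{-\lambda\sigma}B_\psi(\sigma)\,\rd\sigma$, inserting \eqref{6a} and interchanging the order of integration (legitimate by the exponential bound \eqref{10aa} and the smoothing estimate \eqref{smoothing}) gives
\[
\phi(0)=\int_0^\infty e^{-\lambda\sigma}B_\psi(\sigma)\,\rd\sigma=\int_0^{a_m}\beta(a)\Big(\int_0^\infty e^{-\lambda t}\big[\mS(t)\psi\big](a)\,\rd t\Big)\,\rd a=\int_0^{a_m}\beta(a)\,\phi(a)\,\rd a\,,
\]
which is precisely the nonlocal constraint. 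Moreover, by \eqref{S} the function $\phi=v_\lambda+w_\lambda$ solves $\partial_a\phi+(\lambda+A)\phi=\psi$ in $L_p(E_0)$, hence $\A\phi=\psi-\lambda\phi=\partial_a\phi+A\phi$.

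For (ii) I would start from $\phi\in\bbWp$ with $\phi(0)=\int_0^{a_m}\beta(a)\phi(a)\,\rd a$ and set $\psi:=\partial_a\phi+(\lambda+A)\phi$; since $\phi\in W_p^1(E_0)\cap L_p(E_1)$ and $A(a)\in\ml(E_1,E_0)$ uniformly in $a$ by $(A_1)$, one has $\psi\in L_p(E_0)$. By $(A_3)$ (in its $\lambda$-perturbed form stated after $(A_3)$), $\phi$ is the unique $\bbWp$-solution of $\partial_a v+(\lambda+A)v=\psi$, $v(0)=\phi(0)$, so \eqref{S} gives $\phi(a)=\Pi_\lambda(a,0)\phi(0)+\int_0^a\Pi_\lambda(a,\sigma)\psi(\sigma)\,\rd\sigma$. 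By \eqref{res}, $(\lambda+\A)^{-1}\psi$ has the very same form with $\phi(0)$ replaced by $d:=\int_0^\infty e^{-\lambda\sigma}B_\psi(\sigma)\,\rd\sigma$. Applying $\int_0^{a_m}\beta(a)(\cdot)\,\rd a$ to both representations and using \eqref{6a} together with the Fubini interchange of (i), one sees that $\phi(0)$ and $d$ both solve the affine equation $(I-Q_\lambda)c=r_\lambda$ in $E_{\varsigma}$, where $Q_\lambda:=\int_0^{a_m}\beta(a)\Pi_\lambda(a,0)\,\rd a$ and $r_\lambda:=\int_0^{a_m}\beta(a)\int_0^a\Pi_\lambda(a,\sigma)\psi(\sigma)\,\rd\sigma\,\rd a$. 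From \eqref{10aa} and $(A_2)$ one gets $\|Q_\lambda\|_{\ml(E_{\varsigma})}\to0$ as $\mathrm{Re}\,\lambda\to\infty$, so for $\mathrm{Re}\,\lambda$ large $I-Q_\lambda$ is invertible, forcing $d=\phi(0)$ and hence $\phi=(\lambda+\A)^{-1}\psi\in\mathrm{dom}(-\A)$; the identity $\A\phi=\partial_a\phi+A\phi$ then follows from (i).

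I do not expect a conceptual obstacle here: the decisive input is assumption $(A_3)$, which simultaneously yields the membership $\mathrm{dom}(-\A)\subset\bbWp$ and the solution formula \eqref{S} used on both sides of the comparison. The part requiring care is the bookkeeping around the renewal quantity $B_\psi$ — justifying the Laplace/Fubini interchanges (covered by \eqref{10aa} and \eqref{smoothing}) and checking that the trace values $\phi(0)$ and $d$ are governed by the \emph{same} operator equation $(I-Q_\lambda)c=r_\lambda$, whose unique solvability for large $\mathrm{Re}\,\lambda$ is exactly what closes the converse inclusion.
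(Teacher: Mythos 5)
Your proposal is correct and follows essentially the same route as the paper (and the cited proof in \cite{WalkerMOFM}): the Laplace-transform resolvent formula \eqref{res} combined with \eqref{S} and $(A_3)$ gives $\mathrm{dom}(-\A)\subset\bbWp$ and the action $\partial_a+A$, the nonlocal trace condition comes from \eqref{6a} via Fubini, and the converse inclusion is closed by comparing $\phi(0)$ with $d=\int_0^\infty e^{-\lambda\sigma}B_\psi(\sigma)\,\rd\sigma$ through the equation $(1-Q_\lambda)c=r_\lambda$, which is uniquely solvable for $\mathrm{Re}\,\lambda$ large since $\|Q_\lambda\|\to0$. No gaps beyond the a priori regularity of $v_\lambda$ (membership of the Laplace integral of $B_\psi$ in $E_\varsigma$), which you legitimately take from the discussion preceding the theorem.
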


Theorem~\ref{T2} shows that $\mS$ (being derived formally in the first instance) is indeed the correct choice for the solution operator corresponding to \eqref{E1a}. In combination with Theorem~\ref{T1} we deduce that for any initial value $\phi\in \bbWp$ satisfying 
$$
\phi(0)=\int_0^{a_m}\beta(a)\,\phi(a)\, \rd a\,,
$$
the unique strong solution $u$ to \eqref{E1a}  satisfies
$$
u\in C\big(\R^+,\bbWp\big)\cap C^1(\R^+,L_p(E_0))\,,\qquad u(t)=\mS(t)\phi\,,\quad t\ge 0\,.
$$ 
If $\phi$ belongs merely to $L_p(E_0)$, then $u(t)=S(t)\phi$,\, $t\ge 0$, defines a mild solution in $C(\R^+,L_p(E_0))$. Moreover, $u(t)\in L_p^+(E_0)$ for $t\ge 0$ if $\phi\in L_p^+(E_0)$. 

Together with Kato's theory on evolution operators  \cite{Kato1,Pazy}, Theorem~\ref{T2} paves also the way to consider problems involving time-dependent operators $A=A(t,a)$, see \cite[Proposition 2.10]{WalkerMOFM} for more details (and \cite{RhandiSchnaubelt_DCDS99, WalkerDCDSA10} for related results).\\

With regard to qualitative aspects of solutions it is important to have more information on the spectrum of $-\A$. Actually, we can now either argue as in Corollary~\ref{C3} with the eventual compactness of $\mS$ and \cite[V. Corollary 3.2]{EngelNagel}  or else with the fact that $-\A$ has a compact resolvent by \cite[Corollary 4]{Simon} and Theorem~\ref{T2} (due to the compact embedding of $E_1$  in $E_0$ and $a_m<\infty$): both facts imply that the spectrum of $-\A$ is a pure point spectrum, countable with no finite accumulation point, and any eigenvalue of $-\A$ is a pole of the resolvent $(\lambda+\A)^{-1}$.

Moreover, if $\lambda\in\C$ is an eigenvalue of $-\A$, i.e. $(\lambda+\A)\phi=0$ with $\phi\in \mathrm{dom}(-\A)\setminus\{0\}$, then Theorem~\ref{T2} and \eqref{S} readily imply that
\bqn\label{kerA}
\phi(a)=\Pi_\lambda(a,0)\phi(0)\,,\quad a\in J\,, \qquad \phi(0)=Q_\lambda \phi(0)\, ,
\eqn 
where the operator $Q_\lambda\in\ml(E_0)$ is defined as
\bqn\label{Qlambda}
Q_\lambda:=\int_0^{a_m}\beta(a)\,  \Pi_\lambda(a,0)\ \rd a\, .
\eqn 
Clearly, \eqref{kerA} means that $1$ is an eigenvalue of $Q_\lambda$ with eigenvector $\phi(0)$. It is easily seen that also the converse is true and that the geometric multiplicities of the eigenvalues are the same \cite[Lemma 3.1]{WalkerMOFM}. 
Now, assumption $(A_2)$, \eqref{10aa}, and \eqref{Qlambda} warrant the smoothing property
\bqn\label{Qcomp11}
Q_\lambda\in\mathcal{L}(E_0,E_\theta)\cap \mathcal{L}(E_{1-\theta},E_1)\, ,\qquad\theta\in [0,1)
\eqn 
and hence
\bqn\label{Qcomp1}
Q_\lambda\in\mathcal{K}(E_\theta)\,,\quad \theta\in [0,1)\,.
\eqn 
Therefore, \mbox{$\sigma(Q_\lambda\vert_{E_\theta})\setminus\{0\}$} consists only of eigenvalues and is independent of $\theta\in [0,1)$. 
Consequently, we obtain:

\begin{corollary}\label{C1a}
Assume $(A_1)-(A_3)$. Then the set
$$
\sigma(-\A)=\sigma_p(-\A)=\{\lambda\in\C\,;\, 1\in\sigma_p(Q_\lambda)\}
$$
is countable with no finite accumulation point and any $\lambda \in \sigma_p(-\A)$ is a pole of the resolvent of $-\A$.
Moreover, the geometric multiplicities of the eigenvalues $\lambda\in \sigma_p(-\A)$ and $1\in\sigma_p(Q_\lambda)$ coincide.
\end{corollary}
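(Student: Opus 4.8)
The plan is to assemble the statement from the generator characterization of Theorem~\ref{T2}, the eigenvalue identities \eqref{kerA}--\eqref{Qlambda}, and the compactness properties already recorded above. \emph{First} I would dispose of the structural part: by Theorem~\ref{T2} and \cite[Corollary~4]{Simon} (using $a_m<\infty$ and $E_1\dhr E_0$) the operator $-\A$ has a compact resolvent, so by the standard spectral theory of closed operators with compact resolvent its spectrum is a pure point spectrum, $\sigma(-\A)=\sigma_p(-\A)$, countable, without finite accumulation point, and every eigenvalue is a pole of $(\lambda+\A)^{-1}$; alternatively one can invoke the eventual compactness of $\mS$ from Theorem~\ref{T1} together with \cite[V.~Corollary~3.2]{EngelNagel}.

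\emph{Next}, for a fixed $\lambda\in\C$ I would exhibit a linear isomorphism between the eigenspaces $\ker(\lambda+\A)$ and $\ker(1-Q_\lambda)$, namely the trace map $\Phi_\lambda\colon\phi\mapsto\phi(0)$. That $\Phi_\lambda$ sends $\ker(\lambda+\A)$ into $\ker(1-Q_\lambda)$ and is injective is exactly the content of \eqref{kerA}, since there $\phi$ is recovered from $\phi(0)$ via $\phi(a)=\Pi_\lambda(a,0)\phi(0)$. For surjectivity, given $\xi\in\ker(1-Q_\lambda)$ I would set $\phi(a):=\Pi_\lambda(a,0)\xi$; by the smoothing property \eqref{Qcomp11} one has $\xi=Q_\lambda\xi\in E_\varsigma$, so assumption $(A_3)$ applied to \eqref{S} with $f=0$ and $v^0=\xi$ yields $\phi\in\bbWp$ with $\partial_a\phi+(\lambda+A)\phi=0$, and by \eqref{Qlambda},
\[
\phi(0)=\xi=Q_\lambda\xi=\int_0^{a_m}\beta(a)\,\Pi_\lambda(a,0)\,\xi\,\rd a=\int_0^{a_m}\beta(a)\,\phi(a)\,\rd a\,,
\]
whence $\phi\in\mathrm{dom}(-\A)$ by Theorem~\ref{T2} and $(\lambda+\A)\phi=0$ (this is the converse recorded in \cite[Lemma~3.1]{WalkerMOFM}). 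Thus $\Phi_\lambda$ is a linear isomorphism of $\ker(\lambda+\A)$ onto $\ker(1-Q_\lambda)$, which gives simultaneously the set equality $\sigma_p(-\A)=\{\lambda\in\C\,;\,1\in\sigma_p(Q_\lambda)\}$ and the coincidence of the geometric multiplicities; that these multiplicities are finite, and that the statement ``$1\in\sigma_p(Q_\lambda)$'' does not depend on the choice of $E_\theta$, $\theta\in[0,1)$, follows from the compactness and smoothing properties \eqref{Qcomp11}--\eqref{Qcomp1}.

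\emph{The point requiring the most care} is the surjectivity of $\Phi_\lambda$: the candidate $\phi(a)=\Pi_\lambda(a,0)\xi$ is a priori only a classical solution of $\partial_a\phi+(\lambda+A)\phi=0$, and it is precisely the maximal $L_p$-regularity assumption $(A_3)$ — used together with the a priori regularity $\xi\in E_\varsigma$ bootstrapped from $\xi=Q_\lambda\xi$ via \eqref{Qcomp11} — that upgrades $\phi$ to an element of $\bbWp$, so that the trace $\gamma_0$ from \eqref{gamma0}, and hence the nonlocal condition in Theorem~\ref{T2}, can be applied to it. Once this is in place, the remaining assertions are routine consequences of working with an operator that has a compact resolvent.
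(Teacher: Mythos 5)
Your proposal is correct and follows essentially the same route as the paper: the structural spectral facts come from the compact resolvent (via Theorem~\ref{T2} and \cite[Corollary 4]{Simon}) or, alternatively, eventual compactness of $\mS$, and the identification $\sigma_p(-\A)=\{\lambda\,;\,1\in\sigma_p(Q_\lambda)\}$ with equal geometric multiplicities comes from the trace correspondence \eqref{kerA}. The only difference is that you spell out the converse direction (surjectivity of $\phi\mapsto\phi(0)$ via $\Pi_\lambda(\cdot,0)\xi$, the bootstrap $\xi=Q_\lambda\xi\in E_\varsigma$ from \eqref{Qcomp11}, and maximal regularity $(A_3)$), which the paper delegates to \cite[Lemma 3.1]{WalkerMOFM}; your filled-in argument is sound.
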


Recall that $\mathrm{ker}(\A)$ consists exactly of the equilibrium (i.e. time-independent) solutions to~\eqref{E1a}. Thus, taking $\lambda=0$ in \eqref{kerA}, we obtain:

\begin{remark}\label{R3}
There is an  equilibrium  solution $u\in\mathrm{dom}(-\A)$ to \eqref{E1a} if and only if $1\in\sigma_p(Q_0)$. In this case we have, analogously to \eqref{EE0}, that $$u(a)=\Pi(a,0)u(0)\,,\quad a\in J\,,$$ with $u(0)\in E_1\cap \mathrm{ker}(1-Q_0)$. If $u(0)$ belongs to $E_0^+$, then $u(a)\in E_0^+$ for $a\in J$.
\end{remark}

The analogue interpretation to \eqref{10} is that $Q_0$ contains information about the spatial distribution of the average number of offspring per individual over the entire lifespan of the individual. In this sense, $Q_0$ is  a ``spatial reproduction operator''.

\subsection{Asynchronous Exponential Growth}\label{Sec2.2}

We shall give the analogue of the renewal theorem \eqref{renewal}. 
More precisely, we show that there is a real value $\lambda_0$ (given by the growth respectively spectral bound of $-\A$) such that the semigroup $\mS$ is exponentially decreasing with growth rate $\lambda_0<0$ or has {\it asynchronous exponential growth with intrinsic growth constant} $\lambda_0>0$, that is, $e^{-\lambda_0 t}\mS(t)$ converges exponentially to some nonzero rank one projection in $\mathcal{L}(E_0)$ as $t\rightarrow\infty$. The sign of $\lambda_0$ will be given by the sign of $r(Q_0)-1$, where we recall that the {\it spectral radius} of an operator $T\in\mathcal{L}(E_0)$ is
$$
r(T):=\lim_{n\rightarrow\infty} \|T^n\|_{\mathcal{L}(E_0)}^{1/n}\,.
$$ 
In particular, we determine the (in-)stability of the trivial equilibrium in terms of the spectral radius $r(Q_0)$.\\

For this purpose we fix
throughout this subsection  $p\in (1,\infty)$ and assume $(A_1)-(A_3)$. 
Furthermore, we shall impose that
\Links
\begin{equation}\tag*{${\bf (A_4)}$}
\begin{split} 
E_0 \ \text{is a Banach lattice}
\end{split}
\end{equation}
and that\footnote{Recall that if $E$ is an ordered Banach space, then $T\in \ml(E)$ is {\it strongly positive} if $Tz\in E$ is a {\it quasi-interior point} for each $z\in E^+\setminus\{0\}$, that is, if $\langle z',Tz\rangle_{E} >0$ for every $z'\in (E')^+\setminus\{0\}$.}
\begin{equation}\tag*{${\bf (A_5)}$}
\begin{split} 
\beta(a)\Pi(a,0)\in \ml_+(E_0) \ \text{is strongly positive for $a$ in a subset of $J$ of positive measure}\ .
\end{split}
\end{equation}
\Rechts

\noindent Assumption $(A_5)$ and \eqref{Qlambda}, \eqref{Qcomp1} entail that, for $\lambda\in\R$, 
$$
Q_\lambda\in\mathcal{K}(E_0)\ \text{is strongly positive} \,.
$$
This property and the celebrated Kre\u{\i}n-Rutman Theorem (e.g. see \cite[Theorem 3.2]{AmannSIAM}, \cite[Theorem 12.3, Corollary 12.4]{DanersKochMedina}) along with \eqref{Qcomp11} imply the next result.

\begin{lemma}\label{L0} Suppose $(A_1)-(A_5)$. For $\lambda\in\R$,  the spectral radius $r(Q_\lambda)$ is positive and a simple eigenvalue of $Q_\lambda\in\ml(E_0)$ with an  eigenvector $\zeta_\lambda \in E_1$ that is quasi-interior in $E_0^+$. It is the only eigenvalue of $Q_\lambda$ with a positive eigenvector. Moreover, $r(Q_\lambda)$ is an eigenvalue of the dual operator $Q_\lambda'\in\ml(E_0')$ with a positive eigenfunctional $\zeta_\lambda'\in E_0'$.  Finally, for every $ \psi\in E_0^+\setminus\{0\}$, the equation
$(\xi-Q_\lambda)\phi=\psi$
has exactly one positive solution $\phi$ if $\xi > r(Q_\lambda)$ and no positive solution $\phi$ for $\xi \le r(Q_\lambda)$.
\end{lemma}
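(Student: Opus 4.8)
The plan is to derive everything from the Kre\u{\i}n--Rutman theorem once we know that $Q_\lambda$ is a compact, strongly positive operator on the Banach lattice $E_0$; the excerpt has already recorded (from $(A_5)$ together with \eqref{Qlambda} and \eqref{Qcomp1}) that for real $\lambda$ the operator $Q_\lambda\in\mathcal{K}(E_0)$ is strongly positive, so the hypotheses of the strong form of Kre\u{\i}n--Rutman (as in \cite[Theorem 3.2]{AmannSIAM} or \cite[Theorem 12.3, Corollary 12.4]{DanersKochMedina}) are met. First I would invoke that theorem directly: it gives that $r(Q_\lambda)>0$, that $r(Q_\lambda)$ is an algebraically (hence geometrically) simple eigenvalue of $Q_\lambda$ with an eigenvector $\zeta_\lambda\in E_0^+$ that is in fact quasi-interior in $E_0^+$, that no other eigenvalue of $Q_\lambda$ has an associated positive eigenvector, and that $r(Q_\lambda)$ is also an eigenvalue of the dual operator $Q_\lambda'$ with a strictly positive eigenfunctional $\zeta_\lambda'\in E_0'$. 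The positivity $r(Q_\lambda)>0$ uses strong positivity: picking any $z\in E_0^+\setminus\{0\}$ we have $Q_\lambda z$ quasi-interior, so $Q_\lambda$ cannot be nilpotent, and compactness forces $r(Q_\lambda)$ to be a genuine nonzero eigenvalue.

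Next I would upgrade the regularity of the eigenvector. Kre\u{\i}n--Rutman a priori only gives $\zeta_\lambda\in E_0$; to see $\zeta_\lambda\in E_1$ I would use the smoothing property \eqref{Qcomp11}, namely $Q_\lambda\in\mathcal{L}(E_0,E_\theta)$ for $\theta\in[0,1)$ and $Q_\lambda\in\mathcal{L}(E_{1-\theta},E_1)$. From the eigenvalue identity $Q_\lambda\zeta_\lambda=r(Q_\lambda)\zeta_\lambda$ and $\zeta_\lambda\in E_0$ we get $\zeta_\lambda=r(Q_\lambda)^{-1}Q_\lambda\zeta_\lambda\in E_\theta$ for every $\theta<1$; feeding $\zeta_\lambda\in E_{1-\theta}$ back into the second mapping property yields $\zeta_\lambda=r(Q_\lambda)^{-1}Q_\lambda\zeta_\lambda\in E_1$, which is the claimed regularity. (This bootstrap is essentially the same argument already used in the excerpt to obtain $u(0)\in E_1$ in Remark~\ref{R3}.)

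For the final assertion about solvability of $(\xi-Q_\lambda)\phi=\psi$ with $\psi\in E_0^+\setminus\{0\}$, I would argue as follows. If $\xi>r(Q_\lambda)$ then $\xi\in\varrho(Q_\lambda)$ and the Neumann series $(\xi-Q_\lambda)^{-1}=\xi^{-1}\sum_{n\ge 0}\xi^{-n}Q_\lambda^{\,n}$ converges; each term is positive and the zeroth term $\xi^{-1}\psi$ is nonzero, so $\phi:=(\xi-Q_\lambda)^{-1}\psi\in E_0^+$ (in fact quasi-interior, by one application of strong positivity), and it is the unique solution since $\xi-Q_\lambda$ is invertible. If $0<\xi<r(Q_\lambda)$, suppose for contradiction that a positive solution $\phi$ exists; pairing with the strictly positive dual eigenfunctional $\zeta_\lambda'$ from Kre\u{\i}n--Rutman gives $\langle\zeta_\lambda',\psi\rangle=\langle\zeta_\lambda',(\xi-Q_\lambda)\phi\rangle=(\xi-r(Q_\lambda))\langle\zeta_\lambda',\phi\rangle\le 0$, whereas $\langle\zeta_\lambda',\psi\rangle>0$ because $\psi\in E_0^+\setminus\{0\}$ and $\zeta_\lambda'$ is strictly positive — a contradiction. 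The boundary case $\xi=r(Q_\lambda)$ is handled the same way: a positive $\phi$ would force $\langle\zeta_\lambda',\psi\rangle=0$, impossible. (If one also wanted $\xi\le 0$, positivity of $Q_\lambda$ makes $(\xi-Q_\lambda)\phi\le\xi\phi\le 0$ for $\phi\in E_0^+$, so no positive solution, but the stated range $\xi\le r(Q_\lambda)$ with $\xi$ presumably positive is covered by the dual-eigenfunctional argument anyway.)

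The only genuinely delicate point is the very first one — verifying that the quoted Kre\u{\i}n--Rutman statement applies with all the strong conclusions (simplicity of $r(Q_\lambda)$, quasi-interiority of the eigenvector, existence of a strictly positive dual eigenfunctional). This rests entirely on $Q_\lambda$ being compact \emph{and} strongly positive on the Banach lattice $E_0$, and that in turn is exactly where assumption $(A_5)$ and $(A_4)$ enter: $(A_5)$ ensures $\beta(a)\Pi(a,0)$ is strongly positive on a set of positive measure, and since $\Pi_\lambda(a,0)=e^{-\lambda a}\Pi(a,0)$ differs from $\Pi(a,0)$ only by a positive scalar, the integral $Q_\lambda=\int_0^{a_m}\beta(a)\Pi_\lambda(a,0)\,\mathrm{d}a$ inherits strong positivity. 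Everything else is a routine application of the spectral theory of positive compact operators. I would therefore present the proof as: (i) $Q_\lambda$ compact and strongly positive; (ii) apply Kre\u{\i}n--Rutman to get $r(Q_\lambda)>0$ simple with quasi-interior eigenvector and positive dual eigenfunctional; (iii) bootstrap the eigenvector into $E_1$ via \eqref{Qcomp11}; (iv) settle the solvability dichotomy by Neumann series for $\xi>r(Q_\lambda)$ and by testing against $\zeta_\lambda'$ for $\xi\le r(Q_\lambda)$.
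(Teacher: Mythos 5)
Your argument is correct and follows exactly the route the paper takes: the paper proves Lemma~\ref{L0} simply by citing the Kre\u{\i}n--Rutman theorem (\cite[Theorem 3.2]{AmannSIAM}, \cite[Theorem 12.3, Corollary 12.4]{DanersKochMedina}) for the compact, strongly positive operator $Q_\lambda$ on the Banach lattice $E_0$, together with the smoothing property \eqref{Qcomp11} to place the eigenvector in $E_1$. Your Neumann-series and dual-eigenfunctional arguments merely spell out conclusions already contained in the cited theorems, so the proposal matches the paper's proof in substance.
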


Using the previous result we can derive very precise information on the dependence of the spectral radius $r(Q_\lambda)$ on $\lambda$. 

\begin{corollary}\label{L1}
Suppose $(A_1)-(A_5)$.
The mapping  $$\R\rightarrow (0,\infty)\,,\quad \lambda\mapsto r(Q_\lambda)$$ is continuous and strictly decreasing with \mbox{$\lim_{\lambda\rightarrow\infty}r(Q_\lambda) =0$} and \mbox{$\lim_{\lambda\rightarrow-\infty}r(Q_\lambda) =\infty$}.
\end{corollary}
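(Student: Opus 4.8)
The plan is to deduce everything from Lemma~\ref{L0} together with the explicit representation \eqref{Qlambda} of $Q_\lambda$ and the uniform estimate \eqref{10aa} on the evolution operator $\Pi$. First I would establish \emph{monotonicity}. Fix $\lambda<\mu$ in $\R$. Writing $\Pi_\lambda(a,0)=e^{-\lambda a}\Pi(a,0)$ and $\Pi_\mu(a,0)=e^{-\mu a}\Pi(a,0)$, one has $\Pi_\mu(a,0)=e^{-(\mu-\lambda)a}\Pi_\lambda(a,0)\le \Pi_\lambda(a,0)$ as positive operators on the Banach lattice $E_0$ for every $a\in J$ (since $e^{-(\mu-\lambda)a}\le 1$ and $\Pi(a,0)\ge 0$). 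Integrating against $\beta(a)\ge 0$ gives $0\le Q_\mu\le Q_\lambda$ in $\ml_+(E_0)$. For strongly positive compact operators the Kre\u{\i}n–Rutman framework (monotonicity of the spectral radius for positive operators, e.g.\ \cite{DanersKochMedina}) yields $r(Q_\mu)\le r(Q_\lambda)$; to get \emph{strict} monotonicity I would use assumption $(A_5)$: on a set of positive measure $\beta(a)\Pi(a,0)$ is strongly positive, so $Q_\lambda-Q_\mu=\int_0^{a_m}(1-e^{-(\mu-\lambda)a})\beta(a)\Pi(a,0)\,\rd a$ is itself strongly positive, and then a standard argument (apply both sides to the positive eigenvector $\zeta_\mu$ of $Q_\mu$ and pair with the positive eigenfunctional $\zeta_\lambda'$ of $Q_\lambda$ from Lemma~\ref{L0}) forces the strict inequality $r(Q_\mu)<r(Q_\lambda)$.

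Next, \emph{continuity}. The map $\R\to\ml(E_0)$, $\lambda\mapsto Q_\lambda$, is continuous (indeed real-analytic): by \eqref{10aa}, $\|\Pi(a,0)\|_{\ml(E_0)}\le Me^{-\varpi a}$ is bounded on the compact interval $J$, and $\lambda\mapsto e^{-\lambda a}$ is continuous uniformly in $a\in J$, so dominated convergence in the Bochner integral \eqref{Qlambda} gives $\|Q_\lambda-Q_{\lambda_0}\|_{\ml(E_0)}\to 0$ as $\lambda\to\lambda_0$. Since the spectral radius is upper semicontinuous in general and, for the eigenvalue furnished by Kre\u{\i}n–Rutman of a norm-continuous family of strongly positive compact operators, in fact continuous (the simple isolated eigenvalue $r(Q_{\lambda_0})$ depends continuously on the operator — e.g.\ via Rellich's theorem on perturbation of isolated eigenvalues, or directly from the characterization in Lemma~\ref{L0} of $r(Q_\lambda)$ as the unique $\xi$ for which $(\xi-Q_\lambda)\phi=\psi$ is solvable with positive $\phi$), the function $\lambda\mapsto r(Q_\lambda)$ is continuous on $\R$.

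Finally, the \emph{limits}. For $\lambda\to+\infty$: from \eqref{10aa} and $(A_2)$, $\|Q_\lambda\|_{\ml(E_0)}\le \|\beta\|_{L_\infty(J,\ml(E_0))}\int_0^{a_m}e^{-\lambda a}Me^{-\varpi a}\,\rd a\to 0$, hence $r(Q_\lambda)\le\|Q_\lambda\|_{\ml(E_0)}\to 0$; combined with $r(Q_\lambda)>0$ this gives $\lim_{\lambda\to\infty}r(Q_\lambda)=0$. For $\lambda\to-\infty$: here I would argue from below. By $(A_5)$ pick $a_*\in(0,a_m)$ in the set of positive measure where $\beta(a)\Pi(a,0)$ is strongly positive, together with a small interval $I\subset J$ around $a_*$ of positive measure on which strong positivity persists (or simply use the set itself). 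For $a\in I$, $\beta(a)\Pi_\lambda(a,0)=e^{-\lambda a}\beta(a)\Pi(a,0)\ge e^{-\lambda a}\,\beta(a)\Pi(a,0)$ and, since $\lambda<0$ makes $e^{-\lambda a}=e^{|\lambda|a}\ge e^{|\lambda|\delta}$ for $a$ bounded below by some $\delta>0$ on $I$, we get $Q_\lambda\ge e^{|\lambda|\delta}\int_I\beta(a)\Pi(a,0)\,\rd a=:e^{|\lambda|\delta}R$ with $R$ a fixed nonzero positive (indeed strongly positive) compact operator, so $r(R)>0$ by Lemma~\ref{L0} applied to $R$. Monotonicity of the spectral radius for positive operators then gives $r(Q_\lambda)\ge e^{|\lambda|\delta}r(R)\to\infty$ as $\lambda\to-\infty$.

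I expect the main obstacle to be the \emph{continuity} of $\lambda\mapsto r(Q_\lambda)$: upper semicontinuity of the spectral radius is automatic, but lower semicontinuity (equivalently, that the Kre\u{\i}n–Rutman eigenvalue cannot jump down in the limit) needs the simplicity and isolation of $r(Q_\lambda)$ from Lemma~\ref{L0} plus a perturbation-of-isolated-eigenvalue argument for the norm-continuous family $Q_\lambda$; alternatively one can bypass this by exploiting the characterization of $r(Q_\lambda)$ via solvability of $(\xi-Q_\lambda)\phi=\psi$ with $\phi\ge 0$ (last statement of Lemma~\ref{L0}) and a sandwiching argument using the already-established monotonicity. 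Everything else is a routine application of \eqref{10aa}, $(A_2)$, $(A_5)$, and Lemma~\ref{L0}.
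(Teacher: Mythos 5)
Your proof is correct, and three of the four ingredients (strict monotonicity via pairing $\zeta_{\lambda}'$ with the eigenvector of the other operator, the bound $r(Q_\lambda)\le\|Q_\lambda\|_{\ml(E_0)}\to 0$ as $\lambda\to\infty$, and the lower bound $Q_\lambda\ge e^{-\lambda\delta}R$ with $R$ a fixed strongly positive compact operator for $\lambda<0$) are essentially identical to the paper's argument. The genuine divergence is the continuity step: your main route is norm continuity of $\lambda\mapsto Q_\lambda$ plus perturbation theory for the isolated simple Kre\u{\i}n--Rutman eigenvalue (upper semicontinuity of the spectrum gives $\limsup_{\lambda\to\lambda_0}r(Q_\lambda)\le r(Q_{\lambda_0})$, persistence of the isolated eigenvalue gives the reverse inequality), which is valid but invokes comparatively heavy machinery. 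The paper instead exploits the explicit exponential structure: for $\varepsilon>0$ one has $e^{\varepsilon a_m}Q_\lambda\gg Q_{\lambda-\varepsilon}$ and $Q_{\lambda+\varepsilon}\gg e^{-\varepsilon a_m}Q_\lambda$, so the already-established comparison argument (together with $r(cT)=c\,r(T)$) yields the two-sided estimate $e^{-\varepsilon a_m}r(Q_\lambda)<r(Q_{\lambda+\varepsilon})<r(Q_\lambda)<r(Q_{\lambda-\varepsilon})<e^{\varepsilon a_m}r(Q_\lambda)$, and continuity follows by letting $\varepsilon\to 0$ --- this is precisely the ``sandwiching'' shortcut you mention as a fallback, and it buys a self-contained, quantitative argument (an explicit modulus of continuity) that never leaves the positivity framework of Lemma~\ref{L0}, whereas your primary route buys generality, since it would apply to any norm-continuous family of compact strongly positive operators without the special scaling relation between $Q_\lambda$ and $Q_{\lambda\pm\varepsilon}$.
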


\begin{proof}
 Given $\xi>\lambda$, it readily follows from the monotonicity of the function $[\lambda\mapsto e^{-\lambda a}]$ that $Q_\lambda  \gg Q_\xi$, that is, the operator $Q_\lambda -Q_\xi\in \ml(E_0)$ is strongly positive, hence
$$
\langle \zeta_\lambda',Q_\lambda \zeta_\xi\rangle_{E_0}\,>\,\langle \zeta_\lambda',Q_\xi \zeta_\xi\rangle_{E_0} \,.
$$
Therefore, using Lemma~\ref{L0},
$$
r(Q_\lambda)\,\langle \zeta_\lambda',\zeta_\xi\rangle_{E_0}\, =\,\langle Q_\lambda'\zeta_\lambda', \zeta_\xi\rangle_{E_0}\, =\, \langle \zeta_\lambda',Q_\lambda \zeta_\xi\rangle_{E_0}\,>\,\langle \zeta_\lambda',Q_\xi \zeta_\xi\rangle_{E_0} =\, r(Q_\xi)\, \langle \zeta_\lambda',\zeta_\xi\rangle_{E_0} \ ,
$$
hence $r(Q_\lambda)>r(Q_\xi)$ so that $[\lambda\mapsto r(Q_\lambda)]$ is strictly decreasing. Now, if $\lambda\in \R$ is fixed and $\varepsilon>0$, we similarly have
$$
e^{\varepsilon a_m}\, Q_{\lambda} \gg Q_{\lambda-\varepsilon} \,,\qquad Q_{\lambda+\varepsilon} \gg e^{-\varepsilon a_m}\, Q_{\lambda}
$$
and thus, 
$$
e^{\varepsilon a_m}\, r(Q_{\lambda})  > r(Q_{\lambda-\varepsilon}) > r(Q_{\lambda}) > r(Q_{\lambda+\varepsilon}) > e^{-\varepsilon a_m}\, r(Q_{\lambda})\,.
$$
Letting $\varepsilon\rightarrow 0$ implies  the continuity of \mbox{$\lambda\mapsto r(Q_\lambda)$}. Next, $(A_2)$ and \eqref{10aa} ensure
$$
0< r(Q_\lambda)\le \|Q_\lambda\|_{\mathcal{L}(E_0)}\rightarrow0\ ,\quad \lambda\rightarrow\infty\,.$$ 
Finally, there is $\delta\in (0,a_m)$ such that
$$
M:=\int_\delta^{a_m}\beta(a)\,  \Pi(a,0)\, \rd a\in \mathcal{K}(E_0)\ \text{is strongly positive}\ .
$$
Thus, if $\lambda<0$, then $Q_{\lambda} \ge  e^{-\lambda \delta} M$ from which  $r(Q_{\lambda})\ge e^{-\lambda\delta} r(M)>0$. Consequently, \mbox{$\lim_{\lambda\rightarrow-\infty}r(Q_\lambda) =\infty$}.
\end{proof}

It readily follows from Lemma~\ref{L1} that there is a unique $\lambda_0\in\R$ such that 
\begin{equation}\label{lambda0}
r(Q_{\lambda_0})=1\,,
\end{equation}
which is the analogue to \eqref{chareq}.

\begin{corollary}
Suppose $(A_1)-(A_5)$ and let $\lambda_0\in\R$ satisfy \eqref{lambda0}. Then $\lambda_0=s(-\A)=\omega(-\A)$  is a simple eigenvalue of~$-\A$.
\end{corollary}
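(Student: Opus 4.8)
The plan is to combine the spectral characterization of $-\A$ in Corollary~\ref{C1a} with the Perron--Frobenius properties of the operators $Q_\lambda$ (Lemma~\ref{L0}, Corollary~\ref{L1}), and then to rule out Jordan chains by an explicit variation-of-constants computation. As a first step, note that since $1=r(Q_{\lambda_0})$ is a (simple) eigenvalue of $Q_{\lambda_0}$ by Lemma~\ref{L0}, Corollary~\ref{C1a} gives $\lambda_0\in\sigma_p(-\A)$, shows that $\lambda_0$ is a pole of the resolvent of $-\A$, and identifies the geometric multiplicity of $\lambda_0$ with that of the eigenvalue $1$ of $Q_{\lambda_0}$; hence $\ker(\lambda_0+\A)$ is one-dimensional. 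By \eqref{kerA} it is spanned by $\phi_0$ with $\phi_0(a)=\Pi_{\lambda_0}(a,0)\,\zeta_0$ and $\phi_0(0)=\zeta_0:=\zeta_{\lambda_0}\in E_1$, the quasi-interior eigenvector of $Q_{\lambda_0}$ supplied by Lemma~\ref{L0}.

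Next I would show that $\lambda_0$ is the spectral bound. Let $\mu\in\sigma_p(-\A)$; by Corollary~\ref{C1a} there is $z\neq0$ with $Q_\mu z=z$. Positivity of $\beta(a)$ and of $\Pi(a,0)$ together with the lattice structure $(A_4)$ yields the pointwise estimate $|z|=|Q_\mu z|\le Q_{\mathrm{Re}\,\mu}\,|z|$, and $|z|\in E_0^+\setminus\{0\}$. Testing against the strictly positive dual eigenfunctional $\zeta'_{\mathrm{Re}\,\mu}$ of $Q_{\mathrm{Re}\,\mu}$ (Lemma~\ref{L0}; strict positivity follows from the strong positivity of $Q_{\mathrm{Re}\,\mu}$) and using $Q'_{\mathrm{Re}\,\mu}\zeta'_{\mathrm{Re}\,\mu}=r(Q_{\mathrm{Re}\,\mu})\,\zeta'_{\mathrm{Re}\,\mu}$ gives $\langle\zeta'_{\mathrm{Re}\,\mu},|z|\rangle\le r(Q_{\mathrm{Re}\,\mu})\,\langle\zeta'_{\mathrm{Re}\,\mu},|z|\rangle$ with $\langle\zeta'_{\mathrm{Re}\,\mu},|z|\rangle>0$, whence $r(Q_{\mathrm{Re}\,\mu})\ge1=r(Q_{\lambda_0})$ and therefore $\mathrm{Re}\,\mu\le\lambda_0$ by the strict monotonicity in Corollary~\ref{L1}. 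Since $\lambda_0\in\sigma(-\A)$ as well, this proves $\lambda_0=s(-\A)$, and $s(-\A)=\omega(-\A)$ was already recorded in Corollary~\ref{C3}.

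The remaining, and main, point is algebraic simplicity. Because $\ker(\lambda_0+\A)=\mathrm{span}\{\phi_0\}$, it suffices to show $\phi_0\notin\mathrm{range}(\lambda_0+\A)$: this excludes a Jordan chain of length two, so that $\ker(\lambda_0+\A)^2=\ker(\lambda_0+\A)$, and since $\lambda_0$ is a pole of the resolvent it is then a first-order pole whose spectral projection has one-dimensional range, i.e. $\lambda_0$ is a simple eigenvalue. Suppose, for contradiction, $(\lambda_0+\A)\psi=\phi_0$ for some $\psi\in\mathrm{dom}(-\A)$. By Theorem~\ref{T2}, $\psi\in\bbWp$ solves $\partial_a\psi+(\lambda_0+A)\psi=\phi_0$ with $\psi(0)=\int_0^{a_m}\beta(a)\,\psi(a)\,\rd a$, so by \eqref{S} and the identity $\Pi_{\lambda_0}(a,\sigma)\Pi_{\lambda_0}(\sigma,0)=\Pi_{\lambda_0}(a,0)$ one computes
$$
\psi(a)=\Pi_{\lambda_0}(a,0)\,\psi(0)+\int_0^a\Pi_{\lambda_0}(a,\sigma)\Pi_{\lambda_0}(\sigma,0)\,\zeta_0\,\rd\sigma=\Pi_{\lambda_0}(a,0)\big(\psi(0)+a\,\zeta_0\big)\,.
$$
Inserting this into the boundary condition gives $(1-Q_{\lambda_0})\psi(0)=R\,\zeta_0$ with $R:=\int_0^{a_m}a\,\beta(a)\,\Pi_{\lambda_0}(a,0)\,\rd a$, and applying $\zeta'_{\lambda_0}$ (and using $Q'_{\lambda_0}\zeta'_{\lambda_0}=\zeta'_{\lambda_0}$) forces $\langle\zeta'_{\lambda_0},R\,\zeta_0\rangle=0$. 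But
$$
\langle\zeta'_{\lambda_0},R\,\zeta_0\rangle=\int_0^{a_m}a\,e^{-\lambda_0 a}\,\langle\zeta'_{\lambda_0},\beta(a)\Pi(a,0)\zeta_0\rangle\,\rd a>0\,,
$$
since the integrand is nonnegative and, by $(A_5)$ together with the quasi-interiority of $\zeta_0$ and the strict positivity of $\zeta'_{\lambda_0}$, strictly positive on a set of positive measure. This contradiction completes the proof.

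I expect the last step to be the main obstacle; its crux is recognizing that a length-two Jordan chain would force the solvability condition $\langle\zeta'_{\lambda_0},R\,\zeta_0\rangle=0$, which strong positivity $(A_5)$ makes impossible. Some routine care is also needed for the modulus inequality in the second step (which genuinely uses $(A_4)$ and the positivity of $\beta$ and $\Pi$) and for the standard fact that, at a pole of the resolvent, ascent one is equivalent to algebraic simplicity.
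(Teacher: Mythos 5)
Your proposal is correct, and its skeleton matches the paper's: one-dimensionality of $\mathrm{ker}(\lambda_0+\A)$ via Corollary~\ref{C1a} and Lemma~\ref{L0}, a variation-of-constants computation as in \eqref{S} reducing a putative Jordan chain to the identity $(1-Q_{\lambda_0})\psi(0)=\int_0^{a_m}a\,\beta(a)\Pi_{\lambda_0}(a,0)\zeta_{\lambda_0}\,\rd a$, and a positivity contradiction. Where you genuinely differ is in the two contradiction mechanisms. For simplicity, the paper does not use the dual eigenfunctional: it adds $\tau\,\Pi_{\lambda_0}(\cdot,0)\zeta_{\lambda_0}$ to $\psi$ so that the new initial value is in $E_\varsigma^+\setminus\{0\}$ and then invokes the last statement of Lemma~\ref{L0} (no positive solution of $(\xi-Q_{\lambda_0})\phi=\psi$ for $\xi\le r(Q_{\lambda_0})$); you instead pair the identity with $\zeta_{\lambda_0}'$ and use $Q_{\lambda_0}'\zeta_{\lambda_0}'=\zeta_{\lambda_0}'$ to force $\langle\zeta_{\lambda_0}',R\zeta_{\lambda_0}\rangle=0$, which $(A_5)$ forbids. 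Your route is slightly cleaner in that it needs no positivity of $\psi(0)+\tau\zeta_{\lambda_0}$ at all (the paper's choice of $\tau$ tacitly requires that such a shift lands in the cone), at the cost of the (correctly justified) observation that strong positivity of $Q_{\lambda_0}$ makes $\zeta_{\lambda_0}'$ strictly positive on $E_0^+\setminus\{0\}$. For the identification $\lambda_0=s(-\A)$, the paper argues more economically: by Corollary~\ref{C3} and $(A_4)$ the spectral bound $s$ is itself a (real) eigenvalue, so $1\in\sigma_p(Q_s)$, hence $r(Q_s)\ge1$ and $s\le\lambda_0$ by the strict monotonicity in Corollary~\ref{L1}; you instead prove the stronger bound $\mathrm{Re}\,\mu\le\lambda_0$ for every eigenvalue $\mu$ via the lattice modulus estimate $|Q_\mu z|\le Q_{\mathrm{Re}\,\mu}|z|$ and the dual eigenfunctional, which is also valid under $(A_4)$ and does not rely on the Banach-lattice eigenvalue statement of Corollary~\ref{C3} (only on $\omega(-\A)=s(-\A)$), giving in passing the domination of $\lambda_0$ over all real parts of the point spectrum. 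Your final reduction (ascent one at a pole of the resolvent implies algebraic simplicity) is the standard fact the paper uses implicitly, so no gap there.
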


\begin{proof}
(i) Recall from Lemma~\ref{L0} that  $r(Q_{\lambda_0})=1$ is a simple eigenvalue of $Q_{\lambda_0}$with positive eigenvector $\zeta_{\lambda_0}$. Thus, by Corollary~\ref{C1a}, $\mathrm{ker}(\lambda_0+\A)$ is one-dimensional and spanned by \mbox{$\varphi:=\Pi_{\lambda_0}(\cdot,0)\zeta_{\lambda_0}$}. In order that $\lambda_0$ is simple, it remains to show that $\mathrm{ker}(\lambda_0+\A)^2\subset \mathrm{ker}(\lambda_0+\A)$. Let \mbox{$\psi\in\mathrm{ker}(\lambda_0+\A)^2$} and set $$\phi:=(\lambda_0+\A)\psi\in \mathrm{ker}(\lambda_0+\A)\ .$$ Then $\phi=\alpha\varphi$ for some $\alpha\in\R$. Suppose $\alpha\not= 0$, so without loss of generality $\alpha>0$. Let $\tau>0$ be such that $\tau\zeta_{\lambda_0}+\psi(0)\in E_\varsigma^+\setminus\{0\}$ and put $v:=\tau\varphi+\psi\in\mathrm{dom}(-\A)$. Then $(\lambda_0+\A)v=\phi$ and from Theorem~\ref{T2} it follows that
$$
v(a)=\Pi_{\lambda_0}(a,0) v(0)+\alpha\int_0^a\Pi_{\lambda_0}(a,\sigma)\,\Pi_{\lambda_0}(\sigma,0)\,\zeta_{\lambda_0}\ \rd \sigma= \Pi_{\lambda_0}(a,0) v(0)+ \alpha\,a\, \Pi_{\lambda_0}(a,0)\,\zeta_{\lambda_0}
$$
for $a\in J$ and
$$
v(0)=\int_0^{a_m} \beta(a)\, v(a)\ \rd a \ .
$$
Plugging the former into the second formula yields
$$
(1-Q_{\lambda_0}) v(0)=\alpha \int_0^{a_m} \beta(a)\, a\, \Pi_{\lambda_0}(a,0)\,\zeta_{\lambda_0}\ \rd a\ .
$$
As $v(0)$ and the right-hand side are both positive and nonzero, we derive from Lemma~\ref{L0} a contradiction to \mbox{$r(Q_{\lambda_0})=1$}. Consequently, $\alpha=0$ and hence $\phi=0$. Therefore, $\lambda_0$ is a simple eigenvalue of $-\A$.

(ii) It follows from (i) and Corollary~\ref{C3} that $\lambda_0\le s:=s(-\A)=\omega(-\A)$. Moreover, Corollary~\ref{C3}, Corollary~\ref{C1a}, and $(A_4)$ imply that $1\in \sigma_p(Q_s)$, hence $r(Q_s)\ge 1 = r(Q_{\lambda_0})$ from which $\lambda_0\ge s$. This yields the assertion.
\end{proof}

In summary, we now know that $\mS$ is a strongly continuous eventually compact semigroup on $L_p(E_0)$ and that $\lambda_0$ is a dominant eigenvalue of the generator $-\A$ and a first-order pole of the resolvent. Let 
\bqn\label{PPP}
P_{\lambda_0}:=\frac{1}{2\pi i}\int_{\vert\lambda-\lambda_0\vert=\epsilon} \, (\lambda+\A)^{-1}\,\rd \lambda
\eqn
for $0<\epsilon<\mathrm{dist}\big(\lambda_0,\sigma(-\A)\setminus\{\lambda_0\}\big)$ denote the corresponding rank one spectral projection  $$P_{\lambda_0}: L_p(E_0)\rightarrow \mathrm{ker}(\lambda_0 +\A)$$ with $\mathrm{ker}(\lambda_0 +\A)=\mathrm{span}\{\Pi_{\lambda_0}(\cdot,0)\zeta_{\lambda_0}\}$. The Residue Theorem and the fact that $\lambda_0$ is a simple pole of the resolvent allow one to derive from \eqref{res}  actually  an (almost) explicit formula for the spectral projection $P_{\lambda_0}$ (see \cite[Proposition 3.8]{WalkerMOFM}), that is,
\bqn\label{PP}
P_{\lambda_0}\phi=\frac{\langle \zeta_{\lambda_0}',H_{\lambda_0}\phi\rangle}{\langle \zeta_{\lambda_0}',\int_0^{a_m} a \beta(a)\Pi_{\lambda_0}(a,0)\rd a\,\zeta_{\lambda_0}\rangle}\Pi_{\lambda_0}(\cdot,0)\zeta_{\lambda_0}\ 
\eqn
for $\phi\in L_p(E_0)$, where 
$$
 H_{\lambda_0}\phi:=\int_0^{a_m}\beta (s)\, \int_0^s\Pi_{\lambda_0}(s,\sigma)\,\phi(\sigma)\ \rd\sigma\,\rd s
 $$
and $\zeta_{\lambda_0} \in E_1^+$ and $\zeta_{\lambda_0}'\in \mathcal{L}(E_0')$  stem from Lemma~\ref{L0}.

We are now in a position to apply \cite[Proposition 2.3]{Webb_TAMS_87} (see also \cite[V. Corollary 3.3]{EngelNagel}) and obtain the asynchronous exponential growth of the semigroup $\mS$:

\begin{theorem}[Asynchronous Exponential Growth]\label{C14}
Suppose $(A_1)-(A_5)$ and let $\lambda_0\in\R$ satisfy \eqref{lambda0}. Then there are $\delta>0$ and $N\ge 1$ such that
$$
\|e^{-\lambda_0 t}\, \mS(t)- P_{\lambda_0}\|_{\ml(L_p(E_0))}\le N e^{-\delta t}\,,\quad t\ge 0\,,
$$
where $P_{\lambda_0}$ is the projection given by \eqref{PPP} - \eqref{PP}.
\end{theorem}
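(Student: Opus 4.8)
The plan is to deduce the assertion from the abstract result on asynchronous exponential growth for positive, eventually compact, strongly continuous semigroups, namely \cite[Proposition 2.3]{Webb_TAMS_87} (equivalently \cite[V.~Corollary~3.3]{EngelNagel}). The precise formulation of that result states the following: if $\mS$ is a strongly continuous semigroup on a Banach space $X$ and $\lambda_0$ is a pole of the resolvent $(\lambda-(-\A))^{-1}$ that is \emph{dominant}, meaning $\mathrm{Re}\,\lambda < \lambda_0$ for every $\lambda\in\sigma(-\A)\setminus\{\lambda_0\}$, \emph{strictly} in the sense that $\sup\{\mathrm{Re}\,\lambda : \lambda\in\sigma(-\A)\setminus\{\lambda_0\}\} < \lambda_0$, and if moreover the pole has order one and the associated spectral projection $P_{\lambda_0}$ has finite rank, then $e^{-\lambda_0 t}\mS(t)\to P_{\lambda_0}$ with exponential rate. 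Thus the entire argument reduces to checking four hypotheses, all of which have essentially been assembled in the preceding corollaries.

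First I would verify that $\lambda_0$ is indeed an eigenvalue of $-\A$ that is a first-order pole of the resolvent: this is exactly the content of the preceding corollary (whose proof shows $\lambda_0 = s(-\A)=\omega(-\A)$ is a \emph{simple} eigenvalue), combined with Corollary~\ref{C1a}, which guarantees that every element of $\sigma_p(-\A)$ is a pole of the resolvent; simplicity of the eigenvalue forces the pole to have order one. Second, I would establish that $P_{\lambda_0}$ has rank one: since $\lambda_0$ is a simple eigenvalue, $\mathrm{ran}(P_{\lambda_0})=\mathrm{ker}(\lambda_0+\A)=\mathrm{span}\{\Pi_{\lambda_0}(\cdot,0)\zeta_{\lambda_0}\}$, which is one-dimensional by Lemma~\ref{L0}. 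Third, I would argue that $\lambda_0$ is \emph{strictly} dominant, i.e.\ there is a spectral gap. Since $\lambda_0 = s(-\A)$, no other spectral value has strictly larger real part, so I only need to exclude other spectral values \emph{on} the line $\mathrm{Re}\,\lambda=\lambda_0$ and, beyond that, obtain a uniform gap. Both follow from Corollary~\ref{C1a}: by that corollary $\sigma(-\A)$ has no finite accumulation point and consists of poles, so there are only finitely many $\lambda\in\sigma(-\A)$ in any vertical strip of finite width; hence the supremum of $\mathrm{Re}\,\lambda$ over $\sigma(-\A)\setminus\{\lambda_0\}$ is attained and is strictly less than $\lambda_0$. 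To see that no $\lambda\neq\lambda_0$ on the critical line $\mathrm{Re}\,\lambda=\lambda_0$ belongs to $\sigma(-\A)$, I would invoke Corollary~\ref{C1a} once more: such $\lambda$ would satisfy $1\in\sigma_p(Q_\lambda)$, hence $r(Q_\lambda)\ge |1|=1$; but by the estimate $Q_\lambda \ll Q_{\mathrm{Re}\,\lambda}$ in modulus (owing to $|e^{-\lambda a}| = e^{-\mathrm{Re}\,\lambda\, a}$ and the strong positivity from $(A_5)$), together with Lemma~\ref{L0}, one gets $r(Q_\lambda) < r(Q_{\mathrm{Re}\,\lambda}) = r(Q_{\lambda_0}) = 1$ unless $\mathrm{Re}\,\lambda=\lambda_0$ and $\lambda$ is real, i.e.\ $\lambda=\lambda_0$ — the usual Kre\u{\i}n--Rutman style argument. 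Fourth, the strong continuity of $\mS$ and its eventual compactness are provided by Theorem~\ref{T1}, and the explicit identification of $P_{\lambda_0}$ with \eqref{PPP}--\eqref{PP} is the already-cited computation in \cite[Proposition 3.8]{WalkerMOFM}, obtained from the resolvent formula \eqref{res} via the Residue Theorem.

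The main obstacle is the third point — pinning down the strict spectral gap, and in particular the claim that $\lambda_0$ is the \emph{only} spectral value on the line $\mathrm{Re}\,\lambda=\lambda_0$. The inequality $r(Q_\lambda)<r(Q_{\mathrm{Re}\,\lambda})$ for non-real $\lambda$ requires a genuine strict domination $|Q_\lambda| \ll Q_{\mathrm{Re}\,\lambda}$ in the appropriate sense, which is where strong positivity of $\beta(a)\Pi(a,0)$ on a set of positive measure (assumption $(A_5)$) does the essential work, paralleling the classical fact for the scalar characteristic equation \eqref{chareq} that all non-real roots have real part strictly below $\lambda_0$; one must be a little careful that the strict inequality survives the integration over $a$ and the passage to the spectral radius, but Lemma~\ref{L0} (simplicity of $r(Q_\lambda)$ and the characterization of positive solvability) packages exactly what is needed. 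Once these four hypotheses are in place, \cite[Proposition 2.3]{Webb_TAMS_87} yields constants $N\ge 1$ and $\delta>0$ with $\|e^{-\lambda_0 t}\mS(t)-P_{\lambda_0}\|_{\ml(L_p(E_0))}\le N e^{-\delta t}$ for $t\ge 0$, which is the claimed estimate.
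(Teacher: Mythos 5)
Your proposal is correct and follows essentially the same route as the paper: the paper likewise assembles the ingredients from the preceding results (eventual compactness of $\mS$ from Theorem~\ref{T1}, simplicity of $\lambda_0=s(-\A)=\omega(-\A)$, the pole/rank-one structure of $P_{\lambda_0}$ from Corollary~\ref{C1a} and \eqref{PPP}--\eqref{PP}) and then invokes \cite[Proposition 2.3]{Webb_TAMS_87}, resp.\ \cite[V. Corollary 3.3]{EngelNagel}. One small caveat: the uniform spectral gap should be drawn from eventual compactness (essential growth bound $-\infty$, hence only finitely many eigenvalues in any right half-plane) rather than from the mere absence of finite accumulation points, which by itself would not rule out eigenvalues with real parts creeping up to $\lambda_0$ along a vertical strip; your exclusion of non-real peripheral eigenvalues via $(A_4)$, $(A_5)$ and the Kre\u{\i}n--Rutman argument is exactly the dominance statement the paper imports from \cite{WalkerMOFM}.
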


\noindent Let us emphasize that, by construction of $\lambda_0$ in \eqref{lambda0}, we have (compare with \eqref{xx}) 
$$
\mathrm{sign}(\lambda_0)= \mathrm{sign}(r(Q_{0})-1)\,.
$$
Recall from Corollary~\ref{L1} that $\lambda_0=\omega(-\A)$ and that, if $\lambda_0=0$, then $P_{0}\phi$ belongs to the one-dimensional space $\mathrm{ker}(\A)$ consisting of equilibrium solutions to  \eqref{E1a}. Consequently, we obtain a characterization of the large-time behavior of solutions in dependence on $r(Q_0)$:

\begin{corollary}\label{stable}
Suppose $(A_1)-(A_5)$.  
\begin{itemize}
\item[{\bf (i)}]  If $r(Q_{0})<1$, then the zero equilibrium to \eqref{E1a} is globally exponentially asymptotically stable. 
\item[{\bf (ii)}] If $r(Q_{0})=1$, then  the zero equilibrium to \eqref{E1a} is stable. Moreover, the solution $u$ to  \eqref{E1a} with $\phi\in L_p(E_0)$ converges exponentially toward an equilibrium.
\item[{\bf (iii)}] If $r(Q_{0})>1$, then  the zero equilibrium to \eqref{E1a} is unstable. More precisely, the solution $u$ to  \eqref{E1a} with $\phi\in L_p(E_0)$ is asymptotic to the stable age distribution $e^{\lambda_0 t} P_{\lambda_0}\phi $ with $\lambda_0>0$ satisfying \eqref{lambda0} and $P_{\lambda_0}\phi$ being given by \eqref{PP}.
\end{itemize}
\end{corollary}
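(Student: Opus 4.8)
The plan is to read off the large-time behavior of solutions directly from Theorem~\ref{C14}, which gives us the exponential convergence $\|e^{-\lambda_0 t}\mS(t) - P_{\lambda_0}\|_{\ml(L_p(E_0))}\le Ne^{-\delta t}$, together with the sign relation $\sign(\lambda_0) = \sign(r(Q_0)-1)$ that follows from the construction of $\lambda_0$ in \eqref{lambda0} and the strict monotonicity of $\lambda\mapsto r(Q_\lambda)$ established in Corollary~\ref{L1}. The three cases correspond exactly to $\lambda_0<0$, $\lambda_0=0$, and $\lambda_0>0$.

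For part \textbf{(i)}, if $r(Q_0)<1$ then $\lambda_0<0$, and Theorem~\ref{C14} gives for $\phi\in L_p(E_0)$ that $\|\mS(t)\phi\|_{L_p(E_0)} = \|e^{\lambda_0 t}(e^{-\lambda_0 t}\mS(t)\phi)\|_{L_p(E_0)} \le e^{\lambda_0 t}(\|P_{\lambda_0}\|_{\ml(L_p(E_0))} + Ne^{-\delta t})\|\phi\|_{L_p(E_0)}$, which decays exponentially; hence the zero equilibrium is globally exponentially asymptotically stable. For part \textbf{(iii)}, if $r(Q_0)>1$ then $\lambda_0>0$, and Theorem~\ref{C14} rewrites as $\|\mS(t)\phi - e^{\lambda_0 t}P_{\lambda_0}\phi\|_{L_p(E_0)} \le Ne^{(\lambda_0-\delta) t}\|\phi\|_{L_p(E_0)}$. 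Since $P_{\lambda_0}\phi$ spans $\mathrm{ker}(\lambda_0+\A) = \mathrm{span}\{\Pi_{\lambda_0}(\cdot,0)\zeta_{\lambda_0}\}$, the solution $u(t)=\mS(t)\phi$ is asymptotic to the stable age distribution $e^{\lambda_0 t}P_{\lambda_0}\phi$, and the zero equilibrium is unstable because $e^{\lambda_0 t}\|P_{\lambda_0}\phi\|\to\infty$ whenever $P_{\lambda_0}\phi\ne 0$; instability in the full sense follows by noting that $P_{\lambda_0}\phi$ can be made nonzero by an arbitrarily small perturbation $\phi$ (e.g. take $\phi$ a small multiple of $\Pi_{\lambda_0}(\cdot,0)\zeta_{\lambda_0}$, for which $P_{\lambda_0}\phi=\phi$ by the projection property).

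For part \textbf{(ii)}, if $r(Q_0)=1$ then $\lambda_0=0$, so Theorem~\ref{C14} reads $\|\mS(t) - P_0\|_{\ml(L_p(E_0))}\le Ne^{-\delta t}$. In particular $\sup_{t\ge 0}\|\mS(t)\|_{\ml(L_p(E_0))}<\infty$, which gives Lyapunov stability of the zero equilibrium. Moreover, for any $\phi\in L_p(E_0)$ we have $\|u(t) - P_0\phi\|_{L_p(E_0)} = \|\mS(t)\phi - P_0\phi\|_{L_p(E_0)}\le Ne^{-\delta t}\|\phi\|_{L_p(E_0)}\to 0$ exponentially, and $P_0\phi\in\mathrm{ker}(\A)$ is an equilibrium solution of \eqref{E1a} (by Theorem~\ref{T2} / Corollary~\ref{C1a} with $\lambda_0=0$, $\mathrm{ker}(\A)$ is the one-dimensional space of equilibria as in Remark~\ref{R3}). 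This establishes exponential convergence to an equilibrium.

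There is no serious obstacle here: the whole corollary is a direct translation of Theorem~\ref{C14} plus the sign identity into the language of stability. The only points requiring a line of care are (a) spelling out what ``stable'' and ``unstable'' mean for the zero equilibrium and checking that the estimates of Theorem~\ref{C14} deliver them — in particular, for instability in (iii) one must exhibit arbitrarily small initial data whose solutions leave a fixed neighborhood, which is immediate from $P_{\lambda_0}$ being a nonzero projection; and (b) in (ii), identifying $P_0\phi$ as a genuine equilibrium, which is exactly the content of $\mathrm{ran}(P_0)=\mathrm{ker}(\A)$ together with Remark~\ref{R3}.
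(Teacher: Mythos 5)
Your proposal is correct and follows essentially the same route as the paper: Corollary~\ref{stable} is presented there as a direct consequence of Theorem~\ref{C14} together with the sign identity $\mathrm{sign}(\lambda_0)=\mathrm{sign}(r(Q_0)-1)$ and the observation that for $\lambda_0=0$ the projection $P_0\phi$ lies in $\mathrm{ker}(\A)$, i.e. is an equilibrium. Your added details (the decay estimate in (i), the boundedness of $\mS$ giving stability in (ii), and exhibiting small initial data along $\Pi_{\lambda_0}(\cdot,0)\zeta_{\lambda_0}$ for instability in (iii)) are exactly the routine verifications the paper leaves implicit.
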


Related results to Theorem~\ref{C14} with different approaches, not relying on maximal regularity of the diffusion term,  have been obtained previously elsewhere.  Based on  positive perturbations of semigroups in $L_1(E_0)$, a similar result has been shown in \cite{ThiemeDCDS} which has been recovered as a par\-ti\-cu\-lar case in  \cite{RhandiSchnaubelt_DCDS99} (also see \cite{Rhandi}), where time-dependent operators have been treated by means of perturbation techniques of Miyadera type. Note also that the general results of \cite{ThiemeDCDS} apply as well to other situations than the operator $A$ describing spatial diffusion. We also refer to \cite{AnitaAnita,MagalThieme} for the study of the large-time behavior of nonlinear models (see also below).

\begin{remark}\label{R0}
Some of the previous arguments in this section rely on the fact that the semigroup~$\mS$ is eventually compact which is no longer true if $a_m=\infty$. However, if $a_m=\infty$ it can be shown \cite[Lemma 3.2]{WalkerMOFM} (imposing slightly stricter assumptions, e.g. $\varpi>0$ in \eqref{10aa}) that the semigroup $\mS$ is \emph{quasi-compact} (see \cite{EngelNagel2} for a definition) and that $\lambda_0$ is still a dominant eigenvalue and a  simple pole of the resolvent of $-\A$ so that  \cite[Corollary 4.8]{EngelNagel2} implies the asynchronous exponential  growth of $\mS$. Thus,
Theorem~\ref{C14} remains true in the case $a_m=\infty$. For details we refer to \cite{WalkerMOFM}.
\end{remark}

It is worth noting that Theorem~\ref{C14} is the basis to investigate the asynchronous exponential growth for semilinear equations. More precisely, consider
\begin{subequations}\label{E1ab}
\begin{align}
&\partial_t u\,+\, \partial_au \, +\,     A(a)\,u\, =-\mu(u,a) u\ , && t>0\, ,\quad a\in (0,a_m)\ ,\label{1ab}\\ 
&u(t,0)\, =\, \int_0^{a_m}\beta(a)\, u(t,a)\, \rd a\ ,& & t>0\, ,  \label{2ab}\\
&u(0,a)\, =\,  \phi(a)\ , & &\hphantom{t>0\ ,\quad} a\in (0,a_m)\,,\label{3ab}
\end{align}
\end{subequations}
with a semilinear term on the right-hand side of \eqref{1ab} (representing a nonlinear death process). Suppose that the function $\mu=\mu(a,u)$ satisfies (we indicate for preciseness the interval $J$)
\Links
\begin{equation}\tag*{${\bf (B_1)}$}
\begin{split} 
&\mu: L_p(J,E_0)\rightarrow L_\infty\big(J,\mathcal{L}(E_0)\big)\,,\ u\mapsto \mu(u,\cdot)\ \text{is uniformly Lipschitz continuous} \\
&\text{on bounded sets with}\ \|\mu(u,\cdot)\|_{L_\infty(J,\mathcal{L}(E_0))}\le f(\|u\|_{L_p(J,E_0)})\,,\ u\in L_p(J,E_0)\,,
\end{split}
\end{equation}
\Rechts
where the function $f$ satisfies
\Links
\begin{equation}\tag*{${\bf (B_2)}$}
\begin{split} 
&f :\R^+\rightarrow\R^+ \ \text{ is nonincreasing and }\ \displaystyle\int^\infty \frac{f(r)}{r}\,\rd r <\infty\,.
\end{split}
\end{equation}
\Rechts
Then, given $\phi\in L_p(E_0)$, there is a unique  mild solution $u\in C(\R^+,L_p(E_0))$ to \eqref{E1ab}. We introduce the nonlinear semigroup $\mathbb{T}$ by setting $\mathbb{T}(t)\phi:=u(t)$ and put
$$
Q_{\lambda_0}(\phi):= P_{\lambda_0}\left( \phi+\int_0^\infty e^{-\lambda_0 s} F\big(\mathbb{T}(s)\phi\big)\,\rd s \right)\,,
$$
where $F(v):=-\mu(v,\cdot)v$.
The next result now immediately follows from \cite[Theorem 1.1, Theorem 1.3]{GyllenbergWebb}:

\begin{corollary}
Suppose $(A_1)-(A_5)$ and $(B_1)-(B_2)$. Let $r(Q_0)>1$, that is,  $\lambda_0>0$ in \eqref{lambda0}.  There are $\delta>0$ and $N\ge 1$ such that, if $\mathbb{T}(\cdot)\phi\in C(\R^+,L_p(E_0))$ denotes the mild solution to \eqref{E1ab} for a given $\phi\in L_p(E_0)$, then
$$
\|e^{-\lambda_0 t}\, \mathbb{T}(t)\phi- Q_{\lambda_0}(\phi)\|_{\ml(L_p(E_0))}\le N e^{-\delta t}\|\phi\|_{L_p(E_0)}\,,\quad t\ge 0\,.
$$
\end{corollary}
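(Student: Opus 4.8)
The plan is to cast \eqref{E1ab} as a semilinear perturbation of the linear problem \eqref{E1a} and then to invoke the abstract asynchronous exponential growth results of Gyllenberg and Webb. First I would write $F(v):=-\mu(v,\cdot)v$ and record that, by $(B_1)$ together with the mild formulation, the solution $\mathbb{T}(t)\phi$ to \eqref{E1ab} satisfies the variation-of-constants identity
\begin{equation*}
\mathbb{T}(t)\phi=\mS(t)\phi+\int_0^t\mS(t-s)\,F\big(\mathbb{T}(s)\phi\big)\,\rd s\,,\qquad t\ge 0\,,
\end{equation*}
where $\mS$ is the linear age-diffusion semigroup from Theorem~\ref{T1}. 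Global existence and uniqueness of this mild solution on $\R^+$ follow from $(B_1)$ (local Lipschitz continuity on bounded sets) together with the \emph{a priori} bound $\|\mu(u,\cdot)\|_{L_\infty(J,\mathcal{L}(E_0))}\le f(\|u\|_{L_p(E_0)})$; since $f$ is nonincreasing the effective nonlinear death rate stays controlled, so a standard continuation argument rules out blow-up and gives $\mathbb{T}\in C(\R^+,L_p(E_0))$ as asserted in the text preceding the statement.

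Next I would assemble the hypotheses needed to apply \cite[Theorem 1.1, Theorem 1.3]{GyllenbergWebb}. On the linear side, Theorem~\ref{C14} supplies exactly what their framework requires: under $(A_1)$--$(A_5)$ the semigroup $\mS$ has asynchronous exponential growth with intrinsic growth constant $\lambda_0$ determined by \eqref{lambda0}, with $\lambda_0$ a simple dominant eigenvalue of $-\A$ and a first-order pole of the resolvent, and with associated rank-one spectral projection $P_{\lambda_0}$ given by \eqref{PP}; moreover $r(Q_0)>1$ guarantees $\lambda_0>0$. On the nonlinear side one has to verify the smallness-at-infinity condition on $F$: estimating in $L_p(E_0)$,
\begin{equation*}
\|F(v)\|_{L_p(E_0)}=\|\mu(v,\cdot)\,v\|_{L_p(E_0)}\le\|\mu(v,\cdot)\|_{L_\infty(J,\mathcal{L}(E_0))}\,\|v\|_{L_p(E_0)}\le f\big(\|v\|_{L_p(E_0)}\big)\,\|v\|_{L_p(E_0)}\,,
\end{equation*}
so that $(B_2)$ --- $f$ nonincreasing with $\int^\infty f(r)/r\,\rd r<\infty$ --- is precisely the hypothesis under which the Gyllenberg--Webb results conclude that the perturbed nonlinear semigroup $\mathbb{T}$ inherits asynchronous exponential growth with the \emph{same} exponent $\lambda_0$ and limiting nonlinear projection
\begin{equation*}
Q_{\lambda_0}(\phi)=P_{\lambda_0}\Big(\phi+\int_0^\infty e^{-\lambda_0 s}\,F\big(\mathbb{T}(s)\phi\big)\,\rd s\Big)\,.
\end{equation*}
Their Theorem~1.1 yields the convergence $e^{-\lambda_0 t}\mathbb{T}(t)\phi\to Q_{\lambda_0}(\phi)$, and Theorem~1.3 upgrades it to the exponential rate $Ne^{-\delta t}\|\phi\|_{L_p(E_0)}$, with an admissible $\delta>0$ furnished by the spectral gap: $\mathrm{dist}\big(\lambda_0,\sigma(-\A)\setminus\{\lambda_0\}\big)>0$, which is finite and positive since $\mS$ is eventually compact (cf. Corollary~\ref{C1a}, Corollary~\ref{stable}).

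The step I expect to be the main obstacle is verifying that $\int_0^\infty e^{-\lambda_0 s}F(\mathbb{T}(s)\phi)\,\rd s$ converges in $L_p(E_0)$, which is needed both for $Q_{\lambda_0}(\phi)$ to be well defined and for the Gyllenberg--Webb hypotheses to be literally met. This requires an \emph{a priori} growth bound of the form $\|\mathbb{T}(s)\phi\|_{L_p(E_0)}\le C\,e^{\lambda_0 s}\|\phi\|_{L_p(E_0)}$: feeding the exponential bound $\|\mS(t)\|_{\ml(L_p(E_0))}\le M e^{\lambda_0 t}$ (valid since $e^{-\lambda_0 t}\mS(t)$ converges by Theorem~\ref{C14}) and the estimate $\|F(v)\|\le f(\|v\|)\|v\|$ into the Duhamel identity and running a Gronwall-type argument, the integral condition $\int^\infty f(r)/r\,\rd r<\infty$ is exactly what prevents the bound from degrading to $e^{(\lambda_0+\varepsilon)s}$. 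With this in hand, $\|e^{-\lambda_0 s}F(\mathbb{T}(s)\phi)\|_{L_p(E_0)}\le C\,f(c\,e^{\lambda_0 s})$, whose integrability over $s\in[0,\infty)$ follows from $(B_2)$ via the substitution $r=c\,e^{\lambda_0 s}$. Beyond this estimate, \eqref{smoothing}, and Theorem~\ref{C14}, the remaining work is purely a matter of matching the notation of the present setting to that of \cite{GyllenbergWebb}; no genuinely new ingredient is needed.
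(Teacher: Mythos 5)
Your proposal follows essentially the same route as the paper: the result is obtained by feeding the linear asynchronous exponential growth from Theorem~\ref{C14} (with projection $P_{\lambda_0}$ and $\lambda_0>0$ from $r(Q_0)>1$) together with the hypotheses $(B_1)$--$(B_2)$ on $F(v)=-\mu(v,\cdot)v$ directly into \cite[Theorem 1.1, Theorem 1.3]{GyllenbergWebb}. The additional verifications you sketch (Duhamel formula, the bound $\|F(v)\|\le f(\|v\|)\|v\|$, convergence of the integral defining $Q_{\lambda_0}(\phi)$) are exactly the hypotheses those cited theorems are designed to absorb, so no genuinely different argument is involved.
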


Thus, asynchronous exponential growth also occurs in the nonlinear model \eqref{E1ab}.

\section{Equilibria for Nonlinear Models}\label{Sec3}

In Remark~\ref{R3} and Corollary~\ref{stable} we have seen that the linear problem \eqref{E1a} admits a non-trivial positive equilibrium solution if and only if the condition $r(Q_0)=1$ is met for the operator $Q_0$ defined in \eqref{Qlambda}.
This condition is very restrictive since it is satisfied only in special cases. One would, however, intuitively expect that equilibria should exist in many populations. One is thus led to investigate nonlinear models that feature nontrivial positive equilibrium solutions under natural assumptions. 
In many situations the birth rate $\beta$ (and death rate $\mu$) but also the spatial migration depend on the (e.g. total) population itself, and the corresponding equilibrium equations become
\begin{subequations}\label{E1ae}
\begin{align}
&\partial_au \, +\,     A(u,a)\,u\, =0\, , \qquad a\in (0,a_m)\, ,\label{1ae}\\ 
&u(0)\, =\, \int_0^{a_m}\beta(u,a)\, u(a)\, \rd a\, ,  \label{2ae}
\end{align}
\end{subequations}
where we recall that the death modulus is included in the operator $A$. We emphasize that \eqref{E1ae} is a quasilinear (parabolic) equation subject to a nonlocal and nonlinear initial condition and that it is posed on an {\it a priori} given interval of existence.

One of the main difficulties in this context lies in the fact that $u\equiv 0$ is always a solution to \eqref{E1ae}, hence any method for constructing equilibria has to rule out that one hits this trivial equilibrium.
We are presenting herein two possible approaches to construct nontrivial positive equilibrium solutions: a fixed point argument \cite{WalkerJDE10}  in Section~\ref{Sec3.1} and a bifurcation approach \cite{DelgadoEtAl2,WalkerSIMA09,WalkerJDE10,WalkerJDDE} in Section~\ref{Sec3.2}. Of course, there are also other possible approaches, e.g. see \cite{DS3} for an application of the sub-supersolution method for a model with linear diffusion. We  refrain from considering the well-posedness of the nonlinear version of \eqref{E1a} (that is, the time-dependent version of \eqref{E1ae}), but refer to \cite{WalkerDCDSA10} for a rather thorough treatment of this issue.

\subsection{Equilibria: Application of a Fixed Point Argument in Conical Shells}\label{Sec3.1}

A promising technique to prove the existence of nontrivial positive solutions to \eqref{E1ae} is the use of a fixed point argument in conical shells \cite{AmannSIAM}. Such an approach was first applied in the spatially homogeneous case in \cite[Theorem 1]{Pruess83} (see also \cite[Theorem 4.1]{WebbBook}). Later it was carried over in \cite[Theorem 3.1]{WalkerJDE10} to the spatially inhomogeneous ``quasilinear'' case \eqref{E1ae}. Differently from \cite{WalkerJDE10}, where the result was proven in the phase space $L_1(E_0)$, we present here the result in a functional setting based on maximal regularity, that is, in $L_p(E_0)$ with $p\in (1,\infty)$, which  will also be used in the subsequent section.\\

Let $p\in (1,\infty)$ be fixed. We recall the definition of $E_\varsigma$ in \eqref{Esigma} and suppose that its positive cone has nonempty interior\footnote{Recall that $\mathrm{int}(E_\varsigma^+)\not=\emptyset$ consists exactly of the quasi-interior points if $E_\varsigma$.}, i.e.
\Links
\begin{equation}\tag*{${\bf (A_6)}$}
\begin{split} 
\mathrm{int}(E_\varsigma^+)\not=\emptyset\,.
\end{split}
\end{equation}
\Rechts
We also assume that there is a Banach space $X$ such that
\Links
\begin{equation}\tag*{${\bf (A_7)}$}
\begin{split} 
\bbWp \dhr X\hookrightarrow L_p(E_0)
\end{split}
\end{equation}
\Rechts
and  
\Links
\begin{equation}\tag*{${\bf (A_8)}$}
\begin{split} 
 A: X\rightarrow \mathcal{L}\big( \bbWp, L_p(E_0)\big)\,,\ u\mapsto A(u) \quad \text{is continuous}\,,
\end{split}
\end{equation}
\Rechts
where  $A(u):=A(u,\cdot)$ is supposed to have maximal $L_p$-regularity, that is,
\Links
\begin{equation}\tag*{${\bf (A_9)}$}
\begin{split} 
(\partial_a+A(u),  \gamma_0)\in \mathrm{Isom}\big(\bbWp, L_p(E_0)\times E_{\varsigma}\big)\,, \quad u\in X\,. 
\end{split}
\end{equation}
\Rechts
This implies that the map
$$
T: X \rightarrow \ml\big (L_p(E_0)\times E_{\varsigma},\bbWp\big)\,,\quad 
u\mapsto T[u]:=\big(\partial_a+A(u),\gamma_0\big)^{-1}
$$
is continuous due to continuity of the inversion map~\mbox{$B\mapsto B^{-1}$} for linear operators. In particular, setting $$\Pi[u]:=T[u](0,\cdot)\in \ml\big(E_\varsigma,\bbWp\big)\,,\quad u\in X\,,$$
we deduce that
\bqn\label{Pi}
\Pi:X\rightarrow \ml\big(E_\varsigma,\bbWp\big)\ \text{is continuous}
\eqn
and, for any $u\in X$ and $\varphi\in E_{\varsigma}$,
$$
v:=\Pi[u]\varphi\in \bbWp
$$ 
is the unique solution  to the linear Cauchy problem
$$
\partial_a v +A(u,a) v=0\,,\quad a\in J\,,\qquad v
(0)=\varphi\,.
$$
It is convenient to write
$\Pi[u](a)\varphi:=(\Pi[u]\varphi)(a)$ for $a\in J$, $\varphi\in E_\varsigma$, and $u\in X$.
Hence $\Pi[u](a)$ is the nonlinear analogue to $\Pi(a,0)$ from the previous section. In the following we set 
$$
X^+:=X\cap L_p^+(E_0)\quad \text{and}\quad \bbWpp:=\bbWp\cap L_p^+(E_0)\,.
$$
We then suppose that the above Cauchy problem admits positive solutions for positive initial values in the sense that
\Links
\begin{equation}\tag*{${\bf (A_{10})}$}
\begin{split} 
\Pi[u]\in \ml_+\big(E_\varsigma,\bbWp\big)\ ,\quad u\in X^+\ ,
\end{split}
\end{equation}
\Rechts
that is, $\Pi[u]$ maps the positive cone $E_\varsigma^+$ into the positive cone $\bbWpp$. As for the birth modulus we assume that (with $1/p+1/p'=1$)
\Links
\begin{equation}\tag*{${\bf (A_{11})}$}
\begin{split} 
\beta: X\rightarrow L_{p'}(\ml_+(E_1,E_\vartheta))\,,\ u\mapsto \beta(u) \quad \text{is continuous for some $\vartheta\in (\varsigma,1]$ .}
\end{split}
\end{equation}
\Rechts
We further assume that:
\Links
\begin{equation}\tag*{${\bf (A_{12})}$}
\begin{split} 
&\text{for each $u\in X^+$, the operator }\ \beta(u,a)\Pi[u](a)\in\ml_+(E_\varsigma) \ \text{is}\\
&\text{strongly positive for $a$ in a subset of $J$ of positive measure}\, .
\end{split}
\end{equation}
\Rechts
It is worth noting that, though the assumptions above are technical, they are not restrictive and satisfied in many applications (see Section~\ref{Sec4}). We also point out that all the results presented in this 
Section~\ref{Sec3} include the case $a_m=\infty$ (without additional assumptions). 

Now, analogously to the previous section we introduce
\begin{equation}\label{Qu}
Q[u]:=\int_0^{a_m} \beta(u,a) \Pi[u](a)\,\rd a \,,\quad u\in X\,,
\end{equation}
and obtain from \eqref{Pi} and $(A_{11})$ that
\begin{equation}\label{Qucont}
Q:  X\rightarrow  \mathcal{L}(E_\varsigma, E_\vartheta)\ \text{is continuous}
\end{equation} 
while $(A_{12})$ and \eqref{dhr}  imply that
\begin{equation}\label{Qucomp}
Q[u]\in \mathcal{K}(E_\varsigma)\ \text{is strongly positive for each $u\in X^+$}\,.
\end{equation}
Hence, the Kre\u{\i}n-Rutman Theorem \cite[Theorem 3.2]{AmannSIAM} yields the analogue of Lemma~\ref{L0}.

\begin{lemma}\label{L0a} Suppose $(A_6)-(A_{12})$. For $u\in X^+$,  the spectral radius $r(Q[u])$ is positive and a simple eigenvalue of $Q[u]\in \mathcal{L}(E_\varsigma)$ with an  eigenvector $\zeta_u \in \mathrm{int}(E_\varsigma^+)$. It is the only eigenvalue of $Q[u]$ with a positive eigenvector. Moreover, for every $ \psi\in E_\varsigma^+\setminus\{0\}$, the equation
$(\xi-Q[u])\phi=\psi$
has exactly one positive solution $\phi$ if $\xi > r(Q[u])$ and no positive solution $\phi$ for $\xi \le r(Q[u])$.
\end{lemma}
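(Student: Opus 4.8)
The plan is to derive all assertions from the Kre\u{\i}n--Rutman Theorem applied to the operator $Q[u]$ on $E_\varsigma$, following verbatim the argument behind Lemma~\ref{L0}. Fix $u\in X^+$. By \eqref{Qucomp} the operator $Q[u]\in\mathcal{K}(E_\varsigma)$ is strongly positive, and $(A_6)$ guarantees $\mathrm{int}(E_\varsigma^+)\ne\emptyset$, so the hypotheses of \cite[Theorem 3.2]{AmannSIAM} (or \cite[Theorem 12.3, Corollary 12.4]{DanersKochMedina}) are met. This directly yields that $r(Q[u])>0$ is a simple eigenvalue of $Q[u]$ with eigenvector $\zeta_u\in\mathrm{int}(E_\varsigma^+)$, that it is the only eigenvalue admitting a positive eigenvector, and -- a fact also needed below -- that $r(Q[u])$ is an eigenvalue of the dual operator $Q[u]'\in\mathcal{L}(E_\varsigma')$ with eigenfunctional $\zeta_u'\in(E_\varsigma')^+\setminus\{0\}$.

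It remains to analyse $(\xi-Q[u])\phi=\psi$ with $\psi\in E_\varsigma^+\setminus\{0\}$. Suppose first $\xi>r(Q[u])$. Then $\xi$ lies in the resolvent set, so $\phi:=(\xi-Q[u])^{-1}\psi$ is the unique solution in $E_\varsigma$; positivity follows from the Neumann series $(\xi-Q[u])^{-1}=\sum_{n\ge 0}\xi^{-n-1}Q[u]^n$, which converges in $\mathcal{L}(E_\varsigma)$ since $r(\xi^{-1}Q[u])=\xi^{-1}r(Q[u])<1$ and whose summands all lie in $\ml_+(E_\varsigma)$, whence $(\xi-Q[u])^{-1}\in\ml_+(E_\varsigma)$ -- in fact $\phi\in\mathrm{int}(E_\varsigma^+)$ because $Q[u]$ is strongly positive and $\psi\ne 0$. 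Thus there is exactly one positive solution.

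Now suppose $\xi\le r(Q[u])$ and, for contradiction, that $\phi\in E_\varsigma^+$ solves $(\xi-Q[u])\phi=\psi$. Testing against $\zeta_u'$ and using $Q[u]'\zeta_u'=r(Q[u])\zeta_u'$ gives
$$
\langle\zeta_u',\psi\rangle=\langle\zeta_u',(\xi-Q[u])\phi\rangle=\bigl(\xi-r(Q[u])\bigr)\,\langle\zeta_u',\phi\rangle\le 0 .
$$
On the other hand, strong positivity of $Q[u]$ makes $\zeta_u'$ strictly positive on $E_\varsigma^+\setminus\{0\}$: for $z\in E_\varsigma^+\setminus\{0\}$ the element $Q[u]z$ is quasi-interior in $E_\varsigma^+$, so $r(Q[u])\langle\zeta_u',z\rangle=\langle\zeta_u',Q[u]z\rangle>0$; with $z=\psi$ this forces $\langle\zeta_u',\psi\rangle>0$, a contradiction. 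Hence no positive solution exists when $\xi\le r(Q[u])$, and the proof is complete.

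Concerning difficulty: there is no genuine obstacle in this lemma. All the substantive work is upstream, in establishing \eqref{Qucomp} -- the compactness and strong positivity of $Q[u]$ -- which rests on the maximal-regularity assumptions $(A_8)$, $(A_9)$, $(A_{10})$, the embedding $(A_7)$, the positivity in $(A_{11})$, $(A_{12})$, and the compact embedding \eqref{dhr}. Once $Q[u]$ is known to be a compact, strongly positive operator on the ordered Banach space $E_\varsigma$ with nonempty cone interior, Lemma~\ref{L0a} is an immediate transcription of the reasoning for Lemma~\ref{L0}; the only point meriting a little attention is to invoke the form of the Kre\u{\i}n--Rutman Theorem that additionally supplies the positive dual eigenfunctional $\zeta_u'$ used in the nonexistence part.
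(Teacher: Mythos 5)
Your proposal is correct and follows the paper's approach: the paper's entire proof of Lemma~\ref{L0a} consists of noting that \eqref{Qucomp} makes $Q[u]$ compact and strongly positive on $E_\varsigma$ (with $\mathrm{int}(E_\varsigma^+)\ne\emptyset$ by $(A_6)$) and then citing the Kre\u{\i}n--Rutman theorem in Amann's form \cite[Theorem 3.2]{AmannSIAM}, which already contains the solvability dichotomy for $(\xi-Q[u])\phi=\psi$ as part of its statement. Your additional hand-proof of that dichotomy via the Neumann series and the positive dual eigenfunctional is a correct (and harmless) re-derivation of what the cited theorem provides directly.
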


\noindent With the notation introduced above we deduce that any solution $u$ to \eqref{E1ae} in $\bbWp$ with $\varphi=u(0)$ (in $E_\vartheta$) satisfies
\begin{equation}\label{equi}
u(a)=\Pi[u](a) \varphi \,,\quad a\in J\,,\qquad \varphi =Q[u]\varphi\,,
\end{equation}
or, equivalently, is a fixed point of the map
$$
\Gamma: \bbWp \times E_\vartheta \rightarrow \bbWp\times E_\vartheta\,,\quad (u,\varphi) \mapsto \big(\Pi[u]\varphi, Q[u]\varphi\big)\,.
$$
Invoking \eqref{Qucont}, \eqref{Pi}, $(A_7)$, and \eqref{dhr} it readily follows that $\Gamma$ is a compact mapping. To apply the fixed point theorem in conical shells we impose the following crucial conditions:
\Links
\begin{equation}\tag*{${\bf (A_{13})}$}
\begin{split} 
&\text{there is $\tau_0>0$ such that $r(Q[u])\ge 1$ for}\\
&\text{$u\in \bbWpp$ with $0<\|u\|_{\bbWp}<\tau_0$}
\end{split}
\end{equation}
\Rechts
and
\Links
\begin{equation}\tag*{${\bf (A_{14})}$}
\begin{split} 
&\text{there is $\tau_1>0$ with $\tau_0\not=\tau_1$ such that $r(Q[u])\le 1$}\\
&\text{for $u\in \bbWpp$ with $\|u\|_{\bbWp}>\tau_1$.}
\end{split}
\end{equation}
\Rechts
Roughly speaking, assumption $(A_{13})$ can be interpreted as that the population has to increase when too small while it has to decrease when too large according to assumption $(A_{14})$.  
Under these assumptions we can prove that there is at least one nontrivial positive solution to the equilibrium equation \eqref{E1ae}:

\begin{theorem}[Equilibria in Conical Shells]
Assume $(A_6)-(A_{14})$. Then there is at least one nontrivial positive solution $u\in \bbWpp$ to \eqref{E1ae}.
\end{theorem}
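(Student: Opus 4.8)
The plan is to realise the nontrivial equilibria of \eqref{E1ae} as nontrivial fixed points of the map $\Gamma$ on the positive cone $K:=\bbWpp\times E_\vartheta^+$ of the Banach space $W:=\bbWp\times E_\vartheta$, normed by $\|(u,\varphi)\|_W:=\|u\|_{\bbWp}+\|\varphi\|_{E_\vartheta}$, and to compute the fixed point index of $\Gamma$ over a small and a large ball in $K$. Using \eqref{Pi}, \eqref{Qucont}, $(A_7)$, and the compact embeddings $\bbWp\dhr X$ and $E_\vartheta\dhr E_\varsigma$ (which turn $\Pi[u]$ and $Q[u]$ into compact operators $E_\vartheta\to\bbWp$, resp.\ $E_\vartheta\to E_\vartheta$) one checks that $\Gamma$ is compact, and $(A_{10})$, $(A_{11})$ show that $\Gamma$ maps $K$ into $K$. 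By \eqref{equi}, $(u,\varphi)\in K$ is a fixed point of $\Gamma$ precisely when $u=\Pi[u]\varphi$ and $\varphi=Q[u]\varphi$; applying $\gamma_0$ to the first identity and using $\gamma_0\Pi[u]=\mathrm{id}_{E_\varsigma}$ gives $\varphi=\gamma_0u=u(0)$, so such a $u$ solves \eqref{E1ae} and lies in $\bbWpp$. Since $u=0$ forces $\varphi=\gamma_0u=0$, any fixed point lying on a sphere $\{\|(u,\varphi)\|_W=\rho\}$ with $\rho>0$ is automatically nontrivial, and it suffices to produce one fixed point of $\Gamma$ in $K\setminus\{(0,0)\}$.

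For a small radius $r\in(0,\tau_0)$ I would show $i(\Gamma,B_r\cap K,K)=0$. Fixing any $\psi\in E_\vartheta^+\setminus\{0\}$, the relevant criterion from \cite{AmannSIAM} requires that $(u,\varphi)-\Gamma(u,\varphi)\neq t\,(0,\psi)$ for all $(u,\varphi)\in\partial B_r\cap K$ and all $t\ge0$; if this fails with $t=0$ we already have a nontrivial fixed point on $\partial B_r$ and are done. If it fails with $t>0$, then $u=\Pi[u]\varphi$, hence $\varphi=\gamma_0u$, which is nonzero (otherwise the second component reads $0=t\psi$) and satisfies $0<\|u\|_{\bbWp}\le r<\tau_0$; thus $(A_{13})$ gives $r(Q[u])\ge1$, while $(1-Q[u])\varphi=t\psi\in E_\varsigma^+\setminus\{0\}$ exhibits a positive solution $\varphi$, contradicting the last assertion of Lemma~\ref{L0a}.

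For a large radius $R$ I would show $i(\Gamma,B_R\cap K,K)=1$, i.e.\ that $\Gamma(u,\varphi)\neq\mu\,(u,\varphi)$ for $(u,\varphi)\in\partial B_R\cap K$ and $\mu\ge1$ with $\mu\neq1$ (the case $\mu=1$ again yields a nontrivial fixed point on $\partial B_R$). Suppose $\Gamma(u,\varphi)=\mu(u,\varphi)$ with $\mu\ge1$. Then $\varphi\neq0$ (else $\mu u=\Pi[u]0=0$ and $(u,\varphi)=(0,0)$), $\Pi[u]\varphi=\mu u$ gives $\varphi=\mu\,\gamma_0u$, and $Q[u]\varphi=\mu\varphi$ together with Lemma~\ref{L0a} gives $\mu=r(Q[u])$. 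The crux is the resulting a priori bound: from $\varphi=\tfrac1\mu Q[u]\varphi$ and the trace estimate \eqref{gamma0} (with $c_\gamma:=\|\gamma_0\|_{\ml(\bbWp,E_\varsigma)}$), $\|\varphi\|_{E_\vartheta}\le\tfrac1\mu\|Q[u]\|_{\ml(E_\varsigma,E_\vartheta)}\|\varphi\|_{E_\varsigma}=\|Q[u]\|_{\ml(E_\varsigma,E_\vartheta)}\|\gamma_0u\|_{E_\varsigma}\le c_\gamma\|Q[u]\|_{\ml(E_\varsigma,E_\vartheta)}\|u\|_{\bbWp}$, so $R=\|(u,\varphi)\|_W\le\big(1+c_\gamma\|Q[u]\|_{\ml(E_\varsigma,E_\vartheta)}\big)\|u\|_{\bbWp}$. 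Since $\{u\in\bbWp:\|u\|_{\bbWp}\le\tau_1\}$ is relatively compact in $X$ by $(A_7)$ and $Q\colon X\to\ml(E_\varsigma,E_\vartheta)$ is continuous by \eqref{Qucont}, the number $c':=\sup_{\|u\|_{\bbWp}\le\tau_1}\|Q[u]\|_{\ml(E_\varsigma,E_\vartheta)}$ is finite; choosing $R>(1+c_\gamma c')\,\tau_1$ therefore forces $\|u\|_{\bbWp}>\tau_1$, and then $(A_{14})$ gives $r(Q[u])\le1$, i.e.\ $\mu=1$, as needed.

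Granting these two index values, the excision and additivity properties of the fixed point index give $i\big(\Gamma,(B_R\setminus\overline{B_r})\cap K,K\big)=1-0\neq0$, so $\Gamma$ has a fixed point $(u,\varphi)$ with $r<\|(u,\varphi)\|_W<R$; hence $(u,\varphi)\neq(0,0)$ and $u\in\bbWpp\setminus\{0\}$ solves \eqref{E1ae}. (If a fixed point turns up on $\partial B_r$ or $\partial B_R$ along the way, the conclusion holds directly.) I expect the only genuine difficulty to be the large-radius a priori estimate — converting the product-norm size of $(u,\varphi)$ into a real lower bound for $\|u\|_{\bbWp}$ so that $(A_{14})$ becomes applicable; this is precisely where the regularising surplus $\vartheta>\varsigma$ in $(A_{11})$, which also underlies the compactness of $\Gamma$ on $\bbWp\times E_\vartheta$, is used. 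The small-radius step and the index bookkeeping are then routine.
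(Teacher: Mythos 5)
Your proposal is correct and follows essentially the same route as the paper: the same compact map $\Gamma(u,\varphi)=(\Pi[u]\varphi,Q[u]\varphi)$ on the cone in $\bbWp\times E_\vartheta$, with $(A_{13})$ and Lemma~\ref{L0a} ruling out $(u,\varphi)-\Gamma(u,\varphi)=t(0,\psi)$ on a small sphere and $(A_{14})$ ruling out $\Gamma(u,\varphi)=\mu(u,\varphi)$, $\mu>1$, on a large sphere. The only cosmetic difference is that you unpack Amann's conical-shell theorem into the two fixed point index computations and obtain the large-radius a priori bound via $\sup_{\|u\|_{\bbWp}\le\tau_1}\|Q[u]\|_{\ml(E_\varsigma,E_\vartheta)}$ (finite by $(A_7)$ and \eqref{Qucont}), whereas the paper cites \cite[Theorem 12.3]{AmannSIAM} directly and bounds $\|\varphi\|_{E_\vartheta}$ through the explicit estimate with $\|\beta(u)\|_{L_{p'}(\ml(E_1,E_\vartheta))}$.
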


\begin{proof}
We aim at applying the fixed point theorem from \cite[Theorem 12.3]{AmannSIAM}. 
For the sake of brevity, let $\mathbb{W}_p:=\bbWp$ and let $\bar B_R$ denote the closed ball in $\mathbb{W}_p^+\times E_\vartheta^+$  centered at the origin with radius
$$
R:=\tau_1\left(1+ \sup\left\{\|\beta(v)\|_{L_{p'}(\mathcal{L}(E_1,E_\vartheta))}\,;\, v\in \mathbb{W}_p^+\,,\, \|v\|_{\mathbb{W}_p}\le \tau_1\right\}\right) +\tau_0 <\infty\,.
$$
Then $\Gamma:\bar B_R\rightarrow \mathbb{W}_p^+\times E_\vartheta^+$ is compact. To check condition (i) from \cite[Theorem 12.3]{AmannSIAM} suppose for contradiction that there is $\lambda>1$ and $(u,\varphi)\in \bar B_R$ with
$$
\|u\|_{\mathbb{W}_p}+\|\varphi\|_{E_\vartheta} =R\quad\text{and}\quad \Gamma(u,\varphi)=\lambda (u,\varphi)\,.
$$
Then, by definition of $\Gamma$,
\bqn\label{r1}
\lambda u=\Pi[u]\varphi\,,\quad \lambda \varphi =Q[u]\varphi\,,
\eqn
and, in particular, $\varphi\in E_\vartheta^+\setminus\{0\}$ is an eigenvector of the operator $Q[u]$ to the eigenvalue $\lambda>1$. Thus $r(Q[u])>1$ so that $(A_{14})$ implies that 
\bqn\label{r2}
\|u\|_{L_p(E_1)}\le \|u\|_{\mathbb{W}_p} \le \tau_1\,.
\eqn
But then we deduce from \eqref{r1} and \eqref{r2} that
\begin{equation*}
\begin{split}
 R&=\|u\|_{\mathbb{W}_p}+\|\varphi\|_{E_\vartheta} = \|u\|_{\mathbb{W}_p}+\|\lambda^{-1} Q[u]\varphi\|_{E_\vartheta} \\
& = \|u\|_{\mathbb{W}_p}+\left\|\frac{1}{\lambda}\int_0^{a_m} \beta(u,a) \Pi[u](a) \varphi\,\rd a\right\|_{E_\vartheta} \\
& \le \tau_1 + \int_0^{a_m} \|\beta (u,a)\|_{\mathcal{L}(E_1,E_\vartheta)} \|u(a)\|_{E_1}\,\rd a\\
& \le \tau_1+ \|\beta(u)\|_{L_{p'}(\mathcal{L}(E_1,E_\vartheta))} \| u\|_{L_p(E_1)}\\
&\le \tau_1\left(1+ \sup\left\{\|\beta(v)\|_{L_{p'}(\mathcal{L}(E_1,E_\vartheta))}\,;\, v\in \mathbb{W}_p^+\,,\, \|v\|_{\mathbb{W}_p}\le \tau_1\right\}\right)
\end{split}
\end{equation*}
contracting the fact that $R$ was chosen strictly larger than the right-hand side. To check condition (ii) from \cite[Theorem 12.3]{AmannSIAM} fix any $\psi\in\mathrm{int}(E_\varsigma^+)$ and suppose for contradiction that there is $\lambda>0$ and $(u,\varphi)\in \bar B_R$ with
$$
\|u\|_{\mathbb{W}_p}+\|\varphi\|_{E_\vartheta} =\tau_0\quad\text{and}\quad (u,\varphi)-\Gamma(u,\varphi)=\lambda (0,\psi)\,.
$$
Then, again by definition of $\Gamma$,
$$
u=\Pi[u]\varphi \ \text{ with }\ \|u\|_{\mathbb{W}_p}<\tau_0 \ \text{ and }\ (1-Q[u])\varphi=\lambda\psi\,.
$$
The second equation implies that $r(Q[u])<1$ according to Lemma~\ref{L0a} since both $\varphi$ and $\psi$ are nontrivial elements of $E_\varsigma^+$. But this contradicts assumption $(A_{13})$. Consequently, \cite[Theorem 12.3]{AmannSIAM} implies that $\Gamma$ has at least one fixed point $(u,\varphi)\in \bar B_R$ with
$$
\min\{\tau_0,\tau_1\} < \|u\|_{\mathbb{W}_p}+\|\varphi\|_{E_\vartheta} < \max\{\tau_0,\tau_1\}\,.
$$
This proves the assertion.
\end{proof}

\subsection{Equilibria: A Bifurcation Approach}\label{Sec3.2}

The fact that $u\equiv 0$ is always a solution to \eqref{E1ae} has the advantage that the problem of finding nontrivial positive solutions can also be interpreted as a bifurcation problem, possibly giving more insight into the structure of the equilibria set. 
For this we write the birth modulus in the form $$\eta \beta(u,a)$$ and introduce in this way a bifurcation parameter $\eta>0$ which determines the intensity of the individual's fertility\footnote{Similarly, one can introduce a bifurcation parameter in the death modulus to construct nontrivial equilibria~\cite{DelgadoEtAl2, WalkerAMPA11}.} while the qualitative structure of the fertility  is modeled by the function $\beta$. This approach was introduced for purely age-structured models in \cite{CushingI} (see also \cite{CushingII,CushingIII}) and then for the spatially inhomogeneous quasilinear setting  \eqref{E1ae} in \cite{WalkerSIMA09, WalkerJDE10, WalkerJDDE}. The equilibrium problem now reads 
\begin{subequations}\label{E1aee}
\begin{align}
&\partial_au \, +\,     A(u,a)\,u\, =0\, , \qquad a\in (0,a_m)\, ,\label{1aee}\\ 
&u(0)\, =\, \eta \int_0^{a_m}\beta(u,a)\, u(a)\, \rd a\, ,  \label{2aee}
\end{align}
\end{subequations}
with parameter $\eta>0$. Independent of its value, $u\equiv 0$ is still a solution. \\

We shall impose the same assumptions $(A_6)-(A_{12})$ as before, but strengthen $(A_8)$ and $(A_{11})$ slightly by assuming differentiability of the maps:
\Links
\begin{equation}\tag*{${\bf (A_8')}$}
\begin{split} 
 A: X\rightarrow \mathcal{L}\big( \bbWp, L_p(E_0)\big)\,,\ u\mapsto A(u) \quad \text{is continuously differentiable}\,,
\end{split}
\end{equation}
\Rechts
and
\Links
\begin{equation}\tag*{${\bf (A_{11}')}$}
\begin{split} 
& \beta: X\rightarrow L_{p'}(\ml_+(E_1,E_\vartheta))\,,\ u\mapsto \beta(u) \  \text{ is continuously}\\
&\text{differentiable for some $\vartheta\in (\varsigma,1]$ .}
\end{split}
\end{equation}
\Rechts
As in \eqref{equi}, any solution
$u$ in $\bbWp$ to \eqref{E1aee} with $\varphi=u(0)$ in $E_\varsigma$ satisfies
\begin{equation}\label{equil}
u(a)=\Pi[u](a) \varphi \,,\quad a\in J\,,\qquad \varphi =\eta Q[u]\varphi\,,
\end{equation}
where $\Pi[u]$ and $Q[u]$ are defined in \eqref{Pi} respectively \eqref{Qu}. In particular,
$$
Q[u]=\ell[u]\Pi[u]\,,\quad u\in X\,,
$$
where
\begin{equation}\label{ell}
\ell: X\rightarrow \ml(\bbWp,E_\vartheta) \ \text{is continuously differentiable}
\end{equation}
and given as
$$
\ell[u] v:= \int_0^{a_m}\beta(u,a)\, v(a)\, \rd a\, ,\qquad  v\in \bbWp\,,\quad u\in X\,.
$$
We can actually weaken the strong positivity assumption $(A_{12})$ in that we have to assume it only at $u=0$ (instead of all $u\in X^+$):
\Links
\begin{equation}\tag*{${\bf (A_{12}')}$}
\begin{split} 
&\text{$\beta(0,a)\Pi[0](a) \in \ml_+(E_\varsigma)$ is strongly positive for $a$}\\
&\text{in a subset of $J$ of positive measure}\, .
\end{split}
\end{equation}
\Rechts
Then still $Q[u]\in \ml_+(E_\varsigma)\cap \mathcal{K} (E_\varsigma)$ for $u\in X^+$ by $(A_{10})$, $(A_{11}')$ but only $Q[0]\in \mathcal{K} (E_\varsigma)$ is strongly positive (i.e. \eqref{Qucomp} only holds for $u=0$) and therefore, Lemma~\ref{L0a} is valid only for $u=0$.\\

It is also worth noting that with $$S:=T[0]\in \ml\big (L_p(E_0)\times E_{\varsigma},\bbWp\big)$$ we can write a solution to \eqref{E1aee} in the form
$$
u=S\big(A(0)u-A(u)u \,,\, \eta\ell[u] u\big)\, .
$$
Consequently, introducing the function $F:\R\times \bbWp\rightarrow \bbWp$  as
$$
F(\eta,u):=u-S\big(A(0)u-A(u)u \,,\, \eta\ell[u] u\big)\ ,\quad (\eta,u)\in \R\times \bbWp\,,
$$
the solutions to \eqref{E1aee} we are interested in coincide with the zero set
$$
\mathfrak{S}:=\{(\eta,u)\in \R^+\times \bbWpp\,;\, F(\eta,u)=0\}\, .
$$ 
Clearly, $(\eta,u)=(\eta,0)$ for $\eta\in\R^+$ gives a trivial branch in $\mathfrak{S}$ of solutions to \eqref{E1aee}. We shall next show that  a nontrivial local branch of positive solutions bifurcates from this branch at the point 
$(\eta,u)=(r(Q[0])^{-1},0)$. 
Subsequently, we prove that this local branch is contained in an unbounded continuum of positive solutions.

\subsubsection{Local Bifurcation} We shall apply the celebrated result of Crandall-Rabinowitz \cite[Theorem 1.7]{CrandallRabinowitz} on local bifurcation from simple eigenvalues. We thus need to investigate first the Fr\'echet derivative $F_u(\eta,0)$, given by
$$
F_u(\eta,0)\phi=\phi- \eta\, S(0,\ell[0]\phi)=\phi- \eta\, \Pi[0]\ell[0]\phi
$$
for $\phi\in \bbWp$ and $\eta\in\R$. It readily follows from \eqref{ell}, \eqref{Pi}, and \eqref{dhr} that 
$$K:=  \Pi[0]\ell[0]\in\mathcal{K} (\bbWp)$$  and therefore, that
\begin{equation}\label{FF}
F_u(\eta,0)=1- \eta K
\end{equation}
is a Fredholm operator of zero index by the Riesz-Schauder Theorem:

\begin{proposition}\label{fredholm}
Suppose $(A_6)-(A_{10})$ along with $(A_8')$, $(A_{11}')$,  and $(A_{12}')$. Then, given $\eta\in\R$, 
$$F_u(\eta,0)=1-\eta K \in \ml(\bbWp)$$  is a Fredholm operator of index zero. Its kernel is 
$$
\mathrm{ker}\big(F_u(\eta,0)\big)=\big\{\Pi[0]w\,;\, w\in \mathrm{ker}\big(1-\eta Q[0]\big)\big\}
$$
and its image is
\bqn\label{rg}
\mathrm{rg}\big(F_u(\eta,0)\big)=\big\{h\in\bbWp\,;\, h(0)+\eta\ell [0] S\big(\partial_ah+A(0)h, 0\big)\in \mathrm{rg}\big(1-\eta Q[0]\big)\big\}\,.
\eqn
\end{proposition}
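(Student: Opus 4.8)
The Fredholm claim is essentially already contained in the discussion preceding the statement: by \eqref{FF} one has $F_u(\eta,0)=1-\eta K$ with $K=\Pi[0]\ell[0]\in\mathcal{K}(\bbWp)$, so the Riesz--Schauder theorem gives the Fredholm property of index zero for every $\eta\in\R$. The remaining work is to identify the kernel and the range, and the plan is to reduce both to the compact operator $Q[0]=\ell[0]\Pi[0]$ on $E_\varsigma$ by means of the isomorphism $S=T[0]=(\partial_a+A(0),\gamma_0)^{-1}$. Two identities will be used throughout: $\gamma_0\circ\Pi[0]=\mathrm{id}_{E_\varsigma}$ (since $\gamma_0 S(f,v^0)=v^0$), which in particular makes $\Pi[0]\in\ml(E_\varsigma,\bbWp)$ injective, and $Q[0]=\ell[0]\Pi[0]$ from \eqref{Qu}. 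As a by-product one gets $\dim\mathrm{ker}(F_u(\eta,0))=\dim\mathrm{ker}(1-\eta Q[0])$, consistent with index zero.

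For the kernel, let $\phi\in\bbWp$ satisfy $\phi=\eta\Pi[0]\ell[0]\phi$ and set $w:=\eta\ell[0]\phi\in E_\vartheta\subset E_\varsigma$; then $\phi=\Pi[0]w$, and applying $\ell[0]$ gives $w=\eta\ell[0]\Pi[0]w=\eta Q[0]w$, i.e. $w\in\mathrm{ker}(1-\eta Q[0])$. Conversely, for $w\in\mathrm{ker}(1-\eta Q[0])$ the element $\phi:=\Pi[0]w$ satisfies $\eta K\phi=\eta\Pi[0]Q[0]w=\Pi[0](\eta Q[0]w)=\Pi[0]w=\phi$. Injectivity of $\Pi[0]$ makes $w\mapsto\Pi[0]w$ a bijection, which is the asserted formula for $\mathrm{ker}(F_u(\eta,0))$.

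For the range, I would apply the isomorphism $(\partial_a+A(0),\gamma_0)$ to the equation $F_u(\eta,0)\phi=h$. Since $(\partial_a+A(0),\gamma_0)\Pi[0]v=(0,v)$, this is equivalent to the pair of relations $\partial_a\phi+A(0)\phi=\partial_a h+A(0)h$ and $\phi(0)-\eta\ell[0]\phi=h(0)$. The first relation forces $\phi-h=\Pi[0](\phi(0)-h(0))$, so with $\varphi:=\phi(0)\in E_\varsigma$ one has $\phi=h+\Pi[0](\varphi-h(0))$; inserting this into the second relation, using the decomposition $h=S\big(\partial_a h+A(0)h,0\big)+\Pi[0]h(0)$ together with $\ell[0]\Pi[0]=Q[0]$, the $Q[0]h(0)$-terms cancel and what remains is
\[
(1-\eta Q[0])\varphi=h(0)+\eta\,\ell[0]\,S\big(\partial_a h+A(0)h,\,0\big)\,.
\]
Hence $h\in\mathrm{rg}(F_u(\eta,0))$ precisely when the right-hand side lies in $\mathrm{rg}(1-\eta Q[0])$ — which is closed, $1-\eta Q[0]$ being Fredholm on $E_\varsigma$ by $(A_{12}')$ and \eqref{Qucomp} — and this is exactly \eqref{rg}. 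Conversely, any $\varphi\in E_\varsigma$ solving the displayed equation produces $\phi:=h+\Pi[0](\varphi-h(0))\in\bbWp$ with $F_u(\eta,0)\phi=h$, via the short identity $\eta\ell[0]\phi=\varphi-h(0)$. I expect the only real friction here to be the bookkeeping in this last reduction — correctly separating the ``interior'' part $S(\partial_a h+A(0)h,0)$ from the ``trace'' part $\Pi[0]h(0)$ of $h$ and tracking the cancellations that bring the range condition into the stated form — rather than any genuine analytic difficulty.
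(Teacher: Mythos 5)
Your proposal is correct and follows essentially the same route as the paper: Fredholmness of index zero comes from compactness of $K=\Pi[0]\ell[0]$ and the Riesz--Schauder theorem, and the kernel and range are identified by reducing $F_u(\eta,0)\phi=h$ through the isomorphism $(\partial_a+A(0),\gamma_0)$ to the pair $\partial_a\phi+A(0)\phi=\partial_ah+A(0)h$, $\phi(0)-\eta\ell[0]\phi=h(0)$, and hence to $(1-\eta Q[0])\phi(0)=h(0)+\eta\,\ell[0]S\big(\partial_ah+A(0)h,0\big)$. Your bookkeeping via $\phi=h+\Pi[0](\varphi-h(0))$ is just a rewriting of the paper's $\phi=S\big(\partial_ah+A(0)h,0\big)+\Pi[0]\phi(0)$, so the two arguments coincide (the parenthetical appeal to $(A_{12}')$ for compactness of $Q[0]$ is unnecessary, as compactness already follows from $(A_{11}')$ and the compact embedding, and closedness of $\mathrm{rg}(1-\eta Q[0])$ is not needed for the stated formula).
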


\begin{proof}
 To compute kernel and image of $F_u(\eta,0)$ note that
the equation $F_u(\eta,0)\phi=h$ for $\phi, h\in\bbWp$ is equivalent to
\begin{align}
\partial_a\phi+A(0)\phi&=\partial_a h+A(0)h\ ,\label{L1a}\\
\phi(0)-\eta\ell[0]\phi&=h(0)\ .\label{L2a}
\end{align}
From \eqref{L1a} it follows  that
\bqn\label{u1}
\phi=S\big(\partial_ah+A(0)h, 0\big)+\Pi[0]\phi(0)
\eqn
and, when plugged into \eqref{L2a}, we obtain 
\bqn\label{u2}
(1-\eta Q[0])\phi(0)= h(0)+\eta\ell[0]S\big(\partial_ah+A(0)h, 0\big)\, .
\eqn
Equations \eqref{u1} and \eqref{u2} imply the assertion.
\end{proof}

Proposition~\ref{fredholm} along with Lemma~\ref{L0a} yield 
\begin{equation}\label{bifvalue}
\eta_0:=\frac{1}{r(Q[0])} >0
\end{equation}
as a candidate for a bifurcation value as the kernel of the linearization $F_u(\eta_0,0)$ is one-dimensional. To establish that $\eta_0$ is indeed a bifurcation value we need a further result. Recall from Lemma~\ref{L0a} that $\zeta_0 \in \mathrm{int}(E_\varsigma^+)$ is an eigenvector of $Q[0]\in \mathcal{L}(E_\varsigma)$ corresponding to the simple eigenvalue $r(Q[0])$.

\begin{corollary}
Suppose  $(A_6)-(A_{10})$ along with $(A_8')$, $(A_{11}')$,  and $(A_{12}')$. Let $\eta_0>0$ be given by \eqref{bifvalue}. Then 1 is a simple eigenvalue of the  operator $\eta_0 K\in \mathcal{K}(\bbWp)$ and
\begin{equation}\label{kern}
\mathrm{ker}(F_u(\eta_0,0))=\mathrm{ker}(1-\eta_0 K)=\mathrm{span}\{\Pi[0]\zeta_0\}\,.
\end{equation}
In particular, the direct sum decomposition 
\begin{equation}\label{sumdec}
\bbWp=\mathrm{span}\{\Pi[0]\zeta_0\} \oplus \mathrm{rg}\big(F_u(\eta_0,0)\big)
\end{equation}
holds.
\end{corollary}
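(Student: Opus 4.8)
The plan is to reduce everything to the already-established simplicity of $r(Q[0])$ as an eigenvalue of $Q[0]\in\mathcal{L}(E_\varsigma)$ via the explicit correspondence between the spectral theory of $K=\Pi[0]\ell[0]$ on $\bbWp$ and that of $Q[0]=\ell[0]\Pi[0]$ on $E_\varsigma$. First I would note the general fact that for bounded operators $\Pi[0]\in\ml(E_\varsigma,\bbWp)$ and $\ell[0]\in\ml(\bbWp,E_\varsigma)$, the nonzero spectra of $\Pi[0]\ell[0]$ and $\ell[0]\Pi[0]$ coincide, and more precisely $w\mapsto \Pi[0]w$ is a bijection between $\mathrm{ker}(\mu-\ell[0]\Pi[0])$ and $\mathrm{ker}(\mu-\Pi[0]\ell[0])$ for $\mu\neq 0$ (with inverse essentially $\phi\mapsto\mu^{-1}\ell[0]\phi$). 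Applying this at $\mu=1$ with the relation $\eta_0 r(Q[0])=1$ gives that $\mathrm{ker}(1-\eta_0 K)=\Pi[0]\,\mathrm{ker}(1-\eta_0 Q[0])=\mathrm{span}\{\Pi[0]\zeta_0\}$, which is \eqref{kern}; here I invoke Lemma~\ref{L0a} (valid at $u=0$ under the weakened hypotheses by the remarks following $(A_{12}')$) to know $\mathrm{ker}(1-\eta_0 Q[0])$ is one-dimensional and spanned by $\zeta_0$, together with \eqref{FF} and \eqref{bifvalue} identifying $F_u(\eta_0,0)$ with $1-\eta_0 K$.

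Next I would establish that $1$ is a \emph{simple} eigenvalue of $\eta_0 K$, i.e. that $\mathrm{ker}(1-\eta_0 K)^2=\mathrm{ker}(1-\eta_0 K)$ (algebraic and geometric multiplicity both equal one). The clean way is again to transfer the corresponding statement from $Q[0]$: suppose $\psi\in\bbWp$ satisfies $(1-\eta_0 K)^2\psi=0$ and set $\phi:=(1-\eta_0 K)\psi\in\mathrm{ker}(1-\eta_0 K)$, so $\phi=c\,\Pi[0]\zeta_0$ for some scalar $c$. Applying $\ell[0]$ and using $\ell[0]\Pi[0]=Q[0]$ gives $(1-\eta_0 Q[0])\ell[0]\psi = \eta_0^{-1}\ell[0]\phi = \eta_0^{-1}c\,Q[0]\zeta_0 = \eta_0^{-1}c\, r(Q[0])\,\zeta_0 = c\,\zeta_0$; since $r(Q[0])=\eta_0^{-1}$ is a simple eigenvalue of $Q[0]$, we have $\zeta_0\notin\mathrm{rg}(1-\eta_0 Q[0])$, forcing $c=0$ and hence $\phi=0$, i.e. $\psi\in\mathrm{ker}(1-\eta_0 K)$. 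This proves simplicity. (Equivalently one can phrase this via the fact, guaranteed by Lemma~\ref{L0a}, that $\zeta_0'\notin (\mathrm{rg}(1-\eta_0 Q[0]))^\perp$ is strictly positive, but the argument via non-solvability of $(1-\eta_0 Q[0])x=\zeta_0$ is enough.)

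Finally, the direct sum decomposition \eqref{sumdec} is immediate: by Proposition~\ref{fredholm}, $F_u(\eta_0,0)=1-\eta_0 K$ is Fredholm of index zero, so $\dim\mathrm{ker}=\mathrm{codim}\,\mathrm{rg}=1$; simplicity of the eigenvalue $1$ means $\mathrm{ker}(1-\eta_0 K)\cap\mathrm{rg}(1-\eta_0 K)=\{0\}$ (if $\Pi[0]\zeta_0=(1-\eta_0 K)\psi$ then $\psi\in\mathrm{ker}(1-\eta_0 K)^2=\mathrm{ker}(1-\eta_0 K)$, whence $(1-\eta_0 K)\psi=0$, contradiction), and a one-dimensional complement to a codimension-one closed subspace that meets it trivially gives the splitting $\bbWp=\mathrm{span}\{\Pi[0]\zeta_0\}\oplus\mathrm{rg}(1-\eta_0 K)$.

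The only genuinely delicate point is the transfer of simplicity from $Q[0]$ on $E_\varsigma$ to $K$ on $\bbWp$ — one must be careful that $\Pi[0]$ and $\ell[0]$ really do intertwine the two operators ($\ell[0]\Pi[0]=Q[0]$ holds by the definition of $Q[u]$ in \eqref{Qu} together with \eqref{ell}, and $\Pi[0]\ell[0]=K$ by \eqref{FF}), and that the algebraic-multiplicity computation closes because $\zeta_0\notin\mathrm{rg}(1-\eta_0 Q[0])$, which is exactly the content of the last clause of Lemma~\ref{L0a} (taking $\xi=r(Q[0])$ and $\psi=\zeta_0$, the equation $(\xi-Q[0])\phi=\psi$ has no positive solution, and in fact no solution at all by the standard Krein–Rutman argument). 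Everything else is routine Fredholm bookkeeping.
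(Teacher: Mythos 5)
Your argument is correct and essentially the paper's own proof: both reduce \eqref{kern} and Fredholmness to Proposition~\ref{fredholm} and Lemma~\ref{L0a}, and both obtain algebraic simplicity by assuming a nontrivial generalized eigenvector and deducing that a nonzero multiple of $\zeta_0$ would then lie in $\mathrm{rg}\big(1-\eta_0 Q[0]\big)$, contradicting the simplicity of $r(Q[0])$ (the paper transfers via the range formula \eqref{rg}, you apply $\ell[0]$ directly to $(1-\eta_0 K)\psi=\phi$, which is the same computation), with \eqref{sumdec} then following from Fredholm theory. Only a harmless bookkeeping slip: the correct chain is $(1-\eta_0 Q[0])\ell[0]\psi=\ell[0]\phi=c\,Q[0]\zeta_0=c\,r(Q[0])\,\zeta_0=c\,\eta_0^{-1}\zeta_0$ rather than $c\,\zeta_0$, which is still a nonzero multiple of $\zeta_0$ when $c\neq 0$, so your contradiction stands.
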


\begin{proof}
As pointed out above, owing to Proposition~\ref{fredholm} and Lemma~\ref{L0a} we only need to check that the eigenvalue 1  of the compact operator $\eta_0 K$ is simple, that is, that $$\mathrm{ker}((1-\eta_0 K)^2)\subset \mathrm{ker}(1-\eta_0 K)\,,$$ since \eqref{sumdec} is implied by Fredholm theory. But if $\psi\in \mathrm{ker}((1-\eta_0 K)^2)$, then 
$$
\phi:= (1-\eta_0 K)\psi\in \mathrm{ker}(1-\eta_0 K) \cap \mathrm{rg}(1-\eta_0 K)\,,
$$
so that $\phi=\alpha \Pi[0]\zeta_0\in \mathrm{rg}(1-\eta_0 K)$ for some $\alpha\in\R$ due to \eqref{kern}. From \eqref{rg} we then deduce that 
$$
\alpha\zeta_0 \in \mathrm{rg}(1-\eta_0Q[0])\cap\mathrm{ker}(1-\eta_0Q[0])=\{0\}\,,
$$ 
the latter equality being due to the fact that $\eta_0^{-1}$ is a simple eigenvalue of the compact operator $Q[0]$. Thus $\alpha=0$ and $\phi=0$, so $\psi\in \mathrm{ker}(1-\eta_0 K)$.
\end{proof}

Based on the foregoing observation we are in a position to apply the theorem of Crandall-Rabinowitz  \cite{CrandallRabinowitz} on local bifurcation.  To get positive solutions it is crucial that $\zeta_0$ is in the interior of the positive cone $E_\varsigma^+$, hence small perturbations of it are still positive. We thus obtain a full branch of nontrivial positive solutions to \eqref{E1aee} in $\mathfrak{S}$ bifurcating from $(\eta,u)=(\eta_0,0)$. We shall use the notation
$$
\bbWppd:= \bbWpp\setminus\{0\}
$$
in the following. The next theorem is from \cite{WalkerJDDE} (see also \cite{WalkerSIMA09}).

\begin{theorem}[Local Bifurcation]\label{locbif}
Suppose   $(A_6)-(A_{10})$ along with $(A_8')$, $(A_{11}')$, and $(A_{12}')$. Let $\eta_0>0$ be given by \eqref{bifvalue}.
Then there are $\ve>0$ and a continuous function $$(\bar{\eta},\bar{u}): [0,\ve)\rightarrow \R\times \bbWp$$ such that the curve 
$$
\mathfrak{K}^+:=\{(\bar{\eta}(t),\bar{u}(t))\,;\, 0\le  t<\ve\} \subset \mathfrak{S}
$$  
bifurcates from the trivial branch $\{(\eta,0)\,;\,\eta\in\R\}$ at $(\bar{\eta}(0),\bar{u}(0))=(\eta_0,0)$ and 
\bqn\label{ps}
\bar{u}(t)=t\, \Pi[0]\zeta_0+o(t)\quad \text{as}\ t\rightarrow 0^+\ .
\eqn 
Near the bifurcation point $(\eta_0,0)$, all nontrivial positive zeros of $F$ lie on the curve $\mathfrak{K}^+$.
Moreover, $$\mathfrak{K}^+\setminus\{(\eta_0,0)\}\subset (0,\infty)\times \bbWppd\,.$$
\end{theorem}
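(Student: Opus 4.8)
The plan is to apply the Crandall--Rabinowitz theorem \cite[Theorem 1.7]{CrandallRabinowitz} to the map $F:\R\times\bbWp\rightarrow\bbWp$ at the point $(\eta_0,0)$, and then to exploit the interior-point structure of $E_\varsigma^+$ to upgrade the abstract local branch to a branch of \emph{positive} solutions. First I would verify the hypotheses of Crandall--Rabinowitz: by $(A_8')$ and $(A_{11}')$ together with \eqref{ell} and the smoothness of $S$ (a bounded linear operator) and of the nonlinear substitution operators $u\mapsto A(u)u$ and $u\mapsto \ell[u]u$, the map $F$ is (at least) $C^1$, in fact $C^2$ near $(\eta_0,0)$, with $F(\eta,0)=0$ for all $\eta$. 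The previous corollary gives that $\mathrm{ker}(F_u(\eta_0,0))=\mathrm{span}\{\Pi[0]\zeta_0\}$ is one-dimensional with the complementing direct sum decomposition \eqref{sumdec}. The remaining transversality condition $F_{\eta u}(\eta_0,0)\,\Pi[0]\zeta_0\notin\mathrm{rg}(F_u(\eta_0,0))$ I would check by computing $F_{\eta u}(\eta,0)\phi=-S(0,\ell[0]\phi)=-\Pi[0]\ell[0]\phi=-K\phi$, so that $F_{\eta u}(\eta_0,0)\Pi[0]\zeta_0=-\Pi[0]\ell[0]\Pi[0]\zeta_0=-\Pi[0]Q[0]\zeta_0=-r(Q[0])\,\Pi[0]\zeta_0=-\eta_0^{-1}\Pi[0]\zeta_0$; using the characterization \eqref{rg} of the range, membership of $\Pi[0]\zeta_0$ in $\mathrm{rg}(F_u(\eta_0,0))$ would force $\eta_0^{-1}\zeta_0=\zeta_0\cdot r(Q[0])\in\mathrm{rg}(1-\eta_0 Q[0])$, contradicting that $\zeta_0$ spans the kernel of $1-\eta_0 Q[0]$ and $\eta_0^{-1}$ is a simple eigenvalue of $Q[0]$ (so $\mathrm{ker}(1-\eta_0Q[0])\cap\mathrm{rg}(1-\eta_0Q[0])=\{0\}$).

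With the three hypotheses in hand, Crandall--Rabinowitz yields $\ve>0$ and a $C^1$ (hence continuous) curve $t\mapsto(\bar\eta(t),\bar u(t))$ on $[0,\ve)$ — after the usual restriction to $t\ge 0$ and a reparametrization — with $(\bar\eta(0),\bar u(0))=(\eta_0,0)$, $F(\bar\eta(t),\bar u(t))=0$, and $\bar u(t)=t\,\Pi[0]\zeta_0+o(t)$ as $t\to 0^+$, which is exactly \eqref{ps}; moreover the theorem asserts that in a neighborhood of $(\eta_0,0)$ the only zeros of $F$ are the trivial ones $(\eta,0)$ and those on the curve $\mathfrak{K}^+$. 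To conclude that $\mathfrak{K}^+\setminus\{(\eta_0,0)\}\subset(0,\infty)\times\bbWppd$, I would use \eqref{ps}: since $\zeta_0\in\mathrm{int}(E_\varsigma^+)$ by Lemma~\ref{L0a}, and $\Pi[0]\in\ml_+(E_\varsigma,\bbWp)$ with — after shrinking $\ve$ — the continuity of $\Pi$ and the embedding $\bbWp\hookrightarrow BUC(E_\varsigma)$ guaranteeing that $t\,\Pi[0]\zeta_0+o(t)$ stays in $\bbWpp$ and is nonzero for small $t>0$; positivity of $\bar u(t)$ for the remaining $t$ on a connected branch follows because a solution $u$ of \eqref{E1aee} satisfies $u(a)=\Pi[u](a)\varphi$ with $\varphi=u(0)$, and near the bifurcation point $\varphi$ stays in $\mathrm{int}(E_\varsigma^+)$ by the asymptotic form \eqref{ps} and $(A_{10})$. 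Likewise $\bar\eta(t)\to\eta_0>0$ forces $\bar\eta(t)>0$ after shrinking $\ve$. The last sentence — near $(\eta_0,0)$ all nontrivial positive zeros of $F$ lie on $\mathfrak{K}^+$ — is then immediate from the local uniqueness part of Crandall--Rabinowitz, since any positive nontrivial zero is in particular a nontrivial zero.

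I expect the main obstacle to be bookkeeping rather than conceptual: namely, making precise the passage from the abstract Crandall--Rabinowitz curve (which a priori is parametrized on a two-sided interval $(-\ve,\ve)$ and need not a priori land in the positive cone) to a one-sided branch lying in $\bbWppd$. This requires (i) checking that $o(t)$ in $\bbWp$ is genuinely lower order in the $BUC(E_\varsigma)$-topology so that $\bar u(t)(0)$ is a quasi-interior point of $E_\varsigma^+$ for small $t>0$, and (ii) possibly reparametrizing so the curve is traversed in the positive $t$-direction; the sign of $\langle\zeta_0',\cdot\rangle$ applied to $\bar u(t)$, with $\zeta_0'$ the dual eigenfunctional, gives a monotone parameter that pins down the correct orientation. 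Everything else — smoothness of $F$, Fredholmness, the kernel/range identifications — is already supplied by the preceding results $(A_6)$--$(A_{12}')$, Proposition~\ref{fredholm}, and the corollary.
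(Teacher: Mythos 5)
Your proposal is correct and follows essentially the same route as the paper: verify the Crandall--Rabinowitz hypotheses using the kernel/range description and the decomposition \eqref{sumdec} (your range-based check of transversality via \eqref{rg} is equivalent to the paper's one-line appeal to \eqref{sumdec}, since both reduce to $\mathrm{ker}(1-\eta_0Q[0])\cap\mathrm{rg}(1-\eta_0Q[0])=\{0\}$), then obtain positivity from \eqref{ps} by noting $\tfrac1t\gamma_0\bar u(t)\to\zeta_0\in\mathrm{int}(E_\varsigma^+)$ and writing $\bar u(t)=\Pi[\bar u(t)]\gamma_0\bar u(t)$ together with $(A_{10})$. The only detail worth tightening is the final uniqueness claim: local uniqueness from Crandall--Rabinowitz places nontrivial zeros on the two-sided curve, and the $t<0$ part is excluded from carrying positive solutions by the same trace expansion ($\gamma_0\bar u(t)$ close to $t\zeta_0\in-\mathrm{int}(E_\varsigma^+)$), exactly as the paper notes after the theorem.
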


\begin{proof}
Recalling that
$
\mathrm{ker}(F_u(\eta_0,0))=\mathrm{span}\{\Pi[0]\zeta_0\}
$
and that \eqref{FF} implies 
$$
F_{\eta,u}(\eta_0,0) \Pi[0]\zeta_0=-\eta_0^{-1}\Pi[0]\zeta_0\,,
$$
the transversality condition
\begin{equation}\label{transv}
F_{\eta,u}(\eta_0,0)\Pi[0]\zeta_0\not\in\mathrm{rg}(F_u(\eta_0,0))
\end{equation}
is an immediate consequence of \eqref{sumdec}.
 Consequently, owing to \eqref{kern} and \eqref{transv}, we may apply \cite[Theorem 1.7]{CrandallRabinowitz} and deduce the existence of the branch $\mathfrak{K}^+$ as stated in the theorem, where only the positivity of the branch remains to prove. 
For this we note that \eqref{ps} yields 
$$
\frac{1}{t} \gamma_0 \bar{u}(t)=\zeta_0+\gamma_0\frac{o(t)}{t} \quad \text{as}\ t\rightarrow 0\ ,
$$ 
hence, recalling that $\zeta_0\in \mathrm{int}(E_\varsigma^+)$, it follows that  $\gamma_0 \bar{u}(t)\in E_\varsigma^+$ provided $t\in (0,\ve)$ is sufficiently small.
Since 
$$
\bar{u}(t)=\Pi[\bar{u}(t)]\gamma_0 \bar{u}(t)\, ,
$$ 
we conclude $\bar{u}(t)\in \bbWppd$ from $(A_{10})$. This implies the assertion.
\end{proof}

In some cases it is possible to derive more information on the direction of bifurcation. For example, if $Q[v]$ is strongly positive and compact with $r(Q[v])\le 1$ for any $v$, then bifurcation must be supercritical since \eqref{equil} implies that $\eta r(Q[u])=1$ for any (positive) $(\eta,u)\in\mathfrak{S}$, hence~$\eta\ge 1$.\\

The theorem of Crandall $\&$ Rabinowitz  yields an additional curve 
$$
\mathfrak{K}^-:=\{(\bar{\eta}(t),\bar{u}(t))\,;\, \ve<t\le 0\}
$$  
of zeros of $F$ bifurcating from the trivial branch $\{(\eta,0)\,;\,\eta\in\R\}$ at $(\bar{\eta}(0),\bar{u}(0))=(\eta_0,0)$ which is of the form
$$
\bar{u}(t)=t\, \Pi[0]\zeta_0+o(t)\quad \text{as}\ t\rightarrow 0^-\,.
$$
It thus consists of non-positive solutions, i.e. $\mathfrak{K}^-\cap (0,\infty)\times\bbWppd = \emptyset$. 
Near the bifurcation point $(\eta_0,0)$, all nontrivial zeros of $F$ lie on the curve $\mathfrak{K}^-\cup\mathfrak{K}^+$.

\subsubsection{Global Bifurcation}

In this section we make use of the unilateral global bifurcation theory in the spirit of the celebrated alternative of Rabinowitz \cite{Rabinowitz} in order to show that the local curve $\mathfrak{K}^+$ provided by Theorem~\ref{locbif} is contained in a global continuum of positive solutions to \eqref{E1aee}. If the operator $A$ in \eqref{1aee} is independent of $u$, one can directly apply  \cite{Rabinowitz} to derive this result \cite{WalkerJDE10}.
 However, in the quasilinear case, where $A=A(u)$, the function $F$ defined above does not meet the required compactness assumptions from \cite{Rabinowitz}, and we thus rely on a variant of Rabinowitz's alternative proposed in~\cite[Theorem 4.4]{ShiWang} (that, in turn, is based on \cite{PejsachowiczRabier}). \\

In order to apply the results of \cite{ShiWang} we have to strengthen assumption $(A_9)$ to a convexity condition of the form
\Links
\begin{equation}\tag*{${\bf (A_9')}$}
\begin{split} 
&\big(\partial_a+[(1-\alpha) A(0)+\alpha A(u)],\gamma_0\big) \in \mathrm{Isom}\big(\bbWp, L_p(E_0)\times E_{\varsigma}\big)\\
&\text{for each $(\alpha,u)\in [0,1]\times X$}\,.
\end{split}
\end{equation}
\Rechts
This means that $(1-\alpha) A(0)+\alpha A(u)$ possesses maximal $L_p$-regularity, which is satisfied in many applications (see Section~\ref{Sec4}).

\begin{lemma}\label{Fredconvex}
Suppose  $(A_6)-(A_{10})$ along with $(A_8')$, $(A_9')$, $(A_{11}')$, and $(A_{12}')$. Then, given \mbox{$\alpha\in[0,1]$} and $(\eta,u)\in \R\times\bbWp$, the operator $$(1-\alpha)F_u(\eta,0)+\alpha F_u(\eta,u)\in\ml(\bbWp)$$ is Fredholm of index zero.
\end{lemma}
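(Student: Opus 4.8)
The plan is to reduce $(1-\alpha)F_u(\eta,0)+\alpha F_u(\eta,u)$, after identifying its ``leading part'' with an isomorphism of $\bbWp$ supplied by $(A_9')$, to a compact perturbation of the identity on $\bbWp$, and then invoke the Riesz--Schauder theorem. First I would compute the Fréchet derivative at a general point. Writing $S:=T[0]=(\partial_a+A(0),\gamma_0)^{-1}$, so that $F(\eta,u)=u-S\big(A(0)u-A(u)u,\,\eta\,\ell[u]u\big)$, the differentiability assumptions $(A_8')$ on $u\mapsto A(u)$ and $(A_{11}')$ on $u\mapsto\ell[u]$ together with the product rule give, for $\phi\in\bbWp$,
\[
F_u(\eta,u)\phi=\phi-S\big(A(0)\phi-A(u)\phi-(A'(u)\phi)u,\ \eta\,[(\ell'(u)\phi)u+\ell[u]\phi]\big),
\]
where $A'(u)\phi\in\ml(\bbWp,L_p(E_0))$ and $\ell'(u)\phi\in\ml(\bbWp,E_\vt)$, the argument $\phi$ being regarded as an element of $X$ via $\bbWp\dhr X$; for $u=0$ this collapses to $F_u(\eta,0)\phi=\phi-\eta S(0,\ell[0]\phi)=(1-\eta K)\phi$, recovering \eqref{FF}.

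The key algebraic step is to set $A_\alpha:=(1-\alpha)A(0)+\alpha A(u)$, so that $A(0)-A_\alpha=\alpha\,(A(0)-A(u))$, and to split, using the linearity of $S$,
\[
(1-\alpha)F_u(\eta,0)\phi+\alpha F_u(\eta,u)\phi=\big[\phi-S\big((A(0)-A_\alpha)\phi,\,0\big)\big]-S\big(-\alpha(A'(u)\phi)u,\ \eta\,[(1-\alpha)\ell[0]\phi+\alpha((\ell'(u)\phi)u+\ell[u]\phi)]\big)=:M_\alpha\phi-N_\alpha\phi.
\]
Applying $(\partial_a+A(0),\gamma_0)$ to $M_\alpha\phi$ and using $S=(\partial_a+A(0),\gamma_0)^{-1}$ shows $(\partial_a+A(0),\gamma_0)M_\alpha\phi=\big((\partial_a+A_\alpha)\phi,\gamma_0\phi\big)$, i.e. $M_\alpha=S\circ(\partial_a+A_\alpha,\gamma_0)$. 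By $(A_9')$ we have $(\partial_a+A_\alpha,\gamma_0)\in\mathrm{Isom}(\bbWp,L_p(E_0)\times E_\varsigma)$, while $S$ is an isomorphism (it is $T[0]$); hence $M_\alpha\in\mathrm{Isom}(\bbWp)$ and
\[
(1-\alpha)F_u(\eta,0)+\alpha F_u(\eta,u)=M_\alpha\big(1-M_\alpha^{-1}N_\alpha\big).
\]
Since $M_\alpha$ is invertible, it remains only to show $N_\alpha\in\K(\bbWp)$.

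Because $S\in\ml(L_p(E_0)\times E_\varsigma,\bbWp)$, compactness of $N_\alpha$ follows from compactness on $\bbWp$ of the two coordinate maps $\phi\mapsto(A'(u)\phi)u\in L_p(E_0)$ and $\phi\mapsto(1-\alpha)\ell[0]\phi+\alpha\big((\ell'(u)\phi)u+\ell[u]\phi\big)\in E_\varsigma$. The first is the composition of the compact embedding $\bbWp\dhr X$ (from $(A_7)$) with the bounded map $\phi\mapsto A'(u)\phi\in\ml(\bbWp,L_p(E_0))$ and the bounded evaluation at the fixed element $u\in\bbWp$, hence compact; the term $(\ell'(u)\phi)u$ is compact for the same reason. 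The terms $\ell[0]\phi$ and $\ell[u]\phi$ are bounded from $\bbWp$ into $E_\vt$, followed by the compact embedding $E_\vt\dhr E_\varsigma$ (valid since $\vt>\varsigma$, the same fact underlying \eqref{Qucomp}). Thus $N_\alpha$, and therefore $M_\alpha^{-1}N_\alpha$, is compact, so $1-M_\alpha^{-1}N_\alpha$ is Fredholm of index zero by Riesz--Schauder, and composing with the isomorphism $M_\alpha$ preserves this. (As sanity checks, $\alpha=0$ reproduces \eqref{FF}, while $\alpha=1$ gives that $F_u(\eta,u)$ itself is Fredholm of index zero --- which is what the subsequent global bifurcation argument will need.)

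I expect the genuinely delicate point to be the correct identification of the ``leading order'' part of the convex combination: the operator-valued term $\alpha\big(A(0)-A(u)\big)$ is, on its own, only bounded (not compact) from $\bbWp$ into $L_p(E_0)$, and the whole argument hinges on recognizing that $(\partial_a+A(0),\gamma_0)^{-1}$ composed with it equals the isomorphism $M_\alpha=S\circ(\partial_a+A_\alpha,\gamma_0)$ granted precisely by the convexity hypothesis $(A_9')$; everything genuinely left over involves the Fréchet differentials $A'(u),\ell'(u)$ evaluated at $\phi$ regarded in $X$, where $\bbWp\hookrightarrow X$ is compact. The remaining work is the routine but slightly fussy bookkeeping of continuity and compactness for the various composition and evaluation maps.
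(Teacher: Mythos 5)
Your proof is correct, and it takes a slightly different (and somewhat more streamlined) route than the paper's. The paper splits the convex combination as
\[
\phi\;\longmapsto\;\phi-S\big(\alpha(A(0)-A(u))\phi,\ \eta((1-\alpha)\ell[0]+\alpha\ell[u])\phi\big)\;-\;\alpha S\big(-A_u(u)[\phi]u,\ \eta\ell_u[u][\phi]u\big)\,,
\]
keeps the nonlocal boundary terms $\eta((1-\alpha)\ell[0]+\alpha\ell[u])\phi$ in the first ("principal") operator, and shows that this first operator -- which is not a compact perturbation of the identity -- is Fredholm of index zero by redoing the kernel/image computation of Proposition~\ref{fredholm} with $A(0)$ replaced by $A_\alpha:=(1-\alpha)A(0)+\alpha A(u)$, using $(A_9')$ (this is the step delegated to \cite{WalkerJDDE}); the remaining derivative terms are compact because they coincide with a Fr\'echet derivative in $\ml(X,\bbWp)$ and hence factor through the compact embedding $\bbWp\dhr X$ of $(A_7)$. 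You instead move \emph{all} boundary terms into the compact remainder $N_\alpha$ -- legitimately, since $\ell[0],\ell[u]\in\ml(\bbWp,E_\vt)$ followed by the compact embedding $E_\vt\dhr E_\varsigma$ is exactly the mechanism that makes $K=\Pi[0]\ell[0]$ compact in \eqref{FF} and underlies \eqref{Qucomp} -- and you then recognize the stripped-down principal part as $M_\alpha=S\circ(\partial_a+A_\alpha,\gamma_0)$, an isomorphism of $\bbWp$ directly by $(A_9')$; your identity $(\partial_a+A(0),\gamma_0)M_\alpha\phi=((\partial_a+A_\alpha)\phi,\gamma_0\phi)$ checks out, and your formula for $F_u(\eta,u)$ agrees with the paper's. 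What your version buys is that no separate Fredholm kernel/image computation is needed: isomorphism plus compact perturbation gives index zero at once (you do not even need the factorization $M_\alpha(1-M_\alpha^{-1}N_\alpha)$). What the paper's version buys is structural parallelism with Proposition~\ref{fredholm} and the quotable result in \cite{WalkerJDDE}. Both arguments hinge on the convexity hypothesis $(A_9')$ and on $(A_7)$ for the terms involving $A_u(u)$ and $\ell_u[u]$, so the essential inputs are the same.
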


\begin{proof}
Let $\alpha\in[0,1]$ and $(\eta,u)\in \R\times\bbWp$.
Then
\bqnn
\begin{split}
(1-\alpha)F_u(\eta,0)\phi+\alpha F_u(\eta,u)\phi=\, & \phi-S\big(\alpha(\A(0)-\A(u))\phi \,,\, \eta((1-\alpha)\ell[0]+\alpha\ell[u])\phi\big)\\
& -\alpha S\big(-\A_u(u)[\phi] u \,,\, \eta\ell_u[u][\phi] u\big)\,
\end{split}
\eqnn
for $\phi\in\bbWp$. Even though the second operator on the right-hand side of this identity is not compact in general when $\alpha\not= 0$,  one can show, by using assumption $(A_9')$ and computing the kernel and the image as in the proof of Proposition~\ref{fredholm}, that 
$$
\phi \mapsto \phi-S\big(\alpha(\A(0)-\A(u))\phi \,,\, \eta ((1-\alpha)\ell[0]+\alpha\ell[u])\phi\big)
$$
is a Fredholm operator in $\bbWp$ of index zero (see \cite[Proposition 2.2]{WalkerJDDE}). Hence, noticing that
$$
L\phi:= S\big(-\A_u(u)[\phi] u \,,\, \eta\ell_u[u][\phi] u\big)\,,\quad \phi\in\bbWp\,,
$$
defines a compact operator on $\bbWp$ according to assumption $(A_7)$ since $L$ coincides with the Fr\'echet derivative
$$
L= D_w S\big(-\A(w)u \,,\, \eta\ell[w] u\big)\big\vert_{w=u}\in \ml(X,\bbWp)\,,
$$
we conclude the assertion since compact perturbations of Fredholm operators are still Fredholm with the same index.
\end{proof}

We further have to impose that\footnote{Note that $E_0$ is separable since $E_1$ is separable and dense in $E_0$. 
Thus, if $E_0$ is reflexive , then $E_0'$ is separable.} 
\Links
\begin{equation}\tag*{${\bf (A_{15})}$}
\begin{split} 
E_0'\ \text{and } E_1 \text{ are separable .}
\end{split}
\end{equation}
\Rechts
Along with \cite{Restrepo}, this assumption guarantees that 
$\bbWp$ can be equipped with an equivalent norm which is differentiable at any point different from the origin (see \cite[Lemma 3.1]{WalkerJDDE}).

Now we can prove that there is an unbounded continuum of positive solutions to \eqref{E1aee} containing the local branch $\mathfrak{K}^+$  from Theorem~\ref{locbif}. The next theorem is contained in \cite{WalkerJDDE}.

\begin{theorem}[Global Bifurcation]\label{T_continua}
Suppose $(A_6)-(A_{10})$ along with $(A_8')$, $(A_9')$, $(A_{11}')$, $(A_{12}')$ and $(A_{15})$.
Then there is a connected component $\mathfrak{C}^+$ of $\mathfrak{S}$ that is unbounded in $\R\times \bbWp$ and contains the branch~$\mathfrak{K}^+$. Moreover, $$\mathfrak{C}^+\setminus\{(\eta_0,0)\}\subset (0,\infty)\times\bbWppd\,.$$ 
\end{theorem}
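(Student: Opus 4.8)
The plan is to obtain the global structure from the unilateral global bifurcation theorem \cite[Theorem~4.4]{ShiWang} (which, unlike Rabinowitz's original alternative \cite{Rabinowitz}, does not demand that the linearization be a compact perturbation of the identity, and is therefore adapted to the quasilinear situation $A=A(u)$). The first task is to line up its hypotheses. By $(A_8')$ and $(A_{11}')$ the map $F:\R\times\bbWp\to\bbWp$ is continuously differentiable, it satisfies $F(\eta,0)=0$ for all $\eta$, and by Proposition~\ref{fredholm} together with \eqref{FF} the linearization $F_u(\eta,0)=1-\eta K$ along the trivial branch is Fredholm of index zero with $K\in\mathcal{K}(\bbWp)$. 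The non-compactness of the full linearization $F_u(\eta,u)$ is exactly repaired by Lemma~\ref{Fredconvex}: the operators $(1-\alpha)F_u(\eta,0)+\alpha F_u(\eta,u)$ are Fredholm of index zero for every $\alpha\in[0,1]$, which is the structural input making the parity/orientation degree for nonlinear Fredholm maps \cite{PejsachowiczRabier} available, the orientation itself being furnished by the differentiable renorming of $\bbWp$ granted by $(A_{15})$ (via \cite{Restrepo}). At $\eta=\eta_0=r(Q[0])^{-1}$ (see \eqref{bifvalue}) the kernel $\mathrm{ker}\,F_u(\eta_0,0)=\mathrm{span}\{\Pi[0]\zeta_0\}$ is one-dimensional and the eigenvalue $1$ of $\eta_0K$ is algebraically simple, while the transversality condition \eqref{transv} (established in the proof of Theorem~\ref{locbif}, equivalent to the decomposition \eqref{sumdec}) shows that the parity jumps as $\eta$ crosses $\eta_0$, i.e. the crossing number there is odd. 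Finally $\zeta_0\in\mathrm{int}(E_\varsigma^+)$ by Lemma~\ref{L0a} and $\gamma_0(\Pi[0]\zeta_0)=\zeta_0$, so the bifurcation direction lies in the interior of the cone $P:=\gamma_0^{-1}(E_\varsigma^+)$, which has nonempty interior since $\gamma_0$ is surjective by $(A_9)$ and $\mathrm{int}(E_\varsigma^+)\ne\emptyset$ by $(A_6)$; this is what enables the \emph{unilateral} (one-sided) refinement.

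With these facts in place, \cite[Theorem~4.4]{ShiWang} produces a connected component $\mathfrak{C}^+$ of the closure of $\{(\eta,u)\in\mathfrak{S}\,;\,u\ne0\}$ which contains the local curve $\mathfrak{K}^+$ of Theorem~\ref{locbif} (hence the point $(\eta_0,0)$), whose nontrivial points stay in the positive cone, and which satisfies one of the following: either (a) $\mathfrak{C}^+$ is unbounded in $\R\times\bbWp$, or (b) $\mathfrak{C}^+$ contains a further point $(\eta_*,0)$ of the trivial branch with $\eta_*\ne\eta_0$. The goal is to exclude (b), which then forces (a).

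So suppose $(\eta_*,0)\in\mathfrak{C}^+$ with $\eta_*\ne\eta_0$. Since $\mathfrak{C}^+$ lies in the closure of the nontrivial positive solutions there are $(\eta_n,u_n)\in\mathfrak{C}^+$ with $u_n\in\bbWppd$, $u_n\to0$ in $\bbWp$ and $\eta_n\to\eta_*$. By \eqref{equil} the traces $\varphi_n:=\gamma_0u_n\in E_\varsigma^+$ solve $\varphi_n=\eta_nQ[u_n]\varphi_n$, and $\varphi_n\ne0$ (otherwise $u_n=\Pi[u_n]\varphi_n=0$). Normalising, $\psi_n:=\varphi_n/\|\varphi_n\|_{E_\varsigma}\in E_\varsigma^+$ satisfies $\psi_n=\eta_nQ[u_n]\psi_n$. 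As $u_n\to0$ in $X$ by $(A_7)$, continuity of $Q$ (from $(A_{11}')$ and \eqref{Qu}) gives $\eta_nQ[u_n]\to\eta_*Q[0]$ in $\ml(E_\varsigma)$; writing $\psi_n=\eta_*Q[0]\psi_n+(\eta_nQ[u_n]-\eta_*Q[0])\psi_n$ and using $Q[0]\in\mathcal{K}(E_\varsigma)$, a subsequence of $\psi_n$ converges in $E_\varsigma$ to some $\psi_*\in E_\varsigma^+$ with $\|\psi_*\|_{E_\varsigma}=1$ and $\psi_*=\eta_*Q[0]\psi_*$. Thus $\eta_*^{-1}$ is an eigenvalue of $Q[0]$ with positive eigenvector, so $\eta_*=r(Q[0])^{-1}=\eta_0$ by Lemma~\ref{L0a}, a contradiction (and $\eta_*=0$ is impossible, as it would give $\psi_*=0$). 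Hence $\mathfrak{C}^+$ is unbounded. The positivity statement then follows: any $(\eta,u)\in\mathfrak{C}^+\setminus\{(\eta_0,0)\}$ has $u\in\bbWpp$ and $u\ne0$ (a zero $u$ would be a trivial-branch point, necessarily $(\eta_0,0)$); then $\gamma_0u\in E_\varsigma^+$ with $\gamma_0u\ne0$, since $\gamma_0u=0$ forces $u=0$ by the uniqueness part of $(A_9)$ applied to $\partial_au+A(u,\cdot)u=0$, $u(0)=0$; and $\eta>0$, since $\eta\le0$ and $\gamma_0u=\eta\,Q[u]\gamma_0u$ would put $\gamma_0u\in E_\varsigma^+\cap(-E_\varsigma^+)=\{0\}$. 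Therefore $u=\Pi[u]\gamma_0u\in\bbWppd$ by $(A_{10})$, i.e. $\mathfrak{C}^+\setminus\{(\eta_0,0)\}\subset(0,\infty)\times\bbWppd$.

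The part I expect to require the real work is not the exclusion of secondary bifurcation above (routine Krein--Rutman once the machinery is running), but the careful verification that \cite[Theorem~4.4]{ShiWang} genuinely applies in the present functional-analytic setting: that the failure of compactness of $F_u(\eta,u)$ is fully compensated by Lemma~\ref{Fredconvex} together with the orientation theory of \cite{PejsachowiczRabier} over the renormed space of $(A_{15})$, and that the \emph{one-sided} continuum can be extracted with its nontrivial part lying in $(0,\infty)\times\bbWppd$. This last point is precisely the interplay of the interior-cone property of the bifurcation direction, the positivity preserving property $(A_{10})$, and maximal regularity $(A_9)$; at a heuristic level one must make sure the global continuum cannot leave the positive cone except through the trivial branch, and it is here that the argument of \cite{WalkerJDDE} must be invoked with care.
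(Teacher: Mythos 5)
Your overall strategy (apply \cite[Theorem 4.4]{ShiWang} using Lemma~\ref{Fredconvex}, Theorem~\ref{locbif} and the renorming from $(A_{15})$, then exclude secondary bifurcation points via a normalization/compactness argument and Lemma~\ref{L0a}) is the same as the paper's, and your exclusion of a point $(\eta_*,0)$ with $\eta_*\neq\eta_0$ is essentially the paper's argument. However, there are two genuine gaps. First, you quote the conclusion of \cite[Theorem 4.4]{ShiWang} as a dichotomy (unbounded, or another trivial point), whereas the unilateral theorem yields a trichotomy: the component may instead contain a point $(\eta,z)$ with $z\in\mathrm{rg}\big(F_u(\eta_0,0)\big)\setminus\{0\}$, i.e. $z$ lying in the complement of $\mathrm{span}\{\Pi[0]\zeta_0\}$ from \eqref{sumdec}. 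This third alternative is never addressed in your proposal, and ruling it out is a substantive step: the paper does it by writing $z=F_u(\eta_0,0)\phi$, applying $\ell[0]$ to get $(1-\eta_0 Q[0])\ell[0]\phi=\ell[0]z$, shifting by $\kappa\zeta_0$ (possible since $\zeta_0\in\mathrm{int}(E_\varsigma^+)$) to produce a positive solution of $(1-\eta_0Q[0])\psi_0=\ell[0]z$ with $\ell[0]z\in E_\varsigma^+\setminus\{0\}$, contradicting the last assertion of Lemma~\ref{L0a} because $r(Q[0])=1/\eta_0$. Without this step the conclusion ``(b) fails, hence (a) holds'' does not follow.

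Second, you assert that the abstract theorem already delivers a component ``whose nontrivial points stay in the positive cone.'' It does not: the unilateral refinement in \cite{ShiWang} is one-sidedness with respect to the kernel direction, not invariance of the order cone $\bbWpp$, and your later positivity argument (``a zero $u$ would be a trivial-branch point, necessarily $(\eta_0,0)$'') tacitly uses $u\in\bbWpp$ for all points of $\mathfrak{C}^+$, which is exactly what must be proved. The paper establishes $\mathfrak{C}^+\setminus\{(\eta_0,0)\}\subset(0,\infty)\times\bbWppd$ by a connectedness argument: if the component left $(0,\infty)\times\bbWppd$, one takes a sequence $(\eta_j,u_j)$ in the positive set converging to the exit point, normalizes $v_j=u_j/\|u_j\|_{\bbWp}$, uses the compact embedding $(A_7)$ together with \eqref{Pi}, \eqref{Qucont} and \eqref{gamma0} to pass to the limit in \eqref{equil}, and concludes via Lemma~\ref{L0a} that the exit point is $(\eta_0,0)$; this same argument is what simultaneously excludes alternative (b). Your limiting argument near a putative $(\eta_*,0)$ is a correct fragment of this, but it presupposes the positivity of $u_n$ that you have not established, and it also does not cover exit points with $u\neq 0$ on the boundary of the cone or with $\eta=0$. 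Note also that the paper's positivity of $\mathfrak{C}^+$ is needed again in the exclusion of alternative (c) (to know $z\in\bbWppd$), so these two omissions compound each other.
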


\begin{proof}
It readily follows from  \eqref{dhr} and $(A_7)-(A_9)$ that any bounded and closed subset of $\mathfrak{S}$ is compact in $\R\times \bbWp$.
Due to Theorem~\ref{locbif} and Lemma~\ref{Fredconvex}, we may thus apply \cite[Theorem 4.4, Remark 4.2]{ShiWang} and deduce that $\mathfrak{K}^+$ is contained in a connected component  $\mathfrak{C}^+$ of ${\mathfrak{S}}$ and one
of the alternatives
\begin{itemize}
\item[(a)] $\mathfrak{C}^+$ is unbounded in $\R\times\bbWp$, or
\item[(b)] $\mathfrak{C}^+$ contains a point $(\eta_*,0)$ with $\eta_*\not=\eta_0$, or
\item[(c)] $\mathfrak{C}^+$ contains a point $(\eta,z)$ with $z\in \mathrm{rg}\big(F_u(\eta_0,0)\big)\setminus\{0\}$ 
\end{itemize}
occurs, where we have taken into account \eqref{sumdec} in (c).
According to Theorem~\ref{locbif}, the component $\mathfrak{C}^+$ near the bifurcation point $(\eta_0,0)$ coincides with~$\mathfrak{K}^+$. 

Next, we show that $\mathfrak{C}^+\setminus\{(\eta_0,0)\}\subset (0,\infty)\times\bbWppd$. Indeed, if $\mathfrak{C}^+$ leaves $(0,\infty)\times\bbWppd$ at some point $(\eta,u)\in\mathfrak{C}^+\cap \R\times\bbWp$ with $(\eta,u)\notin (0,\infty)\times\bbWppd$, then there is a sequence $((\eta_j,u_j))_{j\in\N}$ in $\mathfrak{C}^+\cap (0,\infty)\times\bbWppd$  such that $$(\eta_j,u_j)\rightarrow (\eta,u)\ \text{ in }\ \R\times\bbWp\,.$$
Clearly, $\eta\ge 0$ and $u\in \bbWpp$ with $\eta=0$ or $u\equiv 0$. But since $(\eta,u)\in\mathfrak{S}$, we readily deduce from \eqref{equil} that $\eta=0$ implies $u\equiv 0$. Hence $u\equiv 0$ in any case, i.e. $(\eta_j,u_j)\rightarrow (\eta,0)$ in $\R\times\bbWp$. Again by \eqref{equil}, we have 
\bqn\label{sol111}
u_j=\Pi[u_j]u_j(0)\ ,\qquad u_j(0)=\eta_j Q[u_j]u_j(0)\ .
\eqn
Since $v_j:=u_j/\|u_j\|_{\bbWp}$ defines a bounded sequence $(v_j)$ in $\bbWp$,  we may use $(A_7)$  and extract a subsequence of $(v_j)$ (which we do not relabel)  which converges to some $v$ in $X$. From \eqref{Pi} and \eqref{Qucont} we deduce $\Pi[u_j]\rightarrow \Pi[0]$ in $\ml(E_\varsigma,\bbWp)$ and $Q[u_j]\rightarrow Q[0]$ in $\ml (E_\varsigma,E_\vartheta)$. 
We then obtain from \eqref{sol111} and \eqref{gamma0} that 
$$
\|u_j(0)\|_{E_\vartheta}\le  \eta_j  \,\|Q[u_j]\|_{\ml (E_\varsigma,E_\vartheta)}\, \|u_j(0)\|_{E_\varsigma}\le c\,\|u_j(0)\|_{E_\varsigma} \le c \,\|u_j\|_{\bbWp}\ ,
$$
and  conclude the boundedness of the sequence $(v_j(0))$ in $E_\vartheta\dhr E_\varsigma$. So, extracting a further subsequence (again not relabeled) we see that $v_j(0)\rightarrow w$ in $E_\varsigma^+$. Letting $j\rightarrow\infty$ in  \eqref{sol111} and using \eqref{Pi} and \eqref{Qucont} yields
$$
v=\Pi[0]w\ ,\qquad w=\eta  Q[0]w\ ,
$$
from which we first deduce that $\eta>0$ since otherwise $w=0$ implying the contradiction \mbox{$v\equiv 0$}. Consequently, $w\in E_\varsigma^+$ is an eigenvector of $Q[0]$ to the eigenvalue $1/\eta$. Thus \mbox{$\eta=\eta_0$} according to Lemma~\ref{L0a}, hence $(\eta,u)=(\eta_0,0)$.
Therefore, $\mathfrak{C}^+$ leaves the set $(0,\infty)\times\bbWppd$ only at $(\eta_0,0)$ and \mbox{$\mathfrak{C}^+\setminus\{(\eta_0,0)\}$} is contained in $ (0,\infty)\times\bbWppd$. In particular, alternative (b) above does not occur. 

We finally show that alternative (c) does not occur as well. Suppose to the contrary that $\mathfrak{C}^+$ contains a point $(\eta,z)$ with $z\not=0$ and $z=F_u(\eta_0,0)\phi$ for some $\phi\in\bbWp$. Then $z\in\bbWppd$ and $\phi-\eta_0\Pi[0]\ell[0]\phi=z$.  Applying the operator $\ell[0]$ on both sides yields 
\begin{equation}\label{z1}
(1-\eta_0 Q[0])\ell[0] \phi =\ell[0] z\,.
\end{equation}
Since $\zeta_0\in\mathrm{int}(E_\varsigma^+)$, we find $\kappa>0$ such that $\psi_0:=\kappa\zeta_0+\ell[0]\phi$ belongs to $E_\varsigma^+$. Combining 
$$
(1-\eta_0 Q[0])\zeta_0 =0
$$
with \eqref{z1} yields $$(1-\eta_0 Q[0])\psi_0 =\ell[0] z$$
contradicting the fact that this equation cannot have a positive solution  $\psi_0\in E_\varsigma^+$ according to Lemma~\ref{L0a} since $r(Q[0])=1/\eta_0$  and $\ell[0] z\in E_\varsigma^+\setminus\{0\}$ due to $(A_{11}')$ and $z\in\bbWppd$.
Therefore, alternative (c) above is impossible and the theorem is proven.
\end{proof}


\section{Examples}\label{Sec4}

We provide some simple applications for the results from the previous section. However, possible applications are not restricted to ones given here, and we also do not strive for providing optimal assumptions.  For other examples we refer to \cite{WalkerSIMA09,WalkerAMPA11,WalkerEJAM} or also to \cite{WalkerCrelle,WalkerNONRWA,WalkerNODEA}, where coexistence in two-species systems systems is studied.

Throughout let $\Omega$ be a bounded and smooth domain in $\R^n$ and $J=[0,a_m]$ with $a_m\in (0,\infty)$.

\subsection{Example I}

Consider a function
\bqn\label{11s}
\beta\in L_\infty(J)\ ,\quad \beta(a)>0\,, \ a\in J\,,
\eqn
that is normalized such that
\bqn\label{12s}
\int_0^{a_m}\beta(a)e^{-\nu_0 a}\,\rd a=1\ ,
\eqn
where $\nu_0>0$ denotes the principal eigenvalue of the negative Laplacian $-\Delta_D$ on $\Om$ subject to homogeneous Dirichlet boundary conditions. We introduce 
$$
\mathbb{W}_p:=L_p(J,W_{p,D}^2(\Omega))\cap W_p^1(J,L_p(\Om))
$$
with $p\in (n+2,\infty)$. Here, $W_{p,D}^s(\Omega)$ for $s\ge 0$ consists of those $v\in W_p^s(\Omega)$ such that $v=0$ on $\partial\Om$ if $s>1/p$. Recall that  $W_{p,D}^s(\Omega)$ embeds compactly in $L_p(\Om)$. We write $\mathbb{W}_p^+$ for the nonnegative functions in $\mathbb{W}_p$ and set $\dot{\mathbb{W}}_p^+:=\mathbb{W}_p^+\setminus\{0\}$. The next result is taken from \cite{WalkerCrelle}. Equation~\ref{13s} below arises when investigating semi-trivial branches in a simple two-species interaction model.

\begin{proposition}\label{P1a}
Let $\alpha >0$ and suppose \eqref{11s} and \eqref{12s}.
For each $\eta>1$ there is a unique solution $u_\eta\in\dot{\mathbb{W}}_p^+$ to
\bqn\label{13s}
\partial_a u-\Delta_D u=-\alpha u^2\ \text{ in }\ J\times\Omega ,\quad u(0,\cdot)=\eta\int_0^{a_m}\beta(a)\, u(a,\cdot)\,\rd a \ \text{ in }\ \Omega\, .
\eqn
The mapping $(\eta\mapsto u_\eta)$ belongs to $C^\infty((1,\infty),\mathbb{W}_p)$ and $\|u_\eta\|_{\mathbb{W}_p}\rightarrow\infty$ as $\eta\rightarrow\infty$. If $\eta\le 1$, then \eqref{13s} has no solution in $\dot{\mathbb{W}}_p^+$.
\end{proposition}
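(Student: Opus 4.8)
The strategy is to verify that equation~\eqref{13s} fits into the abstract bifurcation framework of Section~\ref{Sec3.2} and then to invoke Theorems~\ref{locbif} and~\ref{T_continua}, together with some additional, problem-specific arguments for uniqueness, nonexistence below the bifurcation value, and the precise blow-up as $\eta\to\infty$. Concretely, the plan is to set $E_0:=L_p(\Om)$, $E_1:=W_{p,D}^2(\Om)$, $X:=C(J,C^1_0(\bar\Om))$ (or a similar space into which $\bbWp$ embeds compactly by the Sobolev embedding theorem, using $p>n+2$), $A(u,a):=-\Delta_D$ (independent of $u$ and $a$ here, which simplifies several hypotheses), and $\beta(u,a):=\beta(a)\,\mathrm{id}$. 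Since $A$ is $u$-independent and generates an analytic semigroup with maximal $L_p$-regularity, assumptions $(A_6)$--$(A_{10})$, $(A_8')$, $(A_9')$, $(A_{11}')$, $(A_{12}')$, $(A_{15})$ all hold routinely: $\mathrm{int}(E_\varsigma^+)\neq\emptyset$ because $E_\varsigma=W_{p,D}^{2-2/p}(\Om)\hookrightarrow C^1_0(\bar\Om)$ when $p>n+2$; positivity of $\Pi[u]=e^{-a\,(-\Delta_D)}$ and strong positivity of $\beta(a)\Pi[0](a)$ follow from the strong maximum principle; separability is clear. The semilinear term $F(v)=-\alpha v^2$ must be absorbed, however: strictly speaking the abstract results are stated for $A=A(u)$ acting linearly, so I would either cite that the quasilinear framework covers reaction terms of the form $A(u,a)u$ with $A(u,a):=-\Delta_D+\alpha\,m_u(a,\cdot)$ for a suitable multiplication operator built from $u$ (so that $A(u,a)u=-\Delta_Du+\alpha u^2$), checking that $u\mapsto A(u)$ is $C^\infty$ on $X$ because multiplication is bilinear and $X$ is a Banach algebra, or, more cleanly, redo the Crandall--Rabinowitz/Rabinowitz argument directly for the map $F(\eta,u):=u-S(\alpha u^2,\eta\ell[0]u)$, whose linearization at $u=0$ is again $1-\eta K$ with $K=\Pi[0]\ell[0]$.

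**Identifying the bifurcation point and the global branch.** With this setup, $Q[0]=\int_0^{a_m}\beta(a)e^{-a(-\Delta_D)}\,\rd a$, and by the spectral mapping applied to the principal eigenpair $(\nu_0,\varphi_0)$ of $-\Delta_D$ one gets $Q[0]\varphi_0=\big(\int_0^{a_m}\beta(a)e^{-\nu_0 a}\,\rd a\big)\varphi_0=\varphi_0$ by the normalization~\eqref{12s}, so $r(Q[0])=1$ and hence the bifurcation value is $\eta_0=1/r(Q[0])=1$. Theorem~\ref{locbif} then yields a local curve $\mathfrak{K}^+$ of positive solutions emanating from $(\eta,u)=(1,0)$ with leading term $\bar u(t)=t\,\Pi[0]\varphi_0+o(t)$, and Theorem~\ref{T_continua} produces an unbounded continuum $\mathfrak{C}^+\subset(0,\infty)\times\bbWppd$ containing $\mathfrak{K}^+$, with $\mathfrak{C}^+\setminus\{(1,0)\}\subset(0,\infty)\times\bbWppd$. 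To locate $\mathfrak{C}^+$ I would use the a~priori identity that any positive solution satisfies $\eta\,r(Q[u])=1$ via~\eqref{equil}; but here, because the reaction term $-\alpha u^2$ is absorbed into an operator $A(u,a)=-\Delta_D+\alpha u(a,\cdot)$ which is \emph{larger} than $A(0)=-\Delta_D$ for $u>0$, the evolution operator $\Pi[u]$ is dominated by $\Pi[0]$, hence $Q[u]\ll Q[0]$ and $r(Q[u])<r(Q[0])=1$; therefore $\eta=1/r(Q[u])>1$ on the whole branch. This shows $\mathfrak{C}^+\subset(1,\infty)\times\bbWppd$ and, combined with the nonexistence statement below, forces the projection of $\mathfrak{C}^+$ onto the $\eta$-axis to be all of $(1,\infty)$ and $\|u\|_{\bbWp}\to\infty$ as $\eta\to\infty$ (the continuum cannot remain bounded in $\bbWp$ for $\eta$ in a bounded set, by the compactness of bounded closed subsets of $\mathfrak{S}$).

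**Uniqueness, nonexistence, and smoothness.** For nonexistence when $\eta\le 1$: if $u\in\bbWppd$ solved~\eqref{13s}, then as above $\eta\,r(Q[u])=1$ with $r(Q[u])<1$, forcing $\eta>1$, a contradiction; the marginal case is handled by the strict inequality $Q[u]\ll Q[0]$ valid as soon as $u\not\equiv 0$. For uniqueness I would exploit the monotone structure: the map $\eta\mapsto u_\eta$ along the branch is increasing (two positive solutions $u<v$ at the same $\eta$ would satisfy, by subtraction and the maximum principle applied to the parabolic problem for $v-u$ with the nonlocal boundary condition, an inequality forcing $v\equiv u$—here one uses that $x\mapsto \alpha x^2$ is convex and increasing on $\R^+$, so the difference solves a linear parabolic problem with a nonnegative zeroth-order coefficient and a nonlocal initial condition that is contractive because $\eta\,r(Q[u])<1$, whence $v-u\equiv 0$). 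The $C^\infty$ regularity of $(\eta\mapsto u_\eta)$ from $(1,\infty)$ into $\bbWp$ follows from the implicit function theorem applied to $F(\eta,u)=0$, once we know $F\in C^\infty$ (polynomial nonlinearity, so $F$ is real-analytic) and that $F_u(\eta,u_\eta)$ is invertible along the branch; invertibility is where I expect the main technical work, since it amounts to showing $1$ is not an eigenvalue of the linearization, equivalently that the linearized nonlocal parabolic problem has only the trivial solution—this I would get from the fact that $r$ of the relevant positive compact operator is strictly less than $1$ (a Kre\u{\i}n--Rutman/strict-monotonicity argument, using that linearizing $-\alpha u^2$ at $u_\eta>0$ adds a strictly positive zeroth-order term, which strictly decreases the spectral radius below the value $1$ attained at $u_\eta$ itself). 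Thus the implicit function theorem applies at every point of the branch, the branch is a globally defined $C^\infty$ curve, and by the global-bifurcation/nonexistence input it sweeps out exactly $\eta\in(1,\infty)$ with $\|u_\eta\|_{\bbWp}\to\infty$. The principal obstacle, then, is establishing this strict spectral inequality $r\big(\text{linearized operator}\big)<1$ uniformly enough to get both uniqueness and the non-degeneracy needed for the implicit function theorem; everything else is either a direct verification of the Section~\ref{Sec3.2} hypotheses or a standard maximum-principle comparison.
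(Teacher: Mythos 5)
Your setup, the identification of $\eta_0=1$ via $r(Q[0])=1$, the application of Theorems~\ref{locbif} and~\ref{T_continua}, the uniqueness mechanism (the difference $w=u_1-u_2$ solves a linear problem with zeroth-order coefficient $\alpha(u_1+u_2)$, and $r(Q[u_1+u_2])<1/\eta$ makes $1-\eta Q[u_1+u_2]$ invertible), and the smoothness via the implicit function theorem all match the paper's proof; your nonexistence argument for $\eta\le1$ via $\eta\,r(Q[u])=1$ and strict monotonicity of the spectral radius is a legitimate variant of the paper's more elementary computation with the principal eigenfunction $\zeta_0$. (Minor slips: two positive solutions need not be ordered, and along the branch $\eta\,r(Q[u_\eta])=1$, not $<1$; the contraction you need is for $Q[u_1+u_2]$, exactly as the paper does it.)

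The genuine gap is in the step where you claim that the projection of $\mathfrak{C}^+$ onto the $\eta$-axis is all of $(1,\infty)$ and that $\|u_\eta\|_{\mathbb{W}_p}\to\infty$ as $\eta\to\infty$, ``because the continuum cannot remain bounded in $\mathbb{W}_p$ for $\eta$ in a bounded set, by the compactness of bounded closed subsets of $\mathfrak{S}$.'' That inference is false: compactness of bounded closed subsets of $\mathfrak{S}$ does not prevent the unbounded continuum from blowing up in the $u$-direction over a bounded $\eta$-interval, e.g. $\mathfrak{C}^+\subset(1,\eta^*]\times\mathbb{W}_p$ with $\|u\|_{\mathbb{W}_p}$ unbounded, which would be perfectly consistent with your nonexistence result for $\eta\le1$ and would leave existence for $\eta>\eta^*$ unproven; likewise, the continuum could be unbounded only through $\eta\to\infty$ with $\|u_\eta\|_{\mathbb{W}_p}$ bounded, so the claimed norm blow-up does not follow either. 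The paper closes exactly this hole with problem-specific a priori estimates: the maximum-principle bounds \eqref{23ass} (pointwise upper bound $u_\eta(a)\le(\alpha a+(\kappa\eta^2)^{-1})^{-1}$ and a lower bound proportional to $(\eta-1)\zeta_0$), followed by a maximal-regularity bootstrap showing that if $\eta_j$ stays bounded then $\|u_{\eta_j}(0)\|_{W_p^{2-2/p}}$, and hence $\|u_{\eta_j}\|_{\mathbb{W}_p}$, stays bounded — so unboundedness of the continuum forces $\eta_j\to\infty$, and connectedness plus uniqueness then gives a solution for every $\eta>1$; the lower bound in \eqref{23ass} is what yields $\|u_\eta\|_{\mathbb{W}_p}\to\infty$ as $\eta\to\infty$. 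Without some substitute for these estimates your argument establishes local bifurcation, an unbounded continuum in $(1,\infty)\times\dot{\mathbb{W}}_p^+$, uniqueness, and nonexistence for $\eta\le1$, but not existence for every $\eta>1$ nor the stated asymptotics of $\|u_\eta\|_{\mathbb{W}_p}$.
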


\begin{proof}
First note that 
$$
\big(L_p(\Omega),W_{p,D}^2(\Omega)\big)_{1-1/p,p}\doteq W_{p,D}^{2-2/p}(\Omega) \hookrightarrow C^1(\bar\Omega)
$$
since $p>n+2$, hence $(A_6)$ holds. 
Set $A(u):=-\Delta_D+ \alpha u$. It follows from \cite[Corollary 4]{Simon} and \eqref{BUC}  that
\bqn\label{embb}
\mathbb{W}_p\dhr X:= L_p(J,L_p(\Omega))\ ,\qquad \mathbb{W}_p\hookrightarrow C(J,C^1(\bar{\Om}))\ ,
\eqn
from which we easily deduce that
$$
A\in C^1\big(X,\ml(\mathbb{W}_p,L_p(J,L_p(\Omega))\big)\,,
$$
i.e. $(A_7)$ and $(A_8')$ hold.
Observe then that $A(u)$ has maximal $L_p$-regularity by \cite[III. Example 4.7.3, III. Theorem 4.10.7]{LQPP}, hence $(A_9')$ holds. Moreover, the semigroup $\{e^{a\Delta_D}\,;\, a>0 \}$ consists of strongly positive operators in $\ml (W_{p,D}^{2-2/p}(\Omega))$ by the maximum principle (e.g. \cite[Corollary 13.6]{DanersKochMedina}), hence $(A_{10})$ holds while  \eqref{11s} entails $(A_{12})$ (and in particular $(A_{12}')$). Assumptions $(A_{11}')$ and $(A_{15})$ are clear. Note that
$$
Q[0]=\int_0^{a_m} \beta(a)\,e^{a\Delta_D}\,\rd a
$$
and that, if $\zeta_0$ denotes the positive eigenvector of $-\Delta_D$ to the eigenvalue $\nu_0>0$ with $\|\zeta_0\|_\infty=1$, then $e^{a\Delta_D}\zeta_0=e^{-a\nu_0}\zeta_0$. Thus, \eqref{12s} implies $r(Q[0])=1$. 
 We are therefore in a position to apply Theorem~\ref{T_continua} and conclude the existence of an unbounded continuum $\mathfrak{C}^+$ of solutions in $(0,\infty)\times\dot{\mathbb{W}}_p^+$ emanating from $(1,0)$. 

If $(\eta,u)$ is a solution to \eqref{13s} with $u\in \dot{\mathbb{W}}_p^+$, then
$z'(a)\le-\nu_0z(a)$ for $a\in J$, where
$$
z(a):=\int_\Om \zeta_0\,u(a)\,\rd x\ ,\quad a\in J\ ,
$$
and thus
$$
z(0)=\eta\int_0^{a_m} \beta(a)\int_\Om \zeta_0 u(a)\,\rd x\,\rd a\le \eta\int_0^{a_m}\beta(a)e^{-\nu_0 a}\,\rd a\, z(0)\ .
$$
Since $u\in \dot{\mathbb{W}}_p^+$, this inequality is actually strict. Therefore, we have $z(0)>0$ and so $\eta> 1$ by the above inequality and \eqref{12s}. 

To show uniqueness consider two solutions $u_1, u_2$ to \eqref{13s} in $\dot{\mathbb{W}}_p^+$ for a fixed $\eta>1$. Then 
$$
u_j=\Pi[u_j] u_j(0)\,,\quad \big(1-\eta Q[u_j]\big) u_j(0) =0
$$
and hence $\eta r(Q[u_j])=1$ since $u_j(0)$ is positive (see Lemma~\ref{L0a}). Since $u_1, u_2\in \dot{\mathbb{W}}_p^+$, one can show as in the proof of Corollary~\ref{L1} that $r(Q[u_j])>r(Q[u_1+u_2])$ and thus 
\bqn\label{po}
(1-\eta Q[u_1+u_2])^{-1}\in \ml(W_{p,D}^{2-2/p}(\Omega))\,.
\eqn
On the other hand,
$w:=u_1-u_2$ solves
$$
\partial_a w-\Delta_D w+ \alpha (u_1+u_2) w=0\,,\quad a\in J\,,\qquad w(0)=\eta\int_0^{a_m} \beta(a) w(a)\,\rd a\,,
$$
from which we obtain
$$
\big(1-\eta Q[u_1+u_2]\big) w(0) =0\,.
$$
Hence $w(0)= 0$ by \eqref{po} and then $w=0$. This proves uniqueness.

To continue we recall without proof (see \cite[Lemma 3.6]{WalkerCrelle}) that the maximum principle implies the existence of  a constant $\kappa>0$ such that for any $(\eta,u_\eta)\in\mathfrak{C}^+$ we have
\bqn\label{23ass}
\frac{\nu_0}{\alpha}\frac{\eta-1}{\eta(e^{\nu_0 a}-1)+1-e^{-\nu_0 (a_m-a)}}\,\zeta_0 \,\le\, u_\eta(a)\, \le \,\frac{1}{\alpha a+(\kappa\eta^2)^{-1}} \quad \text{on}\ \,\Om ,\quad a\in J\, .
\eqn
Consider now a sequence
$(\eta_j,u_{\eta_j})\in\mathfrak{C}^+$ with $\|(\eta_j,u_{\eta_j})\|_{\R\times \mathbb{W}_p}\rightarrow\infty$ as $j\rightarrow\infty$. We shall show that $\eta_j \rightarrow\infty$. Suppose otherwise, i.e. let $\eta_j\le\eta_*$ for some $\eta_*>1$. Then necessarily $\|u_{\eta_j}\|_{\mathbb{W}_p}\rightarrow\infty$, and \eqref{23ass} yields
\bqn\label{2ass}
\|u_{\eta_j}(a)\|_\infty\le\kappa\eta_*^2\ ,\quad a\in J\ ,\quad j\in\N\,.
\eqn
The positivity of $u_{\eta_j}$ and \eqref{13s} ensure $0\le u_{\eta_j}(a)\le u_{\eta_j}(0)$ on $\Omega$ for $a\in J$, and thus
$$
\|u_{\eta_j}^2\|_{L_p(J,L_p(\Omega))}^p\,=\, \int_0^{a_m}\int_\Omega(u_{\eta_j}(a))^{2p}\,\rd x\,\rd a\,\le\, a_m\,\|u_{\eta_j}(0)\|_{L_{2p(\Omega)}}^{2p}\ ,\quad j\in\N\, .
$$
Using the property of maximal $L_p$-regularity for $-\Delta_D$, it follows from \eqref{13s} that
\bqn\label{2aas}
\begin{split}
\|u_{\eta_j}\|_{\mathbb{W}_p}\,&\le\, c\,\big(\|\alpha u_{\eta_j}^2\|_{L_p(J,L_p(\Omega))}+\|u_{\eta_j}(0)\|_{W_{p}^{2-2/p}(\Omega)}\big)\\
& \le\,
c\,\big(\|u_{\eta_j}(0)\|_{L_{2p}(\Omega)}^{2}+\|u_{\eta_j}(0)\|_{W_{p}^{2-2/p}(\Omega)}\big)
\end{split}
\eqn
for $j\in\N$ and some constant $c$ independent of $u_{\eta_j}$. Writing the solution $u_{\eta_j}$ to \eqref{13s} in the form
$$
u_{\eta_j}(a)=e^{a\Delta_D}\,u_{\eta_j}(0)- \alpha\int_0^a e^{(a-\sigma)\Delta_D}\, (u_{\eta_j}(\sigma))^2\,\rd \sigma\ ,
$$ 
we see that
$$
u_{\eta_j}(0)=\eta_j\int_0^{a_m}\beta(a)\,
e^{a\Delta_D}\,u_{\eta_j}(0)\,\rd a- \, \alpha\eta_j\int_0^{a_m} \beta(a)\int_0^a e^{(a-\sigma)\Delta_D}\, (u_{\eta_j}(\sigma))^2\,\rd \sigma\,\rd a\ .
$$
Taking into account that $$\|e^{a\Delta_D}\|_{\ml(L_p(\Omega),W_{p,D}^{2-2/p}(\Omega))}\le c a^{1/p-1}\,,\quad a>0\,,$$ (see e.g. \cite{LQPP}), we derive
from \eqref{2ass} that $(u_{\eta_j}(0))_{j\in\N}$ stays bounded in $W_{p}^{2-2/p}(\Omega)$. But then $(u_{\eta_j})_{j\in\N}$ stays bounded in $\mathbb{W}_p^+$ by \eqref{2aas} in contradiction to our assumption. Therefore, $\eta_j \rightarrow\infty$ and, since $\mathfrak{C}^+$ is connected, we conclude that \eqref{13s} admits for each value of $\eta>1$ a unique solution $u_\eta\in\mathbb{W}_p^+$.

We then claim that $\|u_\eta\|_{\mathbb{W}_p}\rightarrow\infty$ as $\eta\rightarrow\infty$. Indeed, assuming to the contrary that $\|u_\eta\|_{\mathbb{W}_p}\le c<\infty$ for all $\eta>1$, we see that $\|u_\eta(0)\|_{\infty}$ is bounded with respect to $\eta$ by \eqref{embb}. Thus $$u_\eta(0)=\eta \int_0^{a_m}\beta(a) u_\eta(a)\rd a$$ implies that $$\left\|\int_0^{a_m}\beta(a) u_\eta(a)\rd a\right\|_\infty \longrightarrow 0 \ \text{ as }\ \eta\longrightarrow \infty\,.$$ However, \eqref{23ass} yields the contradiction 
$$
\frac{\nu_0}{\alpha(1-e^{-\nu_0 a_m})}\,\frac{\eta-1}{\eta}\,\zeta_0\,\le\,\frac{1}{\eta}\, u_\eta(0)\,=\,\int_0^{a_m}\beta(a) u_\eta(a)\rd a\quad \text{on}\ \Om\,.
$$
Consequently, $\|u_\eta\|_{\mathbb{W}_p}\rightarrow\infty$ as $\eta\rightarrow\infty$.

Finally, the statement $(\eta\mapsto u_\eta)\in C^\infty((1,\infty),\mathbb{W}_p)$ is a consequence of the the Implicit Function Theorem.
\end{proof}


\subsection{Example II}

Let the functions 
$$
d:\R\times J\times{\bar\Omega} \rightarrow(\underline{d},\infty)
$$
 and 
$$
\mu, \beta:\R\times J\times \bar{\Omega}\rightarrow\R^+
$$ 
be smooth  with $\underline{d}>0$  and $\beta(0,\cdot,\cdot)> 0$. We then consider the quasilinear problem
\begin{subequations}\label{Aas}
\begin{align}
&\partial_a u-\mathrm{div}_x\big(d(U(x),a,x)\nabla_xu\big)+\mu(U(x),a,x)u=0\,, && a\in (0,a_m)\,, & x\in\Om\,, 
\label{Aa}\\
&u(0,x)=\eta\int_0^{a_m} \beta(U(x),a,x)\, u(a,x)\,\rd a\,, &&& x\in\Om\,, 
\label{B}
\end{align}
\end{subequations}
where $u$ is subject to a boundary condition $$\mathcal{B} u (a,x)=0\,,\quad (a,x)\in (0,a_m)\times\partial\Om\,$$ of either Dirichlet or Neumann type, and where we use the notation
$$
U(x)=\int_0^{a_m}u(a,x)\rd a\,, \quad x\in \Om\, .
$$
Fix $p\in (n+2,\infty)$ and let $\Wpb^s(\Om)$ denote the Sobolev space of all $v\in W_p^s(\Om)$  subject to homogeneous boundary condition $\mathcal{B} v=0$ on $\partial\Omega$ if meaningful (i.e. if $s>1/p$ in the Dirichlet case and $s>1+1/p$ in the Neumann case). 
As in the previous example we introduce
$$
\mathbb{W}_p:=L_p(J,\Wpb^2(\Omega))\cap W_p^1(J,L_p(\Om))
$$
and write $\mathbb{W}_p^+$ for the nonnegative functions in $\mathbb{W}_p$ and $\dot{\mathbb{W}}_p^+:=\mathbb{W}_p^+\setminus\{0\}$.

\begin{proposition}\label{P43}
Given the assumptions above, there is an unbounded continuum of positive solutions $(\eta,u)$ in $(0,\infty)\times\dot{\mathbb{W}}_p^+$ to~\mbox{\eqref{Aas}}
\end{proposition}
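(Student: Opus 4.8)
The plan is to recast problem~\eqref{Aas} in the abstract form~\eqref{E1aee} and then invoke the global bifurcation result, Theorem~\ref{T_continua}; the verification of its hypotheses follows the scheme of the proof of Proposition~\ref{P1a}, the essential new feature being that here the operator genuinely depends on $u$ through its principal part. I would take $E_0:=L_p(\Om)$ and $E_1:=\Wpb^2(\Om)$, so that, since $p>n+2$,
$$
E_\varsigma\doteq \Wpb^{2-2/p}(\Om)\hookrightarrow C^1(\bar\Om)\,;
$$
hence the positive cone of $E_\varsigma$ has nonempty interior ($(A_6)$ holds), while $E_0'=L_{p'}(\Om)$ and $E_1$ are separable ($(A_{15})$ holds). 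Setting $U:=\int_0^{a_m}u(a,\cdot)\,\rd a$ and
$$
A(u)\,w:=-\mathrm{div}_x\big(d(U,\cdot,\cdot)\nabla_x w\big)+\mu(U,\cdot,\cdot)\,w\,,\qquad \beta(u)(a)\,w:=\beta(U,a,\cdot)\,w\,,
$$
problem~\eqref{Aas} with the bifurcation parameter $\eta$ inserted is exactly~\eqref{E1aee}.

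For the auxiliary space of $(A_7)$ I would take $X:=L_p\big(J,\Wpb^{2-2\delta}(\Om)\big)$ with $\delta>0$ so small that $\Wpb^{2-2\delta}(\Om)$ is a multiplication algebra compactly embedded in $C^1(\bar\Om)$; this is possible precisely because $p>n+2$. The Aubin--Lions--Simon lemma \cite[Corollary~4]{Simon} then gives $\mathbb{W}_p\dhr X\hookrightarrow L_p(E_0)$. Since $u\mapsto U$ is bounded and linear from $X$ into the Banach algebra $\Wpb^{2-2\delta}(\Om)\hookrightarrow C^1(\bar\Om)$, and $d,\mu,\beta$ are smooth, the substitution operators $u\mapsto d(U,\cdot,\cdot)$, $u\mapsto\mu(U,\cdot,\cdot)$, $u\mapsto\beta(U,\cdot,\cdot)$ are continuously differentiable into $L_\infty\big(J,\Wpb^{2-2\delta}(\Om)\big)$. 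From this one reads off $(A_8')$ and, after checking that multiplication by a $\Wpb^{2-2\delta}(\Om)$-function maps $E_1$ boundedly into a space $E_\vartheta$ with $\vartheta\in(\varsigma,1]$, also $(A_{11}')$. Positivity of the solution operator, i.e. $(A_{10})$, follows from the parabolic maximum principle, and the strong positivity in $(A_{12}')$ from $\beta(0,\cdot,\cdot)>0$ together with the parabolic Hopf lemma, which makes $\Pi[0](a)$ carry $E_\varsigma^+\setminus\{0\}$ into quasi-interior points for every $a\in(0,a_m]$ (see \cite{DanersKochMedina}).

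The heart of the matter is $(A_9')$: for $(\alpha,u)\in[0,1]\times X$ the operator $(1-\alpha)A(0)+\alpha A(u)$ is again of the form $w\mapsto-\mathrm{div}_x\big(\tilde d\,\nabla_x w\big)+\tilde\mu\, w$ with $\tilde d:=(1-\alpha)d(0,\cdot,\cdot)+\alpha d(U,\cdot,\cdot)\ge\underline d>0$ uniformly elliptic, of class $C^1$ in $x$ and continuous in $a$, and $\tilde\mu\ge 0$ bounded; rewriting its principal part in nondivergence form with continuous principal coefficients, one obtains maximal $L_p$-regularity from \cite{LQPP}, uniformly in $(\alpha,u)$. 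In particular $(A_9)$ holds. Thus all of $(A_6)$--$(A_{10})$, $(A_8')$, $(A_9')$, $(A_{11}')$, $(A_{12}')$, $(A_{15})$ are in force, and Theorem~\ref{T_continua} applies with bifurcation value $\eta_0=1/r(Q[0])>0$, where $Q[0]=\int_0^{a_m}\beta(0,a,\cdot)\,\Pi[0](a)\,\rd a$: it yields a connected component $\mathfrak{C}^+$ of the solution set that is unbounded in $\R\times\mathbb{W}_p$ and satisfies $\mathfrak{C}^+\setminus\{(\eta_0,0)\}\subset(0,\infty)\times\dot{\mathbb{W}}_p^+$, which is the asserted unbounded continuum of positive solutions to~\eqref{Aas}. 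I expect the two genuinely delicate points to be: (i) choosing $X$ so that, thanks to $p>n+2$, the total population $U$ lives in a multiplication algebra continuously embedded in $C^1(\bar\Om)$ -- this is what makes the $u$-dependent coefficients admissible both for the maximal-regularity theory and for $(A_{11}')$; and (ii) the uniform maximal $L_p$-regularity of the convex-combination family required in $(A_9')$.
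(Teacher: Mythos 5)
Your proposal is correct and follows essentially the same route as the paper: recast \eqref{Aas} in the framework of \eqref{E1aee} with $E_0=L_p(\Om)$, $E_1=\Wpb^2(\Om)$, verify $(A_6)$--$(A_{12}')$ and $(A_{15})$ using the embedding $\Wpb^{2-2/p}(\Om)\hookrightarrow C^1(\bar\Om)$ for $p>n+2$, Simon's compactness for $(A_7)$, and the positivity/strong positivity via the maximum principle, then apply Theorem~\ref{T_continua}. The only (harmless) deviations are your choice of $X=L_p(J,\Wpb^{2-2\delta}(\Om))$ instead of the paper's $L_p(J,\Wpb^{2-2/p+\delta}(\Om))$ and your justification of $(A_9')$ by rewriting the uniformly elliptic convex combination in nondivergence form with continuous coefficients, where the paper instead invokes that $-A_\alpha(u,a)$ generates contraction semigroups on the $L_q$-scale and is self-adjoint in $L_2$, citing \cite[III.Example 4.7.3, III.Theorem 4.10.10]{LQPP}.
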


\begin{proof}
Standard interpolation theory  \cite[Theorem 4.3.3]{Triebel} yields (for $2\theta\not= 1/p$ or $2\theta\not= 1-1/p$)
\bqn
\label{alpha}
\big(L_p(\Om),\Wpb^2(\Omega)\big)_{\theta,p}\doteq \Wpb^{2\theta}(\Om)
\eqn
and, in particular,
$$
\big(L_p(\Om),\Wpb^2(\Omega)\big)_{1-1/p,p}\doteq \Wpb^{2-2/p}(\Om) \hookrightarrow C^1(\bar{\Om})\,,
$$
hence $(A_6)$ holds.  From \cite[Corollary 4]{Simon} and \eqref{alpha} we obtain for some $\delta>0$ that 
\bqn\label{XX}
\mathbb{W}_p\dhr X:=L_p(J,\Wpb^{2-2/p+\delta}(\Om))
\eqn
and thus $(A_7)$ holds.
In particular, for $u\in X$ we have $$U=\int_0^{a_m}u(a,\cdot)\rd a\in \Wpb^{2-2/p}(\Om)\,.$$ Define, for $u\in X$, $a\in J$,
$$
A(u,a)w:=-\mathrm{div}_x\big(d(U(x),a,x)\nabla_xw\big) +\mu(U(x),a,x) w\ , \quad w\in \Wpb^{2}(\Om)\ ,\quad x\in \Omega\ .
$$
Then, since $d$ and $\mu$ are smooth, we obtain (see e.g. \cite[Proposition 4.1]{WalkerAMPA11}) from \eqref{XX} that
$$
[u\mapsto A(u)]\in C^1\big(X,L_\infty(J,\ml(\Wpb^2(\Om),L_p(\Om)))\big) \hookrightarrow C^1\big(X,\ml(\mathbb{W}_p,L_p(J,L_p(\Om)))\big)
$$
as required by $(A_8')$. Moreover, since $\beta$ is smooth and $\Wpb^{2-2/p+\delta}(\Om)$ is an algebra, we similarly obtain from \eqref{XX} that
$$
[u\mapsto \beta(u)]\in C^1\big(X,\ml(\Wpb^{2-2/p+\delta}(\Om))\big)\,,
$$
hence $(A_{11}')$.
Next note that for $\alpha\in [0,1]$, $u\in X$, and $w\in \Wpb^{2}(\Om)$ we have
 \bqnn
\begin{split}
A_\alpha(u,\cdot)w:&=(1-\alpha)A(0,\cdot)w+\alpha A(u,\cdot)w\\
&=-\mathrm{div}_x\big([(1-\alpha)d(0,\cdot,\cdot)+\alpha d(U,\cdot,\cdot)]\nabla_xw\big) +[(1-\alpha)\mu(0,\cdot,\cdot)+\alpha\mu(U,\cdot,\cdot)] w
\end{split}
\eqnn
with $$(1-\alpha)d(0,\cdot,\cdot)+\alpha d(U,\cdot,\cdot)\ge \underline{d}\ .$$ Hence, for $\alpha\in [0,1]$, $u\in X$, and $a\in J$,  the operator $-A_\alpha(u,a)$ is resolvent positive, generates a contraction semigroup on each $L_q(\Om)$, $1<q<\infty$ (see \cite{AmannIsrael}), and is self-adjoint in $L_2(\Om)$. Hence \cite[III.Example 4.7.3, III.Theorem 4.10.10]{LQPP} entail $(A_9')$. Since, for $u\in X$ fixed, the mapping $$A(u,\cdot):[0,a_m]\rightarrow \ml(\Wpb^2(\Om),L_p(\Om))$$ is H\"older continuous, there is a unique positive evolution operator 
$$
\Pi[u](a,\sigma)\,,\quad 0\le \sigma\le a\le a_m\,,
$$ 
on $E_0$ corresponding to $A(u,\cdot)$, see \cite[II.Corollary 4.4.2, II.Theorem 6.4.2]{LQPP}. In particular,  $(A_{10})$ holds.  Also note that the maximum principle ensures that $\Pi[0](a,0)\in\mathcal{K}(\Wpb^2(\Om))$ is strongly positive for each $a\in J\setminus\{0\}$, see \cite[Section 13]{DanersKochMedina}. Since $\beta(0,\cdot,\cdot)> 0$ we conclude  $(A_{12}')$. Consequently, we are in a position to apply Theorem~\ref{T_continua} and deduce that there is an unbounded continuum of positive solutions $(\eta,u)$ in $(0,\infty)\times\dot{\mathbb{W}}_p^+$ to~\mbox{\eqref{Aas}}.
\end{proof}

Note that Proposition~\ref{P43} holds for more general second-order elliptic operators (e.g. with drift term) and possibly subject to other spatial boundary conditions, e.g. see \cite{WalkerSIMA09}. Also note that in the example above, assumptions $(A_1)-(A_5)$ hold when freezing the nonlinearity $u$ in $A(u,\cdot)$, $\mu(u,\cdot,\cdot)$,  and $\beta(u,\cdot,\cdot)$. In particular, if $a_m<\infty$ and imposing that
\begin{align*}
& d\in C^1([0,a_m],C^1(\bar\Omega))\,, && d(a,x)>0\,,\quad (a,x)\in [0,a_m]\times \bar\Omega\,,\\
&\mu\in C^1([0,a_m],C(\bar\Omega))\,, && \mu(a,x)\ge 0\,,\quad (a,x)\in [0,a_m]\times \bar\Omega\,,\\
&\beta\in C([0,a_m],C^2(\bar\Omega))\,, && \beta(a,x)> 0\,,\quad (a,x)\in [0,a_m]\times \bar\Omega\,,
\end{align*}
then assumptions $(A_1)-(A_5)$ are satisfied in example \eqref{Eu1a} from the introduction.


\end{document}